\documentclass[11pt]{article}
\usepackage[utf8]{inputenc}
\usepackage[includeheadfoot,a4paper, centering,scale=0.8,headheight=15pt]{geometry}
\usepackage{fancyhdr}
\usepackage{lipsum}
\usepackage{lastpage}
\usepackage[dvipsnames,svgnames]{xcolor}
\usepackage[strict]{changepage} %
\usepackage{framed} %
\usepackage{graphicx}
\usepackage{amssymb}
\usepackage{latexsym}
\usepackage{mathtools}
\usepackage{multirow}
\usepackage{textcomp} 
\usepackage{amsthm}
\usepackage[all]{xy}
\usepackage{color}

\usepackage{blindtext}
\usepackage{textcomp}
\usepackage[alphabetic]{amsrefs}
\usepackage[colorlinks,linkcolor=red,anchorcolor=yellow,citecolor=blue]{hyperref}

\usepackage[T1]{fontenc}
\usepackage[theoremfont]{newpxtext}
\usepackage[type1]{sourcesanspro}
\usepackage[varg]{newpxmath}
\usepackage{bm}
\linespread{1.05}

\usepackage[scr=boondox,  
            cal=esstix]   
           {mathalpha}
\usepackage{verbatim}

\usepackage{titlesec}
\titleformat{\section}
  {\normalfont\fontsize{14}{16}\bfseries\scshape}{\thesection}{1em}{}
  \titleformat{\subsection}
  {\normalfont\fontsize{12}{14}\bfseries\scshape}{\thesubsection}{1em}{}
  \titleformat{\subsubsection}
  {\normalfont\fontsize{10}{12}\bfseries\scshape}{\thesubsubsection}{1em}{}

\pagestyle{fancy}
\fancyhf{} 

\fancyhead{}
\fancyhead[C]{\scshape\footnotesize \leftmark}
\fancyfoot{}
\fancyfoot[C]{\footnotesize 
 Page \thepage\ of \pageref{LastPage}}


\newtheoremstyle{break}
{\baselineskip} 
{\baselineskip}
  {} 
  {}
  {\bfseries}
  {}
  {\newline}
        {}
\theoremstyle{break}
\newtheorem{theo}{Theorem}[section]

\numberwithin{equation}{section}
\newtheorem{theorem}[theo]{Theorem}
\newtheorem{definition}[theo]{Definition}
\newtheorem{lemma}[theo]{Lemma}
\newtheorem{conjecture}[theo]{Conjecture}
\newtheorem{proposition}[theo]{Proposition}
\newtheorem*{proposition*}{Proposition}

\newtheorem{corollary}[theo]{Corollary}
\newtheorem{remark}[theo]{Remark}

\newcommand{\bijar}[1][]{%
    \ar[#1]
    \ar@<0.7ex>@{}[#1]|-*[@]{\sim}} 

\newcommand{\dlog}[1]{\text{dlog}(#1)}

\newcommand{\len}[1]{\text{len}( #1)}

\newcommand{\tame}{\text{tame}}
\newcommand{\et}{\text{\'et}}



\title{\bfseries \scshape Brauer $p$-dimension of henselian discretely valued fields over characteristic $p>0$}
\author{Yizhen Zhao}
\date{\today}

\begin{document}
\maketitle
\begin{abstract}
    We use Kato's Swan conductor to systematically investigate the Brauer $p$-dimension of henselian discretely valued fields of residual characteristic $p>0$. We transform the period-index problem of these fields into the symbol length problem for certain abelian groups relating to Kahler differentials of residue fields.
\end{abstract}
\tableofcontents
\section{Introduction}
Let $k$ be a field. For any $k$-central simple algebra $A$, we denote by $\textup{per}(A)$, the order of its class in the Brauer group $\textup{Br}(k)$ (called the period) and by $\textup{ind}(A)$ its index which is the gcd of all the degrees of the finite splitting fields. It is well-known that $\textup{per}(A)$ divides $\textup{ind}(A)$, and these two integers have the same prime factors. Hence the period is bounded by the index and the index is bounded above by a power of the period. It leads to the following definition. For a prime $p$ and $n\in\mathbb{N}$, denote by $\text{Br}(k)[{p^n}]$ the $p^n$-torsion part of the Brauer group $\text{Br}(k)$. Define the Brauer dimension at $p$ (or the Brauer $p$-dimension) as follows:
\[
\textup{Br.dim}_p(k):=\min\left\{d\in \mathbb{N}\cup\{\infty\}:\
      \text{ind}(A)\mid \text{per}(A)^d,\  {\text{for all}\  A\in \text{Br}(k)[{p^n}] \ \text{and} \ n\in \mathbb{N}} 
 \right.
 \}
    \]
Then define the Brauer dimension of $k$ to be 
\[\textup{Br.dim}(k)=\sup_p\left\{\textup{Br.dim}_p(k)\right\}.
\]
The period-index problem of a field $k$ is to investigate the Brauer dimension of $k$.

In this paper, we focus on the henselian discretely valued fields. Let $K$ be a henselian discretely valued field with residue field $F$ of characteristic $q$ and $p$ be a prime number. If $q\neq p$, it was proved by Harbater, Hartmann and Krashen in \cite{MR2545681} that $\text{Br.dim}_p(K)\leq d+1$ if $\text{Br.dim}_p(E)\leq d$ for all finite extension $E/F$. Hence, it is clear that the only remaining case is when $q=p$, i.e.~henselian discretely valued fields of residual characteristic $p>0$. However, for any $n\geq 0$, there are examples by Parimala and Suresh \cite{MR3219517} of complete discretely valued field $K$ with $\text{Br.dim}_p(K)\geq n$ and $\text{Br.dim}_p(F)=0$.

On the other hand, there are bounds for the Brauer $p$-dimension of $K$ in terms of the $p$-rank of $F$. If the $p$-rank of $F$ is $n<\infty$, i.e.~$[F:F^p]=p^n$, the Brauer $p$-dimension of $F$ is no more than $n$ \cite{MR4176776}*{Corollary 3.4}. Moreover, Chipchakov proved that $\text{Br.dim}_p(K)\geq n$ if $[F:F^p]=p^n$ and $\text{Br.dim}_p(K)$ is infinite if and only if $F/F^p$ is an infinite extension \cite{MR4394741}.
\begin{conjecture}[\cite{MR4082254}*{Conjecture 5.4} \cite{MR4394741}*{Conjecture 1.1}]
\label{Conjonbounds}
    Let $K$ be a henselian discretely valued field with residue field $F$ of characteristic $p>0$. Assume that $[F:F^p]=p^n$. Then 
    \[
    n\leq \text{Br.dim}_p(K)\leq n+1.
    \]
\end{conjecture}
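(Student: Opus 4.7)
The plan is to establish the upper bound $\text{Br.dim}_p(K) \leq n+1$, since the lower bound $n \leq \text{Br.dim}_p(K)$ is already provided by Chipchakov's result recalled above. Following the strategy outlined in the abstract, one introduces Kato's Swan-conductor filtration $\fil_m \text{Br}(K)[p]$ on the $p$-torsion Brauer group, identifies the associated graded quotients with subquotients of the K\"ahler differentials $\Omega^\bullet_F$ of the residue field, and translates the period-index question into a symbol-length problem in these differential groups.

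A standard devissage first reduces the task to classes of period $p$: if every $\alpha \in \text{Br}(K)[p]$ can be split by an extension of degree dividing $p^{n+1}$, then an inductive splitting argument applied to $p^{k-1}\alpha, p^{k-2}\alpha,\ldots$ for a class $\alpha$ of period $p^k$ produces a splitting field of degree dividing $p^{k(n+1)}$, which is what is needed. For $\alpha \in \text{Br}(K)[p]$, Kato's theory attaches a Swan conductor $\sw{\alpha} \in \mathbb{N}$; the piece $\fil_0 \text{Br}(K)[p]$ is \emph{tame} and is captured by $\text{Br}(F)[p]$ together with cyclic algebras ramified along a uniformizer $\pi$, while for $m \geq 1$ each graded quotient $\fil_m / \fil_{m-1}$ embeds into (a quotient of) $\Omega^1_F \oplus \Omega^2_F$.

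Next one exploits the hypothesis $[F:F^p]=p^n$, which forces $\dim_F \Omega^1_F = n$: any element in the relevant subquotient of $\Omega^\bullet_F$ decomposes as a sum of elementary logarithmic symbols $x\,\dlog{y_1} \wedge \cdots \wedge \dlog{y_{r-1}}$, each of which lifts to a degree-$p$ cyclic algebra over $K$. An induction on $\sw{\alpha}$ should peel such symbols off so that the residual class strictly drops in filtration, and the tame layer itself admits a decomposition of length at most $n$ using the known bound $\text{Br.dim}_p(F) \leq n$ together with one extra symbol for the ramified direction $\dlog{\pi}$; taken together this suggests that $n+1$ cyclic $p$-algebras suffice to realize $\alpha$, so $\text{ind}(\alpha) \mid p^{n+1}$.

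The principal obstacle is to make the \emph{total} symbol length, summed across all filtration steps, be at most $n+1$ rather than growing with $\sw{\alpha}$. Naively each level can contribute several symbols, and since $\sw{\alpha}$ is unbounded this could blow up. One must therefore choose logarithmic representatives $\sum x_i \dlog{y_i}$ at each graded quotient whose cyclic-algebra lifts genuinely cancel the class modulo $\fil_{m-1}$, and that can moreover be amalgamated across levels so that the final expression for $\alpha$ uses only $n+1$ symbols. This demands a precise compatibility between the Artin--Schreier--Witt description of $\text{Br}(K)[p]$ and Kato's refined residue maps, and is the technical heart of the conjecture -- it is precisely the reason the statement remains open, and also the reason one should expect the methods of the present paper (which focuses on the symbol-length problem in the Kato differentials) to make headway toward it.
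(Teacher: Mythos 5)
You are attempting to prove something that is stated as a conjecture, and the paper does not prove it either: its actual result (Corollary \ref{implication}) is that Conjecture \ref{Conjonbounds} \emph{follows from} the symbol-length Conjecture \ref{conjsymbollength}, plus unconditional special cases ($p\nmid\mathrm{sw}(\alpha)$ in Theorems \ref{equalpnmid} and \ref{mixedpnmid}, and $p=2$, $[F:F^p]=p^2$ via Lemma \ref{p2n2} and Corollary \ref{corollaryk2}). Your outline correctly reproduces the paper's reduction steps --- d\'evissage to period $p$ via Lemma \ref{takao}, the tame case via the residue sequence and $\mathrm{Br.dim}_p(F)\leq n$ as in Lemma \ref{tame}, and the translation of the wild part into questions about the graded pieces of Kato's filtration --- but it stops exactly where the real work begins, and you say so yourself. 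One factual correction: for the Brauer group ($q=2$) the graded quotients are $\Omega^1_F$ when $p\nmid j$, $\Omega^1_F/Z^1_F\oplus F/F^p$ when $p\mid j$ (and $K_2^M(F)/p\oplus K_1^M(F)/p$ at the top level $j=N$ in mixed characteristic); they are not subquotients of $\Omega^1_F\oplus\Omega^2_F$.

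The concrete gap is the one you flag as ``the principal obstacle,'' and the paper's resolution of it is structurally different from what you propose. You suggest peeling off symbols level by level and then trying to amalgamate the contributions, which, as you note, gives a bound growing with $\mathrm{sw}(\alpha)$. The paper instead runs a downward induction on the filtration index $j$ in which the \emph{number} of symbols is held fixed at $n$ (a $p$-basis worth of terms $\frac{a_i}{\pi^k}\dlog{x_i}$, or $n-1$ such terms plus one involving $\dlog{\pi}$): at each step the correction term in $M_j/M_{j-1}$ is absorbed not by adding new symbols but by re-choosing the \emph{liftings} $x_i$ of the fixed $p$-basis, the coefficients $a_i$, and the uniformizer $\pi$, using Lemmas \ref{swift} and \ref{swiftmix} to verify that replacing $x_1$ by $x_1(1+c\pi^{k-j})$ (resp.\ perturbing $b$ and $\pi$) changes the class only modulo $M_{j-1}$. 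That mechanism is entirely absent from your proposal. Moreover, even with it, the cases $p\mid\mathrm{sw}(\alpha)$ and $\mathrm{sw}(\alpha)=N$ require that the leading term itself be expressible with at most $n-1$ symbols in $\Omega^1_F/Z^1_F$ (resp.\ $K_2^M(F)/p$), which is precisely Conjectures \ref{prankconj1} and \ref{prankconj2} and is open beyond small $p$-rank. So your argument cannot be completed into a proof of Conjecture \ref{Conjonbounds}; at best it can be sharpened into the conditional statement of Corollary \ref{implication}.
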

To illustrate the difference of the residual characteristic $p>0$ case from the remaining cases, we need to define tame extensions of $K$ \cite{MR4411477}. A finite field extension $L$ of $K$ is called {\it tame} \cite{MR4411477} if the residue field extension is separable and the ramification degree is invertible in the residue field $F$. Let $\alpha\in \text{Br}(K)[p]$. Then $\alpha$ is split by a finite separable extension $L/K$ of degree $p^m,m\in \mathbb{N}$. If $\text{char}(F)=q\neq p$, then it is clear that $L$ is a tame extension of $K$. When $q=p$, however, it is not true that every $p$-torsion Brauer class over $K$ is split by a tame extension. In fact, tamely ramified $p$-torsion Brauer classes form a subgroup of $\text{Br}(K)[p]$. If a Brauer class over $K$ is split by a tame extension of $K$, we call it {\it tamely ramified}. Otherwise, we will call it {\it wildly ramified}.

The key ingredients of this paper are Kato's filtration on $\text{Br}(K)[p]$ and Kato's Swan conductor. Kato uses the unit group filtration on a henselian discretely valued field $K$ to define an increasing filtration $\{M_i\}_{i\geq 0}$ on $\text{Br}(K)[p]$. Let $\alpha\in \text{Br}(K)[p]$. We can define Kato's Swan conductor $\text{sw}(\alpha)$ of $\alpha$ to be the minimal integer $n$ such that $\alpha\in M_n$. We review these notions below in Section \ref{katoswan}. Kato's filtration gives a natural way to describe the ramification of a Brauer class. The group $M_0$ is just the tamely ramified subgroup defined in the last paragraph, and $\alpha\in M_n$ is wildly ramified if $n\geq 1$. 

In this paper, we will show Conjecture \ref{Conjonbounds} follows from symbol length bounds on the group $\Omega^1_F/Z^1_F$ and $K_2^M(F)/pK_2^M(F)$, where $\Omega^1_F$ is the group of absolute K\"ahler differential forms, $Z^1_F$ is the subgroup of closed forms in $\Omega^1_F$ and $K_2^M(F)$ is the second Milnor $K$-group of $F$. The symbol length problem for is naturally well-defined for each of these two groups. We will give a precise definition in Section \ref{secofsymbol}. We state the following conjectural symbol length bound and the main theorems.
\begin{conjecture}
    Let $F$ be a field of characteristic $p>0$ and $[F:F^p]=p^n,n\in \mathbb{N}$. Assume one of the following:
    \begin{itemize}
        \item[(\romannumeral 1)] $F$ does not admit any finite extension of degree prime to $p$;
        \item[(\romannumeral 2)] $p=2$.
    \end{itemize}
    Then the symbol length of each of the groups $\Omega^1_F/Z^1_F$ and $K_2^M(F)/p$ is no more than $n-1$.
    \label{conjsymbollength}
\end{conjecture}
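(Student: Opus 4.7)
The plan is to prove both bounds by induction on the $p$-rank $n$, exploiting algebraic identities in characteristic $p$ that collapse pairs of logarithmic symbols modulo exact forms. The base case $n=1$ is immediate: $\Omega^2_F=0$ forces every 1-form to be closed, so $\Omega^1_F/Z^1_F=0$; and the Bloch--Kato--Gabber injection $K_2^M(F)/p\hookrightarrow \Omega^2_F=0$ yields $K_2^M(F)/p=0$. In both groups the symbol length is $0=n-1$.

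For the inductive step, fix a $p$-basis $b_1,\ldots,b_n$ of $F$ and set $F_0=F^p(b_1,\ldots,b_{n-1})$, over which $F$ is purely inseparable of degree $p$ with $[F_0:F_0^p]=p^{n-1}$. Any $\omega\in \Omega^1_F$ has an expansion $\omega=\sum_{i=1}^n x_i\, db_i$, yielding the naive bound $n$ on symbol length via $x_i\,db_i=(x_ib_i)\cdot db_i/b_i$. To shave off one symbol I would exploit the characteristic-$p$ identity
\[
a\, da+b\, db\;=\;(a+b)\,d(a+b)-d(ab),
\]
which shows that two ``diagonal'' logarithmic symbols fuse into one modulo an exact form. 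The strategy is to modify the expansion of $\omega$ by adding exact (hence closed) forms until two of the $x_i\,db_i$ components can be matched to the left-hand side of this identity, invoking the inductive hypothesis applied to $F_0$ to handle the first $n-1$ components with $n-2$ symbols and the fusion identity to absorb the $n$-th. The same template applies to $K_2^M(F)/p$ via the differential symbol and the Steinberg relation, descending along the purely inseparable tower $F/F_0$ using Izhboldin's description of $K_2^M$ of a degree-$p$ purely inseparable extension.

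The main obstacles are twofold. First, the hypotheses must survive the descent to $F_0$: case~(ii) ($p=2$) is preserved trivially, while case~(i) requires verifying that $F_0$ inherits the absence of prime-to-$p$ extensions, which follows from the purely inseparable nature of $F/F_0$. Second, and more seriously, the closed-form corrections that execute the fusion may propagate unwanted components across the $p$-basis decomposition; controlling this combinatorial bookkeeping --- and in particular, solving the auxiliary Artin--Schreier equations (in case~(i)) or quadratic equations (in case~(ii)) needed to realize the required $a,b$ in the identity above --- is the technical heart of the argument, and is where the special hypotheses are indispensable. I expect this latter step to be the dominant difficulty, and to explain both why the conjecture splits into two disjoint hypotheses and why it has remained open despite its clean formulation.
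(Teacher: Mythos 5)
This statement is a \emph{conjecture} in the paper; the author does not prove it, and only establishes the base case $n=1$ (both groups vanish), the case $(p,n)=(2,2)$ for $\Omega^1_F/Z^1_F$ (Lemma \ref{p2n2}, by explicitly solving a quadratic system in the $p$-basis coordinates), and quotes Bhaskhar--Haase for $K_2^M(F)/p$ at $n=2,3$ --- where for $n=3$ the known bound is $3$, which does not even reach the conjectured $n-1=2$. So your proposal must stand on its own, and it has two fatal gaps. First, the induction never gets off the ground: for $F_0=F^p(b_1,\dots,b_{n-1})$ the extension $F/F_0$ is finite (of degree $p$), and the $p$-rank is invariant under finite extensions (Frobenius gives $[L:K^p]=[L:K][K:K^p]=[L:L^p][L^p:K^p]$ with $[L^p:K^p]=[L:K]$, hence $[K:K^p]=[L:L^p]$; this is the same fact underlying Lemma \ref{inse}). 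Therefore $[F_0:F_0^p]=p^n$, not $p^{n-1}$, and the inductive hypothesis does not apply to $F_0$. No finite extension or finite-index subfield can lower the $p$-rank, which is precisely why the conjecture is hard.

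Second, your fusion identity $a\,da+b\,db=(a+b)\,d(a+b)-d(ab)$ is an identity among elements of $Z^1_F$ and says nothing in the quotient $\Omega^1_F/Z^1_F$: every form $c\,dc$ is closed since $d(c\,dc)=dc\wedge dc=0$ (and for $p$ odd it is even exact, $c\,dc=d(c^2/2)$). A general symbol $x\,db$ with $x$ and $b$ unrelated cannot be rewritten as $c\,dc$ modulo closed forms, so the identity cannot fuse two components of $\sum_i x_i\,db_i$. The genuine difficulty --- visible already in the paper's proof of Lemma \ref{p2n2} --- is that merging two symbols amounts to solving a nonlinear polynomial system in the coordinates of the coefficients with respect to the $p$-basis, and it is exactly there that the hypotheses (i) or (ii) are needed (to guarantee solvability of the resulting degree-$(p-1)$ or Artin--Schreier-type equations). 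Your sketch correctly identifies this as ``the technical heart'' but supplies no mechanism for it; the only parts of the proposal that do work are the base case $n=1$ and the observation that hypothesis (i) descends along purely inseparable extensions.
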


\begin{theorem}[Theorem \ref{equalpnmid},\ Theorem \ref{mixedpnmid}]
     Let $F$ be a field of characteristic $p>0$ and $[F:F^p]=p^n,\ n\in\mathbb{N}_{>0}$. Let $K$ be a henselian discretely valued field with the residue field $F$. Suppose that $\alpha\in \text{Br}(K)[p]$ and $p\nmid\text{sw}(\alpha)>0$. Then $\text{ind}(\alpha)$ divides $ \text{per}(\alpha)^n$.
     \label{evidence}
\end{theorem}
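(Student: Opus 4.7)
The plan is to use Kato's filtration $\{M_i\}_{i \ge 0}$ on $\text{Br}(K)[p]$ together with the refined Swan conductor to reduce the index bound for $\alpha$ to a symbol-length problem for K\"ahler differentials over $F$. Writing $i = \sw{\alpha} > 0$ with $p \nmid i$, Kato's structural theorem recalled in Section \ref{katoswan} yields an injective refined Swan conductor map
\[
\text{rsw}_i : M_i/M_{i-1} \hookrightarrow \Omega^1_F,
\]
under which a ``standard'' cyclic algebra symbol $[u\pi^{-i}, v)_p$ (Artin--Schreier in the equal-characteristic case; Kummer in the mixed-characteristic case with $\mu_p \subset K$) over $K$ corresponds, up to the unit $\bar u$, to the logarithmic form $d\bar v/\bar v \in \Omega^1_F$. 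Consequently $\omega := \text{rsw}_i(\alpha)$ encodes the top-level wild ramification of $\alpha$, and any expression of $\omega$ as a sum of $r$ such logarithmic symbols lifts to a decomposition of $\alpha$, modulo $M_{i-1}$, as a sum of $r$ cyclic algebras of degree $p$.

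The next step is to bound the number of symbols needed. I would fix a $p$-basis $t_1, \dots, t_n$ of $F$, so that $\Omega^1_F = \bigoplus_{j=1}^n F\, dt_j$, and show that the image $\omega$ of $\text{rsw}_i$ can be rewritten as a sum of at most $n-1$ logarithmic symbols $a_j'\, db_j'/b_j'$, in the spirit of the bounds discussed in Section \ref{secofsymbol}. This step ought not require the full strength of Conjecture \ref{conjsymbollength}, since one only manipulates elements arising as refined Swan conductors of actual Brauer classes, which carry extra structural rigidity.

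Each such logarithmic symbol lifts to a cyclic algebra of degree $p$ over $K$; taking the compositum of the corresponding splitting extensions produces an extension $L/K$ of degree at most $p^{n-1}$ with $\alpha_L \in M_{i-1}$, hence $\sw{\alpha_L} < i$. Iterating this reduction through successively smaller Swan conductors, and arguing carefully that the total contribution of degree-$p$ extensions amounts to only one additional factor of $p$ overall (handling both further $p \nmid \sw{\cdot}$ strata and the eventual tame residue uniformly via the same $p$-basis of $F$), produces a global splitting extension of degree dividing $p^n$. This yields $\text{ind}(\alpha) \mid p^n = \text{per}(\alpha)^n$.

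The main obstacle is the second step: obtaining an $(n-1)$-term representation of $\omega$ without assuming the full symbol-length conjecture. The argument will likely proceed by analyzing $\text{rsw}_i$ of a sum of standard symbols $[u_k \pi^{-i}, v_k)_p$ and combining several such symbols, exploiting that the hypothesis $p \nmid i$ makes the refined invariant a ``clean'' differential form free of closed-form obstructions. A secondary complication is the separate handling of the equal-characteristic and mixed-characteristic cases, which accounts for the two theorem references and demands parallel but slightly different explicit computations of $\text{rsw}_i$ and of the lifts of logarithmic symbols to cyclic algebras over $K$.
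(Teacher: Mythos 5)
Your first step matches the paper's starting point: for $k=\text{sw}(\alpha)$ prime to $p$, the isomorphism $M_k/M_{k-1}\cong\Omega^1_F$ lets one write $\alpha$ modulo $M_{k-1}$ as a sum of symbols indexed by a $p$-basis. But there are two genuine gaps after that. First, your claim that the leading form $\omega\in\Omega^1_F$ can be rewritten with at most $n-1$ logarithmic terms is false in general: every class in $M_k/M_{k-1}\cong\Omega^1_F$ occurs as a refined Swan conductor, and a generic $1$-form over a field with $[F:F^p]=p^n$ genuinely needs $n$ terms of the shape $a\,db$; no ``structural rigidity'' is available here, and none is needed --- the paper simply uses $n$ symbols $\frac{a_i}{\pi^k}\text{dlog}(x_i)$ from the outset and aims at $p^n$, not $p^{n-1}$, at this stage.

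Second, and more seriously, your reduction strategy --- split the leading term by a compositum $L/K$ of degree $p^{n-1}$, drop to $\text{sw}(\alpha_L)<k$, and iterate --- is not how the paper proceeds, and the step you flag as needing ``careful arguing'' (that all lower strata together cost only one extra factor of $p$) is exactly where the whole difficulty lies: a naive iteration multiplies degrees at every stratum, and the strata with $p\mid j$ have graded pieces $\Omega^1_F/Z^1_F\oplus F/F^p$, so your scheme would there run into precisely the symbol-length problem that this theorem is supposed to avoid. The paper never passes to an extension. It performs a downward induction on $j<k$ over the fixed field $K$, showing that the error $\alpha-\bigl[\sum_i \frac{a_i}{\pi^k}\text{dlog}(x_i)\bigr]\in M_j$ can be absorbed into the \emph{same} $n$ symbols by re-choosing the liftings $x_i\mapsto x_i(1+c\pi^{k-j})$, the coefficients, and the uniformizer, via Lemma \ref{swift} (resp.\ Lemma \ref{swiftmix} in mixed characteristic). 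The hypothesis $p\nmid k$ is used precisely to hit the $F/F^p$-component at levels $p\mid j$ through the term $\overline{ka_1c}$ (resp.\ $\overline{(N-k)a_1c}$), which would vanish if $p\mid k$. At the end $\alpha$ is \emph{exactly} a sum of $n$ degree-$p$ cyclic symbols over $K$, whence $\text{ind}(\alpha)\mid p^n$. Without this absorption mechanism your outline does not close.
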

\begin{theorem}[Smaller Bounds for Wildly Ramified Brauer Classes]
    Let $K$ be a henselian discretely valued field with residue field $F$ of characteristic $p>0$ and $[F:F^p]=p^n,n\in \mathbb{N}_{>0}$. Let $\alpha\in\text{Br}(K)[p]$ and $\text{sw}(\alpha)>0$. Then Conjecture \ref{conjsymbollength} implies $\text{ind}(\alpha)\mid \text{per}(\alpha)^n$.
\end{theorem}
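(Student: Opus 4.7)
The plan is to reduce to Theorem \ref{evidence} by using the conjectural symbol length bound to handle wild Brauer classes whose Swan conductor is divisible by $p$. If $p\nmid\text{sw}(\alpha)$, Theorem \ref{evidence} directly yields $\text{ind}(\alpha)\mid\text{per}(\alpha)^n$, so I may assume $s:=\text{sw}(\alpha)>0$ is divisible by $p$.

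For this case I would invoke Kato's identification of the graded piece $M_s/M_{s-1}$ of the Brauer filtration with (a quotient of) $K_2^M(F)/pK_2^M(F)$, realized by sending Milnor symbols $\{u,v\}$ to wild Artin--Schreier cyclic algebras over $K$ (cf.~Section \ref{katoswan}). By Conjecture \ref{conjsymbollength}, the image of $\alpha$ in $K_2^M(F)/p$ is a sum of at most $n-1$ elementary symbols, which lift to wild cyclic algebras $\gamma_1,\dots,\gamma_{n-1}\in M_s\cap\text{Br}(K)[p]$ of period dividing $p$. Setting $\beta:=\alpha-\sum_i\gamma_i$, we have $\beta\in M_{s-1}$. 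Let $L/K$ be the compositum of the Artin--Schreier splitting fields of the $\gamma_i$, so $[L:K]\le p^{n-1}$ and $\alpha_L=\beta_L$. The target is then to prove $\text{ind}(\beta_L)\mid p$, which gives
\[
\text{ind}(\alpha)\mid [L:K]\cdot\text{ind}(\alpha_L)\mid p^{n-1}\cdot p=p^n.
\]
To establish $\text{ind}(\beta_L)\mid p$ one iterates the reduction at each successive Swan-conductor level (using the conjecture when $p\mid s'$ and Theorem \ref{evidence} when $p\nmid s'$) until $\beta$ becomes either tame or captured by a single further cyclic Artin--Schreier algebra over $L$.

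The principal obstacle is controlling the cumulative count of cyclic algebras across these reductions, so that $\alpha$ is expressed as a sum of exactly $n$ cyclic algebras of degree $p$ rather than $(n-1)$ per Swan-conductor level (which would give an unbounded count). This likely requires careful bookkeeping on how the Kato filtration and the $p$-rank behave under each intermediate Artin--Schreier extension, or a more refined choice of lifts $\gamma_i$ arranged so that the subtraction simultaneously kills multiple graded pieces of the filtration. An auxiliary technicality is preserving the hypotheses of Conjecture \ref{conjsymbollength} along the tower of residue fields, which can be arranged by first passing to the maximal prime-to-$p$ extension of $K$ (harmless for the $p$-primary period and index).
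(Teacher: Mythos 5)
Your overall strategy---reduce to the leading graded piece, use the conjectural symbol length bound there, and then descend---points in the right direction, but there are two problems, one factual and one structural. First, for $p\mid\text{sw}(\alpha)=s$ with $0<s<N$ (and for every $p\mid s>0$ in equal characteristic), the graded piece $M_s/M_{s-1}$ is \emph{not} a quotient of $K_2^M(F)/p$: by Theorem \ref{equalquotient} and its mixed-characteristic analogue it is $\Omega^1_F/Z^1_F\oplus F/F^p$. Only the top piece $M_N/M_{N-1}$ in mixed characteristic involves $K_2^M(F)/p$ (Theorem \ref{mixedtop}). So the relevant half of Conjecture \ref{conjsymbollength} for most wild classes is the bound on $\Omega^1_F/Z^1_F$, and your decomposition also ignores the second summand $F/F^p$, which corresponds to symbols involving the uniformizer and has to be absorbed separately (in the paper this is done by perturbing $\pi$ and the coefficient $b$ of the $\dlog{\pi}$ term).

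Second, and more seriously, the step you flag as ``the principal obstacle'' is the entire content of the theorem, and your proposal does not supply it. Subtracting $n-1$ lifted symbols only lands you in $M_{s-1}$, and iterating that subtraction level by level accumulates an unbounded number of symbols, as you yourself note; passing to the compositum $L$ of the splitting fields does not rescue this, because the Swan conductor and the Kato filtration of $\beta_L$ over the wildly ramified extension $L$ are not controlled by $s-1$, so the claim $\text{ind}(\beta_L)\mid p$ is unsupported. The paper's resolution (Theorems \ref{equalpnmid}, \ref{mixedpnmid} and Theorems \ref{equalpmid}, \ref{mixedpmid}, \ref{mixedtop}) is a downward induction in which the number of symbols stays exactly $n$ at every filtration level: rather than subtracting new classes, one re-chooses the liftings $x_i\mapsto x_i(1+c\pi^{k-j})$, the coefficients, and the uniformizer so that the \emph{same} $n$ symbols match $\alpha$ modulo successively deeper $M_{j-1}$, the key computational inputs being Lemmas \ref{swift} and \ref{swiftmix}. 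Descending all the way exhibits $\alpha$ itself as a sum of $n$ degree-$p$ cyclic algebras, whence $\text{ind}(\alpha)\mid p^n$; no splitting field is taken until the end. Without that mechanism your argument does not close.
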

Theorem \ref{evidence} is the source of the following implication that the wildly ramified Brauer classes should have a smaller symbol length bound.
\begin{corollary}
\label{implication}
Let $K$ be a henselian discretely valued field with residue field $F$ of characteristic $p>0$. 
    The period-index Conjecture \ref{Conjonbounds} for $K$ is implied by the symbol length Conjecture \ref{conjsymbollength} for the field $F$.
\end{corollary}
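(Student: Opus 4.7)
The lower bound $n \leq \text{Br.dim}_p(K)$ is already due to Chipchakov \cite{MR4394741}, so it suffices to prove the upper bound $\text{Br.dim}_p(K) \leq n+1$ under Conjecture \ref{conjsymbollength}. My plan has two parts: a d\'evissage reducing a bound on $\text{Br}(K)[p^m]$ to a bound on $\text{Br}(K)[p]$, followed by a split of $\text{Br}(K)[p]$ into its tame and wild subgroups via Kato's filtration $\{M_i\}_{i\geq 0}$.

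For the d\'evissage, I would argue by induction on $m$. Given $\alpha \in \text{Br}(K)[p^m]$, the class $p\alpha$ lies in $\text{Br}(K)[p^{m-1}]$, so by the inductive hypothesis it is split by a finite separable extension $L/K$ with $[L:K]$ dividing $p^{(m-1)(n+1)}$. The field $L$ is again henselian discretely valued, and its residue field still has $p$-rank $n$ (the preservation of the $p$-rank under finite separable extensions in characteristic $p$ being the first technical point that must be verified carefully). The restriction $\alpha_L \in \text{Br}(L)[p]$ then satisfies $\text{ind}(\alpha_L) \mid p^{n+1}$ by the $p$-torsion case, so $\text{ind}(\alpha) \mid [L:K]\cdot \text{ind}(\alpha_L) \mid p^{m(n+1)} = \text{per}(\alpha)^{n+1}$.

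For $\alpha \in \text{Br}(K)[p]$, if $\text{sw}(\alpha)>0$ the Smaller Bounds theorem, whose hypothesis is exactly Conjecture \ref{conjsymbollength}, already yields the stronger bound $\text{ind}(\alpha) \mid p^n$. If instead $\alpha \in M_0$ is tame, I would decompose $\alpha$ via Kato's description of $M_0$ as generated by unramified Brauer classes pulled back from $F$ together with cyclic symbols built from a uniformizer; the unramified part has index at most $p^n$ since $\text{Br.dim}_p(F) \leq n$ by \cite{MR4176776}*{Corollary 3.4}, and the cyclic part contributes a single additional factor of $p$, giving $\text{ind}(\alpha) \mid p^{n+1}$ through the standard subadditivity $\text{ind}(\alpha+\beta) \mid \text{ind}(\alpha)\cdot\text{ind}(\beta)$. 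The main obstacle is this tame case: one has to make the decomposition explicit and control the resulting index, which amounts to a residual characteristic $p$ analogue of the Harbater--Hartmann--Krashen argument carried out through Kato's filtration rather than via a classical tame symbol. A secondary subtlety, as noted above, is to confirm that the separable splitting field produced by the d\'evissage does not decrease the $p$-rank of the residue field.
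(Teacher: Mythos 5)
Your proposal follows essentially the same route as the paper: you reduce to the $p$-torsion case (your d\'evissage is precisely the standard proof of the Yamazaki/Matzri lemma that the paper cites as Lemma \ref{takao}, combined with the invariance of the residual $p$-rank, Lemma \ref{inse}), you handle the tame subgroup $M_0$ via the split residue sequence exactly as in Lemma \ref{tame}, and you invoke the wildly ramified theorems when $\text{sw}(\alpha)>0$. The only ingredient the paper makes explicit that you elide is the prime-to-$p$ reduction (Lemma \ref{primetop}) used to place the residue field under hypothesis (\romannumeral 1) of Conjecture \ref{conjsymbollength}.
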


Next we make a few remarks. Kato proved Corollary \ref{implication} when $K$ is complete and $[F:F^p]=p$. In this case, both groups are trivial. Therefore, Corollary \ref{implication} is a generalization of Kato's results \cite{MR550688}*{Section 4, Lemma 5}. For the symbol length results, Bhaskhar and Haase proved the symbol length of $K_2^M(F)/p$ is $1$ when $[F:F^p]=p^2$, and no more than $3$ when $[F:F^p]=p^3$. We will prove the symbol length of the group $\Omega^1_F/Z^1_F$ is $1$ (Lemma \ref{p2n2}) when $p=2$ and $[F:F^p]=p^2$. 

 Combining these known symbol length results, we have the following concrete theorem.
\begin{theorem}[Theorem \ref{equalpmid}, Theorem \ref{mixedpmid}, Theorem \ref{mixedtop}]
    Let $F$ be a field of characteristic $p=2$ and $[F:F^p]=p^2$. Let $K$ be a henselian discretely valued field with the residue field $F$. Let $\alpha\in \text{Br}(K)[p]$ and $p\mid\text{sw}(\alpha)>0$. Then $\text{ind}(\alpha)\mid \text{per}(\alpha)^2$.
\end{theorem}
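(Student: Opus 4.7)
My plan is to combine Kato's filtration machinery with the single-symbol representability afforded by Lemma \ref{p2n2} and by the Bhaskhar--Haase bound on $K_2^M(F)/p$, constructing in each of the three sub-cases an explicit degree-$p$ cyclic extension $L/K$ which absorbs the leading wild residue of $\alpha$ and reduces the $p$-rank of the residue field of $L$ to $1$. The case $p \nmid \text{sw}(\alpha) > 0$ is already Theorem \ref{evidence}; here we address the complementary case $p \mid \text{sw}(\alpha) > 0$, in which the relevant graded piece $M_n/M_{n-1}$ (with $n = \text{sw}(\alpha)$) of Kato's filtration is built from $\Omega^1_F/Z^1_F$ --- or, in one mixed-characteristic boundary situation, from a quotient of $K_2^M(F)/p$ --- rather than from $\Omega^1_F$.

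The first step is to invoke the structure theorems for Kato's graded pieces established in Section \ref{katoswan}, together with the symbol-length bounds available to us: under $p = 2$ and $[F:F^p] = p^2$, both $\Omega^1_F/Z^1_F$ (Lemma \ref{p2n2}) and $K_2^M(F)/p$ (Bhaskhar--Haase) have symbol length $1$. Thus the class of $\alpha$ in $M_n/M_{n-1}$ is representable by a \emph{single} symbol. From a lift of this symbol I would build a degree-$p$ cyclic extension $L/K$ --- an Artin--Schreier extension in the equal characteristic case (Theorem \ref{equalpmid}), a Kummer extension in the standard mixed characteristic case (Theorem \ref{mixedpmid}), and a modified Kummer extension in the top mixed-characteristic range (Theorem \ref{mixedtop}) --- designed so that the leading symbol vanishes upon base change to $L$ while the residue field $L^{\text{res}}$ has $p$-rank $1$.

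To close out, one must verify that after this single descent $\alpha$ is split by a field of total degree at most $p^2 = \text{per}(\alpha)^2$ over $K$. A naive estimate combining $[L:K] = p$ with Kato's $n=1$ period-index bound $\text{Br.dim}_p(L) \leq 2$ only yields $\text{ind}(\alpha) \mid p^3$, so the extension $L$ must be constructed delicately enough that $\alpha \otimes_K L$ is in fact \emph{unramified} rather than merely tame, placing it in the image of $\text{Br}(L^{\text{res}})[p]$, where the bound $\text{Br.dim}_p(L^{\text{res}}) \leq 1$ forces $\text{ind}(\alpha \otimes_K L) \mid p$.

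I expect the principal obstacle to be Theorem \ref{mixedtop}. At the critical Swan conductor in mixed characteristic (governed by the absolute ramification index $e$ and the presence of primitive $p$-th roots of unity in $K$), Kato's graded piece is not a quotient of $\Omega^1_F/Z^1_F$ but of $K_2^M(F)/p$, and a naively chosen Kummer generator may fail both to drop the Swan conductor and to trivialize the residual ramified character of $\alpha \otimes L$. Synchronizing the Kummer generator with a fixed uniformizer and root of unity so that the Bhaskhar--Haase single symbol is annihilated in a way that simultaneously unramifies the residual class is, I expect, where the technical heart of the argument resides.
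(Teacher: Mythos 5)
There is a genuine gap, and it sits at the load-bearing step of your argument. You propose to construct a degree-$p$ extension $L/K$ whose residue field $L^{\text{res}}$ has $p$-rank $1$, so that $\textup{Br.dim}_p(L^{\text{res}})\leq 1$ forces $\text{ind}(\alpha\otimes_K L)\mid p$. But the $p$-rank of the residue field cannot drop under a finite extension: this is exactly Lemma \ref{inse} of the paper (and is the standard fact that $[E:E^p]=[F:F^p]$ for any finite extension $E/F$ of fields of characteristic $p$, combined with the finiteness of the residue extension of $L/K$). So $[L^{\text{res}}:(L^{\text{res}})^p]=p^2$ no matter how delicately you choose $L$, the only general bound available is $\textup{Br.dim}_p(L^{\text{res}})\leq 2$, and you are left with the estimate $\text{ind}(\alpha)\mid p^3$ that you yourself flag as insufficient. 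A secondary problem is that even ``unramifying'' $\alpha$ by a single degree-$p$ extension is not justified: when $p\mid\text{sw}(\alpha)=k>0$ the graded piece is $\Omega^1_F/Z^1_F\oplus F/F^p$, so the leading term of $\alpha$ generically involves \emph{two} symbols (one from each summand, the second being the $\dlog{\pi}$-type term), plus contributions from all lower filtration levels; killing one symbol of the top graded piece by an Artin--Schreier or Kummer extension does not make the restriction unramified.

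The paper's proof goes a different way and never descends to a splitting field at all. It runs a downward induction on the filtration index $j$: starting from $\alpha\equiv\frac{a_1}{\pi^k}\dlog{x_1}+\frac{b}{\pi^k}\dlog{\pi}\ (\text{mod}\ M_{k-1})$ (here the symbol-length-one statements, Lemma \ref{p2n2} and Corollary \ref{corollaryk2}, are used only to get a single term from the $\Omega^1_F/Z^1_F$, resp.\ $K_2^M(F)/p$, summand of the top graded piece), it absorbs each successive graded contribution into the \emph{same} two symbols by re-choosing the liftings $x_i$, the coefficient $b$, and the uniformizer $\pi$, using Lemma \ref{swift} (resp.\ Lemma \ref{swiftmix}) to control how perturbations $x_1\mapsto x_1(1+c\pi^{k-j})$ and $\pi\mapsto\pi(1+c\pi^{k-j})$ move the class inside the filtration. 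At the end $\alpha$ is written over $K$ itself as a sum of exactly two degree-$p$ symbols, whence $\text{ind}(\alpha)\mid p^2$. If you want to salvage a descent-style argument, you would need a different mechanism than dropping the residue $p$-rank; as written, the proposal cannot be repaired without replacing its central idea.
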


Notice that both symbol length results are used in the mixed characteristic case. In contrast to the mixed characteristic case, the equal characteristic case requires only the symbol length result for the group $\Omega^1_F/Z^1_F$.

\vspace{2em}
\textbf{Notation.} If $K$ is a field, $p$ is a prime number and $n\in \mathbb{N}$, then $\text{Br}(K)[{p^n}]$ denotes the $p^n$-torsion part of the Brauer group $\text{Br}(K)$ of $K$.

We will mainly follow the notation introduced in \cite{MR991978}. Let $n=p^s m$ with $p\nmid m$. For any scheme $S$ smooth over a field of characteristic $p>0$, the object $(\mathbb{Z}/n)(r)$ of the bounded derived category $D^b(S_\et)$ is defined by
\[
(\mathbb{Z}/n)(r):=\mu_m^{\otimes r}\oplus W_s\Omega^r_{S,\text{log}}[-r].
\]
Here $W_s\Omega^\bullet_{S,\log}$ is the logarithmic de Rham-Witt complex of Illusie \cite{MR565469}. We further use Kato's notation. Let $K$ be a field. We define 
\[
H^q_n(K):=H^q_\et(K,(\mathbb{Z}/n)(q-1)),\ H^q(K):=\varinjlim\limits_n H^q_n(K).\]
For example, we have $H^2(K)=\text{Br}(K)$.

\vspace{2em}
\textbf{Acknowledgements.} This paper represents a part of my thesis work. I am indebted to my advisor, Rajesh Kulkarni, for many discussions and suggestions. I would also like to thank Professor Chipchakov for helpful conversations. During the preparation of the project, I was partially supported by the NSF grant DMS-2101761.
\section{Kato's filtration and Swan conductor}
\label{katoswan}
Let $K$ be a henselian discretely valued field with the valuation $v$. Let $\mathcal{O}_K$ be the valuation ring
\begin{align}
\mathcal{O}_K= \{x\in K\mid v(x)\geq 0\},
\label{valring}
\end{align}
and let $F=\mathcal{O}_K/m$ be the residue field. 
\begin{definition}[Unit group filtration]
 Let $U_K=(\mathcal{O}_K)^\times$. For each $i \in {\mathbb N}$, consider the subgroup
    \begin{align}
        U_K^{i}=\{x\in U_K\mid v(x-1)\geq i\}\ \text{for}\ i\geq 1.
    \end{align}
    Then since $U_K\supset U_K^1\supset U_K^1\supset\cdots$, we have defined a decreasing filtration on $U_K$.
\end{definition}
The definition of $\mathbb{Z}/n(r)\in D^b(K_\et)$ ensures that we have an exact triangle
\begin{align}
\xymatrix{
    (\mathbb{Z}/n)(1)\ar[r]&  \mathbb{G}_m\ar[r]^-n&\mathbb{G}_m\ar[r]&  (\mathbb{Z}/n)(1)[1]}
\end{align}
where the part prime to the characteristic is the Kummer sequence, and the $p$-part in characteristic $p$ is given by \cite{MR565469}*{Proposition I.3.23.2}. Given $a\in K^\times=H^0(K,\mathbb{G}_m)$, we denote the image of $a$ in $H^1(K,(\mathbb{Z}/n)(1))$ by $\{a\}$.

Then we have product maps
\[
H^q_n(K)\times (K^\times)^{\oplus r}\rightarrow H^{q+r}_n(K)
\]
defined by $(\chi,a_1,\cdots,a_r)\mapsto \{ \chi, a_1, \cdots, a_r \} \coloneqq \chi\cup\{a_1\}\cup\cdots\cup\{a_r\}$. 

\begin{definition}
Let $q, m\in \mathbb{N}$.
The increasing filtration $\{\text{fil}_n \ H^q_m(K)\}_{n\geq 0}$ on $H^q_m(K)$ is defined by: 
\[\chi\in \text{fil}_n\ H^q_m(K) \Longleftrightarrow \{\chi_L,1+\pi^{n+1}\mathcal{O}_L\}=0 \ \text{in}\ H_m^{q+1}(L)\]
for any henselian discrete valuation field $L$ over $K$ such that $\mathcal{O}_K\subset \mathcal{O}_L$ and $m_L=\mathcal{O}_L m_K$.
\end{definition}

When the value of \( m \) is understood from the context, we denote by $M_n=\text{fil}_n\ H^q_m(K)$. Depending on the characteristic of $K$, we have different descriptions of the consecutive quotients of the filtration. They will be reviewed in the next two subsections. Finally, we end up with the definition of Kato's Swan conductors for elements in Kato's groups.
\begin{definition}[Kato's Swan conductor \cite{MR991978}*{Definition 2.3}]
    Let $\chi\in H^q_m(K)$. We define \emph{Kato's Swan conductor} $\text{sw}_K(\chi)\in \mathbb{N}$ to be the minimum integer $n\geq 0$ such that $\chi\in  M_n$,
    i.e.
    \[
    \text{sw}_K(\chi)\coloneqq \min\{n\in \mathbb{N}\mid \chi\in M_n\}.
    \]
\end{definition}

\subsection{Equal characteristic case}
In this subsection, we assume that $\text{char}(K)=p>0$. Recall that \[H^{q}_p(K)=H^{q}(K,(\mathbb{Z}/p)(q-1))=H^1(K,\Omega^{q-1}_{K,\log})\cong\frac{\Omega_K^{q-1}}{(\text{Fr}-I)\Omega^{q-1}_K+d\Omega^{q-2}_K},\]
where $\text{Fr}$ is the Frobenius morphism. Kato generalized Brylinski's filtration \cite{MR0723946} on Witt vectors to define an increasing filtration $\{M^j\}_{j\geq 0}$ on the $p$-primary Kato's groups. For our purpose, we only consider the $p$-torsion Kato's group $H^q_p(K)$. For $j\geq0$, $M^j$ is the subgroup of $H^q_p(K)$ generated by elements of the form 
\[
a\dfrac{db_1}{b_1}\wedge\cdots\wedge\frac{db_{q-1}}{b_{q-1}}
\]
with $a \in K$, $b_1,\dots, b_{q-1} \in K^\times$, and $v(a) \geq -j$. It is clear that
\[
0\subset M^0\subset M^1\subset \cdots,
\]
with $\bigcup_{j\geq 0}M^j=H^q_p(K)$. 

Kato proved that the two filtrations $\{M_j\}$ and $\{M^j\}$ coincide, that is, $M^j=M_j$ for each $j$ \cite{MR991978}*{Theorem 3.2}. Therefore, we will use $M_j$ in the following context for convenience. 

Let $\pi\in \mathcal{O}_K$ be a uniformizer for $v$. For any $j>0$, we define two homomorphisms depending on whether $j$ is relatively prime to $p$ or $p \mid j$. In each case, a simple computation shows that the homomorphim is well defined up to a choice of a uniformizer. First, consider the case when $j$ is relatively prime to $p$. We define
\[
\Omega^{q-1}_F\rightarrow M_j/M_{j-1}
\]
by
\[
\bar{a}\frac{d\bar{b}_1}{\bar{b}_1}\wedge\cdots\wedge\frac{d\bar{b}_{q-1}}{\bar{b}_{q-1}}\mapsto \frac{a}{\pi^j}\frac{db_1}{b_1}\wedge\cdots\wedge\frac{db_{q-1}}{b_{q-1}}\ (\textup{mod}\ M_{j-1}),
\]
for $a\in \mathcal{O}_K$ and $b_1,\dots,b_{q-1}\in \mathcal{O}^\times_K$. 

Now we define the second homomorphism. Let $Z_F^{q-1},\ Z^{q-2}_F$ be the subgroup of closed forms in $\Omega^{q-1}_F,\ \Omega^{q-2}_F$ respectively. For $j>0$ and $p\mid j$, define a homomorphism 
\[
\Omega^{q-1}_F/Z^{q-1}_F\oplus \Omega^{q-2}_F/Z^{q-2}_F\rightarrow M_j/M_{j-1}
\]
as follows: On the first summand, it is defined as 
\[
\bar{a}\frac{d\bar{b}_1}{\bar{b}_1}\wedge\cdots\wedge\frac{d\bar{b}_{q-1}}{\bar{b}_{q-1}}\mapsto\frac{a}{\pi^j}\frac{db_1}{b_1}\wedge\cdots\wedge\frac{db_{q-1}}{b_{q-1}}\ (\textup{mod}\ M_{j-1}),
\]
and for the second summand it is defined as 
\[
\bar{a}\frac{d\bar{b}_1}{\bar{b}_1}\wedge\cdots\wedge\frac{d\bar{b}_{q-2}}{\bar{b}_{q-2}}\mapsto\frac{a}{\pi^j}\frac{d\pi}{\pi}\wedge\frac{db_1}{b_1}\wedge\cdots\wedge\frac{db_{q-2}}{b_{q-2}}\ (\textup{mod}\ M_{j-1}),
\]
where $a\in \mathcal{O}_K$ and $b_1,\dots,b_{q-1}\in \mathcal{O}_K^\times$.

The homomorphisms are well defined (although they depend on the choice of uniformizer $\pi$). We recall Cartier's theorem in this context. It says that, for a field $F$ of characteristic $p>0$, $q\in \mathbb{N}$, the subgroups $Z^q_F$ of closed forms in $\Omega^q_F$ is generated by the exact forms together with the forms of the form $a^p(db_1/b_1)\wedge\cdots\wedge(db_q/b_q)$ \cite{MR1386649}*{Lemma 1.5.1}.

To describe the subgroup $M_0$, we need to describe tame extensions of $K$ \cite{MR4411477}. We fix a discrete valuation $v$ as above. An extension field of $K$ is called {\it tame} with respect to $v$  if it is a union of finite extensions of $K$ for which the extension of residue fields is separable and the ramification degree is invertible in the residue field $F$. Let $K_{\textup{tame}}$ be the maximal tamely ramified extension of $K$ (with respect to $v$) in a separable closure of $K$. Define the {\it tame} (or {\it tamely ramified}) subgroup of $H^q(K,(\mathbb{Z}/p)(q-1))$ by
\begin{align}
H^q_{\textup{tame}}(K,(\mathbb{Z}/p)(q-1)) = \textup{ker}\Bigl(
H^q(K, (\mathbb{Z}/p)(q-1))\rightarrow H^q(K_{\textup{tame}},  (\mathbb{Z}/p)(q-1))
\Bigr).
\label{tamesubgroup}
\end{align}
There is residue homomorphism on the tamely ramified subgroup
\[
\partial_v:H^q_{\textup{tame}}(K,(\mathbb{Z}/p)(q-1))\rightarrow H^{q-1}(F,(\mathbb{Z}/p)(q-2)),
\]
characterized by the property that
\[
\partial_v(a\frac{d\pi}{\pi}\wedge\frac{db_1}{b_1}\wedge\cdots\wedge\frac{db_{q-2}}{b_{q-2}})=\bar{a}\frac{d\bar{b}_1}{\bar{b}_1}\wedge\cdots\wedge\frac{d\bar{b}_{q-2}}{\bar{b}_{q-2}},
\]
where $a\in\mathcal{O}_K, b_1,\dots,b_{q-2}\in \mathcal{O}_K^\times$. Note that this description of elements of the tamely ramified subgroup follows from the theorem below. Then we define the {\it unramified} subgroup $H^q_{\textup{nr}}(F,(\mathbb{Z}/p)(q-1))$ to be the kernel of the residue homomorphism $\partial_v$.

\begin{theorem}[$\text{char}(K)=p>0$, equal characteristic case \cite{MR991978}*{Section 3}, \cite{MR4411477}]
\label{equalquotient}
    Let $K$ be a henselian discretely valued field of characteristic $p>0$ with the residue field $F$. Then $H^{q}_p(K)=H^q(K,(\mathbb{Z}/p)(q-1))$ has an increasing filtration $\{M_j\}_{j\geq 0}$ as above, with isomorphisms (depending on the choice of a uniformizer)
     \[
     M_j/M_{j-1}\cong \left\{
 \begin{aligned}
&\Omega^{q-1}_F &\textup{if}\ j>0 \ &\textup{and}\ p\nmid j,\\
&\Omega^{q-1}_F/Z^{q-1}_F\oplus \Omega^{q-2}_F/Z^{q-2}_F&\textup{if}\ j>0 \ &\textup{and}\ p\mid j.
 \end{aligned}
 \right.
    \]
    Moreover, $M_0$ is the tame subgroup and there is a well-defined residue homomorphism on $M_0$, yielding an exact sequence
\[
\xymatrix{
0\ar[r]&  H^{q}_{\textup{nr}}(K,(\mathbb{Z}/p)(q-1))\ar[r]& H^{q}_{\textup{tame}}(K,(\mathbb{Z}/p)(q-1))\ar[r]^-{\partial_v} & H^{q-1}(F,(\mathbb{Z}/p)(q-2))\ar[r]& 0, 
}
\]
where $H^{q}_{\textup{nr}}(K,(\mathbb{Z}/p)(q-1))$ is the unramified subgroup with respect to $v$. Finally, $H^{q}_{\textup{nr}}(K,(\mathbb{Z}/p)(q-1))\cong H^{q}(F,(\mathbb{Z}/p)(q-1))$ by the henselian property of $K$.
\end{theorem}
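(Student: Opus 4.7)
The plan is to establish the four parts of the theorem — that $\{M_j\}$ is an increasing filtration, the explicit description of each quotient $M_j/M_{j-1}$, the identification of $M_0$ as the tame subgroup, and the residue exact sequence — by following Kato's strategy. First, I would reconcile the trivialization-defined filtration $\{M_j\}$ with the symbolic filtration $\{M^j\}$ already introduced, where $M^j$ is generated by classes $a\,db_1/b_1 \wedge \cdots \wedge db_{q-1}/b_{q-1}$ with $v(a) \geq -j$. The inclusion $M^j \subseteq M_j$ is checked directly: pairing such a symbol with $\{1 + \pi^{n+1}u\}$ for $n \geq j$ produces a representative whose coefficient has nonnegative valuation, and this coefficient can be eliminated via the Artin–Schreier relation $x^p - x \equiv 0$ modulo exact forms. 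The reverse inclusion $M_j \subseteq M^j$ is more delicate and uses a contrapositive argument: for $\chi \notin M^j$, one constructs a henselian (tamely ramified) extension $L/K$ and an element $1 + \pi^{n+1} v$ whose cup product with $\chi_L$ is nontrivial, thereby witnessing $\chi \notin M_j$.

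Next I would construct the quotient isomorphisms. For $p \nmid j$, well-definedness of the map $\Omega^{q-1}_F \to M_j/M_{j-1}$ follows because any lifting ambiguity modifies $a$ by an element of $\pi \mathcal{O}_K$ or replaces some $b_i$ by $b_i(1 + \pi c)$, and the resulting correction lands in $M_{j-1}$ via $d(b_i(1+\pi c))/(b_i(1+\pi c)) = db_i/b_i + d(1+\pi c)/(1+\pi c)$ together with an Artin–Schreier manipulation. Surjectivity reduces, via these relations, to symbols whose $b_i$'s are units and whose coefficient has pole order exactly $j$; any stray $d\pi/\pi$ factor is absorbed using a $1/j$ multiple (permissible since $p \nmid j$) and $\text{Fr} - I$. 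Injectivity uses Cartier's theorem: a symbol that becomes zero modulo $M_{j-1}$ plus the Artin–Schreier and exactness relations has residue a closed exact form in $\Omega^{q-1}_F$, hence zero. When $p \mid j$, write $j = pk$; now Frobenius applied to symbols in $M^k$ lands in $M^j$ and contributes $p$-th powers together with exact forms, which by Cartier's theorem are exactly the closed forms $Z^{q-1}_F$. The second summand $\Omega^{q-2}_F/Z^{q-2}_F$ records symbols carrying an explicit $d\pi/\pi$ factor, which in the case $p \mid j$ can no longer be absorbed via the previous trick and thus form an independent direct summand of the quotient, with kernel again given by $Z^{q-2}_F$ via Cartier.

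Finally, $M_0$ is generated by symbols with $a \in \mathcal{O}_K$; such symbols become trivial over $K_{\text{tame}}$ because principal units $1 + \pi \mathcal{O}_L$ become $p$-th powers up to Artin–Schreier after adjoining suitable tame roots, proving $M_0 \subseteq H^q_{\text{tame}}$. The converse follows by showing that any genuinely wild class must have a pole detectable over $K_{\text{tame}}$. The residue $\partial_v$ is defined on the two normal forms (with or without $d\pi/\pi$) and extended linearly; well-definedness on the defining relations, and independence of the choice of uniformizer, are direct calculations using the identity $d\pi/\pi - d\pi'/\pi' = d(\pi/\pi')/(\pi/\pi')$ when $\pi' = \pi u$ with $u \in \mathcal{O}_K^\times$. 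The kernel of $\partial_v$ is the unramified subgroup, which by the henselian property of $K$ is pulled back from $F$, giving $H^q_{\text{nr}}(K, (\mathbb{Z}/p)(q-1)) \cong H^q(F, (\mathbb{Z}/p)(q-1))$. The main obstacle is the quotient computation in the wildly ramified regime $p \mid j$: the Frobenius maps $M_k \to M_{pk}$, and untangling these contributions requires both careful symbol calculus and a clean application of Cartier's theorem; additionally, the second summand with the distinguished $d\pi/\pi$ factor requires verifying that although the explicit isomorphism depends on the uniformizer, the abstract isomorphism class of $M_j/M_{j-1}$ does not.
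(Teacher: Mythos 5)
The paper offers no proof of this theorem: it is quoted from Kato \cite{MR991978}*{Section 3} (in particular Theorem 3.2 there) and from \cite{MR4411477}, and the surrounding text only sets up the definitions needed to state it. So there is no in-paper argument to compare yours against; what I can assess is whether your sketch reconstructs the standard proof. In outline it does: identifying the trivialization-defined filtration $\{M_j\}$ with the symbol-generated filtration $\{M^j\}$, absorbing the $d\pi/\pi$ factor by a $1/j$ multiple when $p\nmid j$, killing $p$-th-power coefficients via $\mathrm{Fr}-I$ when $p\mid j$ and invoking Cartier's theorem to recognize $Z^{q-1}_F$, and defining $\partial_v$ on normal forms are all the right moves, and they correctly explain the shape of the quotients and why the extra summand $\Omega^{q-2}_F/Z^{q-2}_F$ appears exactly when $p\mid j$.

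The gap is that everything you verify by direct symbol manipulation yields only the easy inclusions: $M^j\subseteq M_j$, surjectivity of the displayed maps onto $M_j/M_{j-1}$, and $M_0\subseteq H^q_{\textup{tame}}$. The substance of Kato's theorem is the opposite direction --- that $M_j\subseteq M^j$ and that the maps from $\Omega^{q-1}_F$ (resp.\ the direct sum) into $M_j/M_{j-1}$ are injective, i.e.\ that no further relations in $H^q_p(K)$ collapse these quotients. Your sketch disposes of this in one sentence (``construct a henselian extension $L/K$ and an element $1+\pi^{n+1}v$ whose cup product with $\chi_L$ is nontrivial''), and the injectivity argument ``a symbol that becomes zero modulo $M_{j-1}$ \dots\ has residue a closed exact form, hence zero'' is circular: it presupposes control of all relations modulo $M_{j-1}$, which is precisely what must be proved. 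Kato's actual construction of the witnessing pair $(L, 1+\pi^{n+1}v)$ passes through $\mathcal{O}_K[T]$ with a Gauss-type extension of the valuation and the refined Swan conductor valued in differential forms of the residue field (this is the machinery alluded to in the commented-out portion of Section 2 of this paper); without some such device the lower-bound half of the theorem, and hence the isomorphisms rather than mere surjections, is not established. Since the paper is content to cite the result this is not a defect of the paper, but as a standalone proof your proposal is an accurate roadmap rather than a complete argument.
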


\subsection{Mixed characteristic case}
In this subsection, we assume that $\text{char}(K)=0$. Furthermore, we will assume that $K$ contains a primitive $p$-th root of unity, $\zeta$. In general, when $K$ does not contain a primitive $p$-th root of the unity, we can also describe the filtration $\{M_j\}_{j\geq 0}$ and their consecutive quotients \cite{MR991978}*{Proposition 4.1}.  

Let $e=v(p)$ and $N=ep(p-1)^{-1}$. Notice that $v(\zeta-1)=e(p-1)^{-1}$ is an integer and hence $p\mid N$. Using the primitive $p$-th root $\zeta$ of the unity, we can identify $\mathbb{Z}/p=(\mathbb{Z}/p)(1):\ 1\mapsto \zeta$ and $H^{q}_p(K) \cong H^q(K,(\mathbb{Z}/p)(q))$. Then we can describe the elements in $H^q_p(K)$ by symbols from Milnor $K$-theory.
\begin{theorem}[Norm residue isomorphism theorem \cite{MR2031199}]
    Let $K$ be a field and $p$ be an integer invertible in $K$. Then
      \[
   H^n(K,(\mathbb{Z}/p)(n))\cong K^M_n(K)/p.
    \]
\end{theorem}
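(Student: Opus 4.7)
The plan is to construct the norm residue map $K^M_n(K)/p \to H^n(K,(\mathbb{Z}/p)(n))$ explicitly and then argue that it is an isomorphism. Since $p$ is invertible in $K$, one has $(\mathbb{Z}/p)(n) \cong \mu_p^{\otimes n}$, and for each $a \in K^\times$ Kummer theory supplies a class $\{a\} \in H^1(K, \mu_p)$. Iterated cup products yield a homomorphism $(K^\times)^{\otimes n} \to H^n(K, \mu_p^{\otimes n})$, and the first task is to check that it descends to $K^M_n(K)/p$. This amounts to verifying the Steinberg relation $\{a\} \cup \{1-a\} = 0$ whenever $a,\,1-a \in K^\times$, which is a classical computation: $1-a$ is a norm from the Kummer extension $K(\sqrt[p]{a})/K$, so $\{1-a\}$ restricts to zero there and the cup product vanishes by the projection formula.

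For bijectivity, I would proceed by induction on $n$. The base case $n=1$ is the Kummer isomorphism $K^\times/(K^\times)^p \cong H^1(K, \mu_p)$, and the case $n=2$ is the Merkurjev--Suslin theorem, itself already a deep result. For general $n$ this is the Bloch--Kato conjecture, and the approach I would follow is Voevodsky's. The first reduction is to the case where $K$ is $p$-special, i.e., admits no finite extensions of degree prime to $p$; this follows from a transfer-and-restriction argument exploiting that multiplication by any integer coprime to $p$ acts invertibly on both sides.

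The inductive step is the technical core. Given a hypothetical nontrivial symbol $\alpha = \{a_1, \ldots, a_n\} \in K^M_n(K)/p$ lying in the kernel of the norm residue map, one attaches to it a Rost norm variety $X_\alpha$: a smooth projective $K$-variety which is $p$-generically split and whose function field splits $\alpha$. Combining the Beilinson--Lichtenbaum reformulation of Bloch--Kato (an equivalence between motivic and \'etale cohomology in a suitable range) with Voevodsky's mod-$p$ motivic Steenrod operations acting on the motivic cohomology of $X_\alpha$, one derives a contradiction from the existence of such an $\alpha$, thereby closing the induction.

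The hard part, by a wide margin, is the construction of the norm varieties $X_\alpha$ with the correct motivic invariants (Rost) and the development of the motivic Steenrod algebra mod $p$ in Voevodsky's motivic homotopy category. Both ingredients lie well outside the scope of the present paper, so in what follows we treat the norm residue isomorphism as a black-box input and refer to \cite{MR2031199} for the complete argument.
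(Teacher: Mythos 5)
The paper offers no proof of this statement: it is the Bloch--Kato conjecture, imported wholesale from \cite{MR2031199} (together with Rost, Suslin--Joukhovitski and Weibel), so your decision to treat it as a black box is exactly what the paper does, and your outline of the actual argument --- Galois symbol via cup products, Steinberg relation, reduction to $p$-special fields, Rost norm varieties and motivic Steenrod operations within the Beilinson--Lichtenbaum framework --- is a fair summary of the literature. One correction to the only step you actually argue, the Steinberg relation: setting $L=K(\sqrt[p]{a})$, it is $\{a\}$, not $\{1-a\}$, that restricts to zero on $L$ (because $a$ becomes a $p$-th power there), while the role of $1-a$ is that $1-a=N_{L/K}(1-\sqrt[p]{a})$; the projection formula then gives $\{a\}\cup\{1-a\}=\mathrm{cor}_{L/K}\bigl(\mathrm{res}_{L/K}\{a\}\cup\{1-\sqrt[p]{a}\}\bigr)=0$. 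As written, ``$1-a$ is a norm, so $\{1-a\}$ restricts to zero'' is a non sequitur, though the intended argument is standard and correct once the two classes are put in their proper places.
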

The case $n=2$ is proved by Merkurjev and Suslin \cite{MR0675529}. When $p$ equals to the characteristic of the base field, it is the Bloch-Gabber-Kato theorem.
\begin{theorem}[Bloch-Gabber-Kato Theorem \cite{MR0849653}]
    Let $F$ be a field of characteristic $p>0$. For all integers $n\geq 0$, the differential symbol
    \[
    \phi^n_F:K^M_n(F)/p\rightarrow H^n(F,\mathbb{Z}/p(n))\cong \Omega^n_{F,\log}
    \]
    is an isomorphism.
\end{theorem}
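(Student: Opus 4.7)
The plan is to establish the differential symbol $\phi^n_F$ as an isomorphism in three stages: well-definedness, surjectivity, and injectivity, with only the last being substantive.

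For well-definedness, I would define $\phi^n_F$ first on the tensor algebra $(F^\times)^{\otimes n}$ by $a_1 \otimes \cdots \otimes a_n \mapsto \frac{da_1}{a_1} \wedge \cdots \wedge \frac{da_n}{a_n}$, using the Leibniz rule to check multiplicativity in each slot, and the computation $\frac{da}{a} \wedge \frac{d(1-a)}{1-a} = -\frac{da \wedge da}{a(1-a)} = 0$ to pass through the Steinberg relation. Since $p\cdot \{a, a_2, \ldots, a_n\} = \{a^p, a_2, \ldots, a_n\}$ maps to $\frac{d(a^p)}{a^p} \wedge \cdots = 0$ in characteristic $p$, the map further factors through $K^M_n(F)/p$. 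Surjectivity is immediate from the definition: $\Omega^n_{F,\log}$ is by construction the subgroup of $\Omega^n_F$ generated by logarithmic forms $\frac{db_1}{b_1} \wedge \cdots \wedge \frac{db_n}{b_n}$, each of which equals $\phi^n_F(\{b_1, \ldots, b_n\})$.

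The substantive content is injectivity. I would fix a $p$-basis $B = \{b_i\}_{i \in I}$ of $F$, obtaining an $F$-basis of $\Omega^n_F$ consisting of wedge products $db_{i_1} \wedge \cdots \wedge db_{i_n}$ indexed by strictly increasing $n$-tuples after fixing a total order on $I$. The strategy is to induct on $n$, putting a decreasing filtration on $K^M_n(F)/p$ by ``multiplicative weight'' of symbols with respect to $B$ together with a compatible filtration on $\Omega^n_{F,\log}$ inherited from $\Omega^n_F$, and then to check that $\phi^n_F$ induces an isomorphism on associated graded pieces. The base case $n=1$ reduces to $F^\times / F^{\times p} \cong \Omega^1_{F,\log}$, which follows from the fact that $da/a = 0$ forces $a \in F^{\times p}$, a direct consequence of the $p$-basis characterization of $\Omega^1_F$.

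The main obstacle will be verifying that the Steinberg relation is compatible with the weight filtration on $K^M_n(F)/p$, which requires delicate symbol manipulations in the inductive step. An alternative, more conceptual route is via the Artin--Schreier-type exact sequence
\[0 \longrightarrow \Omega^n_{F,\log} \longrightarrow \Omega^n_F \xrightarrow{\,1 - C^{-1}\,} \Omega^n_F/d\Omega^{n-1}_F \longrightarrow 0,\]
where $C^{-1}$ denotes the inverse Cartier operator, which identifies $\Omega^n_{F,\log}$ with $\ker(1 - C^{-1})$; one then matches this kernel with $K^M_n(F)/p$ through symbol-level computations carried out in a fixed $p$-basis. Either way, the crux is this compatibility check between Milnor symbols and the $p$-basis presentation of differential forms, and I expect it to consume most of the argument.
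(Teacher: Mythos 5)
The paper does not prove this statement: it is the Bloch--Gabber--Kato theorem, imported wholesale from \cite{MR0849653}, so there is no internal argument to compare yours against. Judged on its own terms, your proposal correctly dispatches the routine parts: the dlog map is well defined on $K^M_n(F)/p$ by the Leibniz rule, the vanishing of $\frac{da}{a}\wedge\frac{d(1-a)}{1-a}$, and $d(a^p)=0$; and surjectivity onto the subgroup of $\Omega^n_F$ generated by logarithmic forms is tautological. (One caveat: identifying that dlog-generated subgroup with $H^n(F,\mathbb{Z}/p(n))$ as the paper defines it, namely as cohomology of the logarithmic de Rham--Witt sheaf, is itself part of the theorem rather than a definition, so even ``surjectivity'' is not entirely free.)

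The genuine gap is injectivity, which you acknowledge but do not supply. The filtration you gesture at --- by ``multiplicative weight'' with respect to a $p$-basis --- is in the spirit of Gabber's argument, which filters $K^M_n(F)$ by the subgroups generated by symbols with entries in the subfields $F^p(b_{i_1},\dots,b_{i_r})$ and compares with the corresponding filtration on forms. But the proof that the induced map on associated graded pieces is injective is exactly where all the work lies: it requires controlling Milnor $K$-theory under degree-$p$ purely inseparable extensions $F\subset F(b^{1/p})$ (an exact sequence of Izhboldin--Kato type), a norm or specialization argument to descend, and a reduction to fields of finite $p$-rank; the Cartier-operator route you mention as an alternative faces the same symbol-level comparison and does not circumvent it. None of this is present, and your closing sentence --- that the compatibility check ``will consume most of the argument'' --- concedes that the substantive content of the theorem has been deferred rather than proved. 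As written, the proposal is a correct table of contents for a proof, not a proof.
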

Therefore, we have that $H^{q}_p(K)\cong K_q^M(K)/p$. Kato used the unit group filtration on $\mathcal{O}_K$ to define a decreasing filtration $\{M^j\}_{j\geq 0}$ on $H^{q}_p(K)$. For $j\geq0$, $M^j$ is the subgroup of $H^{q}_p(K)$ generated by elements of the form 
\[
\{a,b_1,\cdots,b_{q-1}\}
\]
with $a \in U_K^j$, $b_1,\dots, b_q \in K^\times$. It is clear that
\[
H^{q}_p(K)=M^0\supset M^1\supset \cdots\supset M^{ep(p-1)^{-1}}\supset M^{[ep(p-1)^{-1}+1]}=0.
\]
Notice that $M^{n}=0$ for $n>ep(p-1)^{-1}$ by the henselian property of $K$. More precisely, when $n>ep(p-1)^{-1}$, $1+\pi^n\mathcal{O}_K\subset (1+\pi^{n-e}\mathcal{O}_K)^p$ by
\[
(1+\pi^{n-e}x)^p=1+p\pi^{n-e}x+\sum\limits^{p-1}_{i=2}c_ix^i+\pi^{p(n-e)}x^p
\]
with $v(c_i)>v(p\pi^{n-e})=n$ and $p(n-e)>n$. Kato proved that the two filtrations $\{M_j\}$ and $\{M^{N-j}\}$ coincide, that is, $M_j=M^{N-j}$ for each $j$ \cite{MR991978}*{Proposition 4.1}. Therefore, we will use $M_j$ in the following context for convenience.

Let $\pi\in \mathcal{O}_K$ be a uniformizer for $v$. For any $j>0$, we define three homomorphisms depending on whether $p\nmid j$, $p \mid j<N$ and $j=N$. In each case, a simple computation shows that the homomorphim is well defined up to the choice of a uniformizer. First, consider the case when $j$ is relatively prime to $p$. We define
\[
\Omega^{q-1}_F\rightarrow M_j/M_{j-1}
\]
by
\[
\bar{a}\frac{d\bar{b}_1}{\bar{b}_1}\wedge\cdots\wedge\frac{d\bar{b}_{q-1}}{\bar{b}_{q-1}}\mapsto \{1+\pi^{N-j} a,b_1,\cdots,b_{q-1}\}\ (\textup{mod}\ M_{j-1}),
\]
for $a\in \mathcal{O}_K$ and $b_1,\dots,b_{q-1}\in \mathcal{O}^\times_K$. 

Now we define the second homomorphism. Let $Z_F^{q-1}, Z^{q-2}_F$ be the subgroup of closed forms in $\Omega^{q-1}_F,\Omega^{q-2}_F$ respectively. For $j>0$ and $p\mid j$, define a homomorphism 
\[
\Omega^{q-1}_F/Z^{q-1}_F\oplus \Omega^{q-2}_F/Z^{q-2}_F\rightarrow M_j/M_{j-1}
\]
as follows: On the first summand, it is defined as 
\[
\bar{a}\frac{d\bar{b}_1}{\bar{b}_1}\wedge\cdots\wedge\frac{d\bar{b}_{q-1}}{\bar{b}_{q-1}}\mapsto\{1+\pi^{N-j} a,b_1,\cdots,b_{q-1}\}\ (\textup{mod}\ M_{j-1}),
\]
and for the second summand it is defined as 
\[
\bar{a}\frac{d\bar{b}_1}{\bar{b}_1}\wedge\cdots\wedge\frac{d\bar{b}_{q-2}}{\bar{b}_{q-2}}\mapsto\{1+\pi^{N-j} a,b_1,\cdots,b_{q-2},\pi\}\ (\textup{mod}\ M_{j-1}),
\]
where $a\in \mathcal{O}_K$ and $b_1,\dots,b_{q-1}\in \mathcal{O}_K^\times$.

Finally, we define the third homomorphism. For $j=N$, define a homomorphism 
\[K^M_q(F)/p\oplus K^M_{q-1}(F)/p\rightarrow M_N/M_{N-1}\]
as follows: On the first summand, it is defined as
\[
\{\bar{a}_1,\cdots,\bar{a}_{q}\}\mapsto\{a_1,\cdots,a_q\},
\]
and for the second summand it is defined as
\[
\{\bar{a}_1,\cdots,\bar{a}_{q-1}\}\mapsto\{a_1,\cdots,a_{q-1},\pi\}.
\]
The homomorphisms are well defined (although they depend on the choice of uniformizer $\pi$). 

We can also consider the tamely ramified subgroup and the unramified subgroup of $H^q(K)$ as in (\ref{tamesubgroup}). In the mixed characteristic case, the residue homomorphism is described as follows:
\[
\partial_v:H^{q}_{\tame}(K,(\mathbb{Z}/p)(q-1))\cong H^{q}_{\tame}(K,(\mathbb{Z}/p)(q))\rightarrow H^{q-1}(F,(\mathbb{Z}/p)(q-2)),
\]
is characterized by the property that
\[
\partial_v(\{1+\pi^Na,b_1,\cdots,b_{q-2},\pi\})=\bar{a}\frac{d\bar{b}_1}{\bar{b}_1}\wedge\cdots\wedge\frac{d\bar{b}_{q-2}}{\bar{b}_{q-2}},
\]
where $a\in\mathcal{O}_K, b_1,\dots,b_{q-2}\in \mathcal{O}_K^\times$. 
\begin{theorem}[$\text{char}(K)=0$, mixed characteristic case]
    Let $K$ be a henselian discretely valued field of characteristic $0$ with the valuation $v$ and the residue field $F$ of characteristic $p>0$. Assume that $K$ contains a primitive $p$-th root $\zeta$ of $1$. Let $N=v(p)p(p-1)^{-1}$. Then $H^{q}_p(K)\coloneqq H^q(K,(\mathbb{Z}/p)(q-1))\cong H^q(K,(\mathbb{Z}/p)(q))$ has an increasing filtration $\{M_j\}_{j=0}^N$ as above, with isomorphisms (depending on the choice of a uniformizer)
    \[
     M_j/M_{j-1}\cong \left\{
 \begin{array}{cl}
\Omega^{q-1}_F &\textup{if}\  p\nmid j,\\
\Omega^{q-1}_F/Z^{q-1}_F\oplus \Omega^{q-2}_F/Z^{q-2}_F&\textup{if}\ 0<j<ep(p-1)^{-1} \ \textup{and}\ p\mid j,\\
K^M_q(F)/p\oplus K^M_{q-1}(F)/p &\textup{if} \ j=N\ .
 \end{array}
 \right.
    \]
    Moreover, $M_0$ is the tame subgroup and there is a well-defined residue homomorphism on $M_0$, yielding an exact sequence
\[
\xymatrix{
0\ar[r]&  H^{q}_{\textup{nr}}(K,(\mathbb{Z}/p)(q-1))\ar[r]& H^{q}_{\textup{tame}}(K,(\mathbb{Z}/p)(q-1))\ar[r]^-{\partial_v} & H^{q-1}(F,(\mathbb{Z}/p)(q-2))\ar[r]& 0, 
}
\]
where $H^{q}_{\textup{nr}}(K,(\mathbb{Z}/p)(q-1))$ is the unramified subgroup with respect to $v$. Finally, $H^{q}_{\textup{nr}}(K,(\mathbb{Z}/p)(q-1))\cong H^{q}(F,(\mathbb{Z}/p)(q-1))$ by the henselian property of $K$.
\end{theorem}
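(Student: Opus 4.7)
The plan is to adapt Kato's strategy from \cite{MR991978} that was used in the equal characteristic case (Theorem \ref{equalquotient}), with three modifications reflecting the mixed characteristic setting. First, since $\text{char}(K)=0$, $p$ is invertible in $K$, so by the Merkurjev--Suslin theorem I identify $H^q_p(K)\cong K^M_q(K)/p$ via the fixed primitive $p$-th root of unity $\zeta$, transporting the whole problem to mod $p$ Milnor $K$-theory. Under this identification the decreasing filtration $\{M^j\}$ becomes the subgroup generated by symbols $\{u,b_1,\dots,b_{q-1}\}$ with $u\in U_K^j$.

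Next, I would verify the truncation $M^n=0$ for $n>N$ and the identification $M_j=M^{N-j}$. The truncation is exactly the expansion $(1+\pi^{n-e}x)^p=1+p\pi^{n-e}x+\sum c_ix^i+\pi^{p(n-e)}x^p$ given in the excerpt, which shows $U_K^n\subset (U_K^{n-e})^p$ once $n>N$, so every generator of $M^n$ is a $p$-th power and vanishes modulo $p$. The coincidence $M_j=M^{N-j}$ is obtained by unwinding the defining condition: $\{\chi_L,1+\pi^{j+1}\mathcal{O}_L\}=0$ in $H^{q+1}_m(L)$ translates, via the bilinear pairing on Milnor $K$-theory, into a symbolic vanishing that can be tested after base change to a totally ramified henselian extension, reducing to the assertion $M^{N-j}\cdot U_L^{j+1}=0$.

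The core of the proof is constructing and identifying the three graded piece maps exactly as stated. For each, one must check: well-definedness modulo $M_{j-1}$ (independence of the lifts $a$, $b_i$ and of the uniformizer $\pi$), vanishing on the relations that cut out $\Omega^{q-1}_F$ or its quotient, and bijectivity. Surjectivity follows by reducing every generator of $M_j$ modulo $M_{j-1}$ to the prescribed shape using Steinberg relations, the valuation decomposition $K^\times=\pi^{\mathbb{Z}}\times U_K$, and the multiplicativity of the unit filtration $U_K^i\cdot U_K^j\subset U_K^{i+j}$. For the case $p\mid j<N$, the key new input is showing that exact forms land in $M_{j-1}$: this uses the identity
\[
\{1+\pi^{N-j}ab,\,ab,\,c_2,\dots,c_{q-1}\}\equiv 0\pmod{M_{j-1}},
\]
obtained from $\{x,1-x\}=0$ combined with the higher-order Steinberg computation, together with Cartier's theorem identifying $Z^{q-1}_F$ as the span of exact and $p$-th power logarithmic forms.

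The main obstacle will be the injectivity of the graded piece maps. For $j=N$, injectivity is essentially the Bloch--Gabber--Kato theorem applied to $F$, identifying $M_N/M_{N-1}$ with the mod $p$ Milnor $K$-theory of the residue field. For the intermediate range $0<j<N$ with $p\mid j$ the difficulty is separating the two summands $\Omega^{q-1}_F/Z^{q-1}_F$ and $\Omega^{q-2}_F/Z^{q-2}_F$ without overcounting; my plan is to construct a partial inverse locally by expanding symbols in powers of $\pi$ and reading off the coefficient in $F$, then matching this against the equal-characteristic description through a deformation comparison with the associated graded of $\mathcal{O}_K$. The exactness of the tame/unramified sequence on $M_0$ then proceeds as in the equal characteristic case, by composing with the tame symbol and identifying the kernel with $H^q_{\textup{nr}}$ through the henselian inflation-restriction argument.
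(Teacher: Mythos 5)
The paper does not actually prove this theorem: it is a recollection of Kato's results, with the identification $M_j=M^{N-j}$ and the description of the graded quotients cited to \cite{MR991978}*{Proposition 4.1} and the tame/unramified sequence to \cite{MR4411477}. So your proposal is really being measured against Kato's original argument. Your outline gets the routine parts right: the truncation $M^n=0$ for $n>N$, well-definedness of the three homomorphisms, surjectivity onto $M_j/M_{j-1}$ via Steinberg relations and the decomposition $K^\times=\pi^{\mathbb Z}\times U_K$, and the observation that $p\mid j$ together with $p\mid N$ is what forces exact forms into $M_{j-1}$ (your displayed identity is of the right shape, since $\{1+\pi^{N-j}c,-\pi^{N-j}c\}=0$ and $(N-j)\equiv 0 \bmod p$ kills the residual $\pi$-term).

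The genuine gap is injectivity of the graded-piece maps, together with the hard inclusion $\text{fil}_j\,H^q_p(K)\subset M^{N-j}$; these are the content of the theorem, and your plan for them is not an argument. There is no well-defined operation of ``expanding symbols in powers of $\pi$ and reading off the coefficient,'' because a class in $K^M_q(K)/p$ has many symbolic presentations, and a ``deformation comparison with the associated graded of $\mathcal{O}_K$'' is not something you have constructed. Kato's actual device is the refined Swan conductor: one pairs $\chi$ with units of the form $1+\pi^{j+1}t$ over henselian extensions $L/K$ with $m_L=m_K\mathcal{O}_L$ and enlarged residue field (e.g. $F(t)$), lands in $H^{q+1}$ of the residue field of $L$, and proves this pairing is nondegenerate on $M_j/M_{j-1}$; that single computation delivers both the injectivity and the reverse inclusion of filtrations, and at $j=N$ it is where Bloch--Gabber--Kato genuinely enters (merely quoting it for $F$ does not show that a symbol of units nontrivial in $K^M_q(F)/p$ stays nontrivial modulo $M_{N-1}$). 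Without this duality input your argument only shows that the displayed groups surject onto the graded pieces, which is strictly weaker than the theorem and not enough for the symbol-length applications later in the paper. A minor further point: the identification $H^q_p(K)\cong K^M_q(K)/p$ for general $q$ requires the full norm residue isomorphism theorem rather than Merkurjev--Suslin (which is the case $q=2$); Kato avoids it by working directly with the product structure on $H^q_n(K)$.
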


\section{Symbol length problem for  \texorpdfstring{$K_2^M(F)/p$}{em} and \texorpdfstring{$\Omega^1_F/Z^1_F$}{em}}
\label{secofsymbol}
In this section, we discuss the symbol length problem for groups $K_2^M(F)/p$ and $\Omega^1_F/Z^1_F$. 
\subsection{Symbol length of \texorpdfstring{$K_2^M(F)/p$}{2}}
\begin{definition}[Symbol length in $K_2^M(F)/p$]
     Let $F$ be a field of characteristic $p > 0$. Let $\alpha\in K_2^M(F)/p$. The symbol length $\text{len}(\alpha)$ of $\alpha$ in $K_2^M(F)/p$ is defined to be the minimal integer $m$ such that $\alpha=\{a_1,b_1\}+\cdots+\{a_m,b_m\}$ in $K_2^M(F)/p$ for some $a_i, b_j \in F^\times$.

    Then we define the symbol length of $K_2^M(F)/p$ by
    \[
    \len{K_2^M(F)/p}:=\sup\limits_{\alpha}\{\len{\alpha}\}.
    \]
\end{definition}
Recall that the $p$-rank of $F$ is defined to be the integer $\text{log}_p([F:F^p])$. We collect the known results when the $p$-rank of $F$ is no more than $3$.
\begin{lemma}[\cite{MR3219517}*{Lemma 1.3}]
Let $F$ be field of characteristic $p>0$ and $[F:F^p]=p$. Then $K_2^M(F)/p=0$.
\end{lemma}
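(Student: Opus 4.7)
The plan is to reduce the vanishing of $K_2^M(F)/p$ to the vanishing of $\Omega^2_F$ via the Bloch--Gabber--Kato theorem, and then to observe that $\Omega^2_F = 0$ as a direct consequence of the hypothesis $[F:F^p] = p$.

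First, I would invoke the Bloch--Gabber--Kato theorem (stated earlier in the excerpt) to identify $K_2^M(F)/p$ with the group of logarithmic differentials $\Omega^2_{F,\log} \subseteq \Omega^2_F$ via the differential symbol $\phi^2_F\colon \{a,b\} \mapsto \tfrac{da}{a}\wedge \tfrac{db}{b}$. Since $\phi^2_F$ is an isomorphism onto $\Omega^2_{F,\log}$, it suffices to show that $\Omega^2_F = 0$.

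Next I would compute the $F$-dimension of $\Omega^1_F$. Standard theory of K\"ahler differentials for fields of characteristic $p>0$ gives $\dim_F \Omega^1_F = \log_p [F:F^p]$: a $p$-basis $\{t\}$ of $F$ (which exists and consists of a single element because $[F:F^p] = p$) yields an $F$-basis $\{dt\}$ of $\Omega^1_F$. Consequently $\Omega^1_F$ is a one-dimensional $F$-vector space, so
\[
\Omega^2_F \;=\; \wedge^2_F\, \Omega^1_F \;=\; 0.
\]
Therefore $\Omega^2_{F,\log} = 0$, and by Bloch--Gabber--Kato $K_2^M(F)/p = 0$.

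There is no serious obstacle here: the only point requiring care is the identification $\dim_F \Omega^1_F = \log_p[F:F^p]$, which uses that the derivation $d\colon F \to \Omega^1_F$ sends a $p$-basis to an $F$-basis. Everything else is formal, and the argument does not need to manipulate individual symbols because Bloch--Gabber--Kato handles the passage from $K$-theory to differentials in one step.
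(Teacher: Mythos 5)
Your proof is correct. The paper does not actually prove this lemma---it only cites it from Parimala--Suresh---but your argument (Bloch--Gabber--Kato identifies $K_2^M(F)/p$ with $\Omega^2_{F,\log}\subseteq \Omega^2_F$, and $\Omega^2_F=\wedge^2_F\,\Omega^1_F=0$ because a one-element $p$-basis makes $\Omega^1_F$ one-dimensional over $F$) is the standard one and is complete as written.
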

\begin{theorem}[\cite{MR4082254}*{Theorem 3.4}]
\label{symbolk2}
    Let $F$ be a field of characteristic $p>0$ and $[F:F^p]=p^n, 2\leq n\leq 3$. Assume that $F$ does not admit any finite extension of degree prime to $p$. Then
    \[
    \len{K_2^M(F)/p}\leq \left\{\begin{array}{cl}
         1&  n=2,\\
        3 &  n=3.
    \end{array}
    \right.
    \]
\end{theorem}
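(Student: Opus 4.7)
The plan is to move the question to the world of logarithmic differential forms via the Bloch--Gabber--Kato theorem. Recall that the differential symbol gives an isomorphism $\phi^2_F : K_2^M(F)/p \xrightarrow{\sim} \Omega^2_{F,\log}$ sending $\{a,b\}$ to $(da/a)\wedge(db/b)$; the symbol length of a class is then the minimal number of decomposable logarithmic $2$-forms needed to represent it. Fix a $p$-basis $\{x_1,\ldots,x_n\}$ of $F$ over $F^p$, so that $\Omega^1_F$ is a free $F$-module of rank $n$ on $dx_1,\ldots,dx_n$ and $\Omega^2_F$ is free of rank $\binom{n}{2}$ on the wedges $dx_i\wedge dx_j$. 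Since $\{a,b^p\}=p\{a,b\}=0$ in $K_2^M(F)/p$, I am free to replace each slot of a symbol by its class modulo $F^{\times p}$, which lets me normalize $d\log a$ with respect to the $p$-basis.

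For $n=2$, $\Omega^2_F = F\cdot dx_1\wedge dx_2$ is one-dimensional over $F$, so every element of $\Omega^2_{F,\log}$ has the form $c\,\frac{dx_1}{x_1}\wedge \frac{dx_2}{x_2}$ for a uniquely determined $c\in F$. I would argue that such an element is already a \emph{single} pure wedge $d\log u \wedge d\log v$: given an arbitrary sum $\omega = \sum_i d\log a_i \wedge d\log b_i$, compute $c$ as an alternating sum of $2{\times}2$ Jacobian determinants in the partial derivatives of the $a_i,b_i$ with respect to $x_1,x_2$, and then exhibit $u,v\in F^\times$ realizing this $c$. This reduces to solving a polynomial equation of degree coprime to $p$, and the hypothesis that $F$ has no finite extension of degree prime to $p$ guarantees such a solution in $F$ itself. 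Thus $\len{K_2^M(F)/p}\leq 1$ in the case $n=2$.

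For $n=3$, I would take the $n=2$ result as a black box and proceed by a common-slot strategy. Given $\omega=\sum_{i=1}^{m}\{a_i,b_i\}$, project the corresponding log-form onto the three coordinate planes spanned by $\{dx_1\wedge dx_2,\, dx_1\wedge dx_3,\, dx_2\wedge dx_3\}$. Each planar component can be viewed through a suitable subextension with $p$-basis of size $2$, where the $n=2$ argument applies, so that component is realizable as a single symbol of the form $\{u,v\}$ with $v$ built from two of the $x_i$. This produces a three-symbol representative of $\omega$, yielding $\len{K_2^M(F)/p}\leq 3$. Throughout, the no-prime-to-$p$-extension hypothesis is used to extract $p$-th roots of the polynomial combinations of the data that arise when reducing within each plane.

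The hard part is the $n=3$ step: executing the planar decomposition without letting cross-terms spawn new symbols. Concretely, when one combines symbols within the $\langle dx_1,dx_2\rangle$ plane by the $n=2$ technique, the replacement may introduce unwanted contributions to the other two planes, and one must verify that iterating the reduction across the three planes is convergent and terminates with at most three surviving symbols. Organizing this bookkeeping (and invoking the prime-to-$p$-closedness hypothesis at each root-extraction) is where the real work lies, and is the mechanism that produces precisely the bound $3$ rather than something weaker.
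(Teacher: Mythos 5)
The paper does not actually prove this statement: it is imported wholesale from Bhaskhar--Haase (\cite{MR4082254}*{Theorem 3.4}), and the only commentary added is that the engine of that proof is Kato's linear-algebra lemma (\cite{MR689394}*{Section 1, Lemma 3}, restated just below the theorem), which is also why the hypothesis can be weakened to $F=F^{p-1}$. So the comparison is between your sketch and the cited proof, and your sketch has genuine gaps in both cases. For $n=2$, the reduction via Bloch--Gabber--Kato to showing that every logarithmic class $c\,\frac{dx_1}{x_1}\wedge\frac{dx_2}{x_2}$ is a single pure wedge $\frac{du}{u}\wedge\frac{dv}{v}$ is fine, but the step that does all the work is replaced by the assertion that one can ``exhibit $u,v$ realizing this $c$'' by ``solving a polynomial equation of degree coprime to $p$.'' Which equation, and why its solvability yields the required common-slot presentation, is precisely the content of the theorem; in the cited proof this is exactly where Kato's lemma enters (applied to a purely inseparable degree-$p$ extension $E/F$ and an $F$-linear functional $g$, producing $c\neq 0$ with $g(c^i)=0$ for $1\le i\le p-1$), and nothing in your sketch substitutes for it.

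The $n=3$ step is where the proposal actually breaks. The projection of an element of $\Omega^2_{F,\log}$ onto a coordinate plane $F\cdot dx_i\wedge dx_j$ is merely an element of $\Omega^2_F$; there is no reason for it to be logarithmic, let alone a single symbol, and there is no subfield of $p$-rank $2$ to which the $n=2$ case applies, since the coefficient of the projection lives in all of $F$ rather than in $F^p(x_i,x_j)$. You concede that controlling the cross-terms generated by re-expressing one planar component ``is where the real work lies,'' but that work is the theorem: as written, the planar decomposition does not terminate in three symbols, and no mechanism is given to make it do so. The cited proof does not decompose along a basis of $\Omega^2_F$ at all; it reduces the number of symbols in an arbitrary sum by manufacturing common slots via the same Kato lemma. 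To repair your approach you would either have to prove a convergence statement for the iterated planar reductions (which is not plausible in the form stated) or abandon the projection idea in favor of the common-slot reduction.
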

Notice that the assumption that $F$ does not admit any finite extension of degree prime to $p$ can be weakened to $F=F^{p-1}:=\{x^{p-1}\mid x\in F\}$. In fact, the key lemma in the proof of Theorem \ref{symbolk2} is the following.
\begin{lemma}[\cite{MR689394}*{Section 1, Lemma 3}, \cite{MR1715874}*{Lemma 3.2}]
    Let $F$ be a field of characteristic $p>0$ and $E$ a purely inseparable extension of degree $p$ of $F$. Assume $F=F^{p-1}$. Let $g:E\rightarrow F$ be a $F$-linear map. Then there exists a nonzero element $c\in E$ such that $g(c^i)=0$ for $i=1,\cdots,p-1$.
\end{lemma}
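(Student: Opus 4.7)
The plan is to convert the existence of $c$ into the solvability of an explicit homogeneous polynomial system over $F$ and then use the hypothesis $F = F^{p-1}$ to produce a nontrivial zero. First I dispose of the easy cases: if $g$ is the zero map then any nonzero $c \in E$ works, and if $g \neq 0$ but $g(1) = 0$ then $c = 1$ suffices since $g(1^i) = g(1) = 0$. So I may assume $g(1) \neq 0$, and after rescaling $g$, take $g(1) = 1$. Setting $V := \ker g$, the hyperplane $V$ does not contain $1$ and $E = F \cdot 1 \oplus V$, so $g$ is identified with the projection onto the first summand.

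Next, fix an $F$-basis $e_1, \dots, e_{p-1}$ of $V$ and parametrise $c \in V$ by $c = \sum_{i=1}^{p-1} x_i e_i$ with $x_i \in F$. Then $g(c) = 0$ holds automatically, and for $j \geq 2$, $P_j(x_1, \dots, x_{p-1}) := g(c^j)$ is a homogeneous polynomial of degree $j$ in the $x_i$, whose coefficients in $F$ are computed from the values of $g$ on monomials in the $e_i$. The task reduces to finding a nontrivial common zero in $F^{p-1}$ of the $p-2$ forms $P_2, P_3, \dots, P_{p-1}$.

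To solve this system, I would invoke $F = F^{p-1}$, which is equivalent to $F = F^d$ for every $d \mid p-1$. Note that Chevalley--Warning is not directly useful, since the sum of degrees $\sum_{j=2}^{p-1} j = \frac{p(p-1)}{2} - 1$ already exceeds $p-1$ for $p \geq 3$. The crucial extra ingredient is that $c^p \in F$ for every $c \in E$, valid because $E/F$ is purely inseparable of degree $p$: concretely, $c^p = \sum_i x_i^p e_i^p$ with each $e_i^p \in F$. This identity ties the top polynomial $P_{p-1}$ (morally $c^p/c$) to the lower ones and enables an inductive reduction in which at each stage one eliminates a variable by solving an equation of degree dividing $p-1$, which is solvable in $F$ precisely because $F = F^{p-1}$.

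The main obstacle is carrying out this induction cleanly. The combinatorial book-keeping---verifying that after each elimination the residual system retains the required shape, and that the next step really reduces to a pure $d$-th power equation for some $d \mid p-1$---is delicate; it forms the content of Revoy's argument in \cite{MR689394}*{Section 1, Lemma 3} and of Lemma 3.2 of Aravire--Jacob \cite{MR1715874}, so for the purposes of the present paper it suffices to invoke those references. As sanity checks, for $p = 2$ the statement is trivial (the hypothesis is vacuous and any nonzero element of $\ker g$ works), and for $p = 3$ it reduces to solvability of the single binary quadratic form $P_2$ over $F$, which holds precisely because $F = F^2$.
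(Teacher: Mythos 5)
The paper does not actually prove this lemma: it is imported verbatim from Kato \cite{MR689394} and Colliot-Th\'el\`ene \cite{MR1715874} and used as a black box, so there is no internal argument to compare against. Judged on its own terms, your proposal has a genuine gap. The reductions you perform (normalizing $g(1)=1$, restricting $c$ to $V=\ker g$, rewriting $g(c^j)=0$ for $2\le j\le p-1$ as forms $P_2,\dots,P_{p-1}$ of degrees $2,\dots,p-1$ in $p-1$ variables) are correct but purely formal; the entire content of the lemma is the existence of a nontrivial common zero of that system, and that is exactly the step you do not carry out. The proposed mechanism --- an inductive elimination in which each stage ``reduces to a pure $d$-th power equation for some $d\mid p-1$'' --- is asserted, not established, and it carries all the weight: the hypothesis $F=F^{p-1}$ only guarantees solvability of pure power equations $x^d=a$, not of general degree-$d$ equations, so unless the elimination demonstrably produces pure power equations nothing follows. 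The one structural input you name, $c^p\in F$, merely converts $P_{p-1}=0$ into $g(c^{-1})=0$ and does not visibly start any cascade. (A side remark: the argument you defer to is Kato's; \cite{MR689394} is Kato's Oberwolfach paper, not Revoy's.)

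Moreover, the affine-system route is likely not salvageable as stated, because it discards the multiplicative structure that the cited proofs exploit. The standard argument writes $E=F(b)$, identifies $\mathrm{Hom}_F(E,F)$ with $E$ as a one-dimensional $E$-vector space via the residue functional $s\bigl(\sum_{i=0}^{p-1}\lambda_i b^i\bigr)=\lambda_{p-1}$, so that $g(x)=s(ux)$ for a single $u\in E$, and observes that a suitable $c\notin F$ works if and only if $\partial c/c\in F^{\times}u$, where $\partial=d/db$. Hochschild's formula $(\partial+a)^p=\partial^p+a^p+\partial^{p-1}(a)$ then characterizes logarithmic derivatives by the single additive equation $w^p+\partial^{p-1}(w)=0$; substituting $w=\lambda u$ with $\lambda\in F$ and using $u^p\in F$, $\partial^{p-1}(u)=-s(u)$, this collapses the whole system to the one equation $\lambda^{p-1}=s(u)/u^p$, which is solvable precisely because $F=F^{p-1}$ (and the degenerate case $s(u)=g(1)=0$ is handled by $c=1$, as you note). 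Your sanity checks for $p=2$ and $p=3$ are correct, but for $p\ge 5$ your sketch contains no proof; you should either reproduce the argument above from the references or simply cite the lemma as the paper does, rather than presenting the polynomial-system strategy as if it were known to succeed.
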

When $p=2$, the condition $F=F^{p-1}$ is naturally satisfied. Therefore, we have the following corollary.

\begin{corollary}
     Let $F$ be a field of characteristic $p=2$ and $[F:F^p]=p^n,2\leq n\leq 3$. Then
    \[
    \len{K_2^M(F)/p}\leq \left\{\begin{array}{cl}
         1&  n=2,\\
        3 &  n=3.
    \end{array}
    \right.
    \]
    \label{corollaryk2}
\end{corollary}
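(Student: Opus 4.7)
The proof should be essentially a one-line reduction to Theorem \ref{symbolk2} once one makes explicit the weakening of hypotheses noted just above the corollary. My plan is as follows. First, I would confirm (by inspecting the proof of Theorem \ref{symbolk2} in \cite{MR4082254}) that the assumption "\emph{$F$ admits no finite extension of degree prime to $p$}" enters the argument solely through the cited lemma from \cite{MR689394}*{Section 1, Lemma 3}, which as stated in the excerpt requires only the weaker condition $F = F^{p-1}$. Granting this, Theorem \ref{symbolk2} remains valid with its hypothesis replaced by $F = F^{p-1}$.

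Second, I would specialize to $p = 2$. Here $p - 1 = 1$, so $F^{p-1} = F^1 = F$ trivially, and the weakened hypothesis is automatic. Thus the (reformulated) Theorem \ref{symbolk2} applies to every field $F$ of characteristic $2$ with $[F : F^p] = p^n$ for $2 \leq n \leq 3$, and reading off the corresponding bounds gives $\len{K_2^M(F)/p} \leq 1$ when $n = 2$ and $\len{K_2^M(F)/p} \leq 3$ when $n = 3$, as claimed.

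The only real content of the argument is the bookkeeping in the first step, namely tracing through the proof of Theorem \ref{symbolk2} to verify that the prime-to-$p$ hypothesis is never used except to invoke the stated linear-algebra lemma. The preceding paragraph in the excerpt already asserts this, so in practice the corollary is immediate; there is no genuine obstacle, and the entire proof can be written in two or three lines by simply quoting the weakened form of Theorem \ref{symbolk2} and observing that $F^{p-1} = F$ in characteristic $2$.
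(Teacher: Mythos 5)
Your proposal is correct and matches the paper's own argument exactly: the paper observes that the prime-to-$p$ hypothesis in Theorem \ref{symbolk2} enters only through the cited lemma of Kato, which needs only $F=F^{p-1}$, and that this condition is automatic when $p=2$. Nothing further is required.
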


For the general symbol length bound of $K_2(F)/p$ in terms of the $p$-rank of $F$, we make the following conjecture.
\begin{conjecture}
Let $F$ be a field of characteristic $p>0$ and $[F:F^p]=p^n$ for $n\in \mathbb{N}_{>0}.$ Assume one of the following:
    \begin{itemize}
        \item[(\romannumeral 1)] $F$ does not admit any finite extension of degree prime to $p$;
        \item[(\romannumeral 2)] $p=2$.
    \end{itemize}
    Then it follows that $\text{len}(K_2(F)/p)\leq n-1$.
    \label{prankconj2}
\end{conjecture}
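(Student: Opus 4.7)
The natural approach is induction on the $p$-rank $n$. The base case $n=1$ is handled by the lemma stating that $K_2^M(F)/p = 0$ when $[F:F^p] = p$, which gives length zero, i.e.\ $\leq n-1$. For the inductive step, I would proceed as follows.

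First, I would use the Bloch--Gabber--Kato isomorphism $K_2^M(F)/p \cong \Omega^2_{F,\log}$ to translate symbol-length questions into decompositions of logarithmic $2$-forms into sums of wedges $\frac{da}{a}\wedge\frac{db}{b}$. Fix a $p$-basis $x_1,\dots,x_n$ of $F$ over $F^p$ and single out $x_n$. Since every $a\in F^\times$ can be written (modulo $(F^\times)^p$) in the form $a=x_n^e u$ with $e\in\{0,\dots,p-1\}$ and $u$ in the $F^p$-subring generated by $x_1,\dots,x_{n-1}$, each symbol $\{a,b\}$ decomposes into an ``outer'' part pairing something against $x_n$ and an ``inner'' part depending only on $x_1,\dots,x_{n-1}$.

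Given $\alpha=\sum_{i=1}^{m}\{a_i,b_i\}\in K_2^M(F)/p$, the plan is to split off the $x_n$-part and then apply the key linear-algebra lemma of Bhaskhar--Haase (the lemma following Theorem~\ref{symbolk2}) to the $F$-linear maps from the purely inseparable extension $F(x_n^{1/p})$ determined by the coefficients of $dx_n/x_n$. The hypothesis $F = F^{p-1}$ (forced by (i), and automatic in case (ii)) is exactly what is needed to produce the element $c$ collapsing multiple $x_n$-symbols into a single one. The remaining inner contribution, after this collapsing, should lie in the image of a Milnor $K$-group attached to a field of $p$-rank $n-1$, where the inductive hypothesis provides a bound of $n-2$ symbols. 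Adding one symbol for the $x_n$-part yields the desired total of $n-1$.

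The main obstacle is to arrange the reduction so that it actually achieves $n-1$ rather than the weaker linear-in-$n$ bound obtained by a naive iteration. Indeed, for $n=3$ the known result (Theorem~\ref{symbolk2}) yields only length $\leq 3$, while the conjecture predicts $\leq 2$; a single pass through the Bhaskhar--Haase reduction therefore cannot suffice. A sharper bound requires extra cancellations coming from the Steinberg relation $\{a,1-a\}=0$ in $K_2^M(F)/p$ that are invisible inside the intermediate field of rank $n-1$ and only become accessible when one works within $F$ itself. Making these Steinberg cancellations effective, and carrying them out simultaneously with the choice of $c$ from the key lemma, is where the real difficulty lies; this is presumably why the conjecture remains open in all cases with $n\geq 4$.
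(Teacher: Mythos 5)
The statement you set out to prove is presented in the paper as an open conjecture (Conjecture~\ref{prankconj2}), motivated by the Parimala--Suresh lemmas and supported only by the known cases of small $p$-rank; the paper contains no proof for you to be compared against. So the relevant question is whether your proposal actually closes the conjecture, and it does not: you have a correct base case ($K_2^M(F)/p=0$ when $[F:F^p]=p$), a correct observation that hypotheses (i) and (ii) each force $F=F^{p-1}$ (which is what the Kato/Colliot-Th\'el\`ene lemma quoted after Theorem~\ref{symbolk2} requires), and a reasonable sketch of the Bhaskhar--Haase reduction, but the inductive step is precisely the part that is missing. The crucial unproved assertion is that, after using the key lemma to collapse the $dx_n/x_n$-components into a single symbol, the remaining ``inner'' contribution lies in (the image of) $K_2^M$ of a field of $p$-rank $n-1$, so that induction applies. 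If that were true, one would immediately get $1+(n-2)=n-1$ for all $n$; the fact that the best known bound for $n=3$ is $3$ rather than $2$ (Theorem~\ref{symbolk2}, Corollary~\ref{corollaryk2}) shows that this clean splitting fails --- the collapsing step produces cross terms that do not live over a smaller field, and no mechanism in your sketch controls them.

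To your credit, you identify this obstacle explicitly and point to Steinberg-relation cancellations as the missing ingredient, but naming the difficulty is not the same as resolving it: no construction is given for how those cancellations are to be carried out ``simultaneously with the choice of $c$,'' and your own closing sentence concedes that the statement remains open for $n\geq 4$ (indeed it is open already for $n=3$, where your plan would need to improve the known bound from $3$ to $2$). As written, the proposal is a plausible research programme rather than a proof, and it should not be presented as establishing the conjecture.
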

We should point out that the conjecture is motivated by \cite{MR3219517}*{Lemma 1.3, Lemma 1.4}.
\subsection{Symbol length of \texorpdfstring{$\Omega^1_F/Z^1_F$}{1}}
\begin{definition}[Symbol length in $\Omega^1_F/Z^1_F$]
    Let $F$ be a field of characteristic $p>0$. Let $\alpha\in \Omega^1_F/Z^1_F$. The symbol length $\text{len}(\alpha)$ of $\alpha$ in $\Omega^1_F/Z^1_F$ is defined to be the minimal integer $m$ such that $\alpha=a_1db_1+\cdots+a_mdb_m$ in $\Omega^1_F/Z^1_F$ for some $a_i, b_j \in F$.

    Then we define the symbol length of $\Omega^1_F/Z^1_F$ by
    \[
    \len{\Omega^1_F/Z^1_F}:=\sup\limits_{\alpha}\{\len{\alpha}\}.
    \]
\end{definition}
The symbol length of $\Omega^1_F/Z^1_F$ is clearly controlled by the $p$-rank of $F$. If the $p$-rank of $F$ is $1$, i.e. $[F:F^p]=p$, we have that $\Omega^1_F/Z^1_F=0$, since there is no nontrivial $2$-form over $F$ and every $1$-form over $F$ is closed. Meanwhile, if the $p$-rank of $F$ is $n$, the symbol length of $\Omega^1_F/Z^1_F$ is no more than $n$. 

Following the observation for the case $[F:F^p]=p$, we make the following conjecture.
\begin{conjecture}
Let $F$ be a field of characteristic $p>0$ and $[F:F^p]=p^n$ for $n\in \mathbb{N}_{>0}.$ Assume one of the following:
    \begin{itemize}
        \item[(\romannumeral 1)] $F$ does not admit any finite extension of degree prime to $p$;
        \item[(\romannumeral 2)] $p=2$.
    \end{itemize} Then it follows that $\text{len}(\Omega^1_F/Z^1_F)\leq n-1$.
\label{prankconj1}
\end{conjecture}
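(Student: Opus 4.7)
The plan is to proceed by induction on the $p$-rank $n$. The base case $n=1$ is immediate: $[F:F^p]=p$ forces $\Omega^2_F=0$, hence every $1$-form is closed and $\Omega^1_F/Z^1_F=0$, so the bound $n-1=0$ holds vacuously. For the inductive step I would fix a $p$-basis $b_1,\dots,b_n$ of $F$ and set $E=F^p(b_1,\dots,b_{n-1})$, so that $b_1,\dots,b_{n-1}$ is a $p$-basis of $E$ and $F=E(b_n)$ is purely inseparable of degree $p$. Writing a general form as $\omega=\sum_{i=1}^n a_i\,db_i$, the task reduces to showing that modulo $Z^1_F$ one can eliminate the $db_n$-component entirely, after which $\omega$ is expressed using at most $n-1$ symbols.

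The two relations in $Z^1_F$ supplied by Cartier's theorem drive the reduction: the Leibniz identity $\alpha\,d\beta\equiv -\beta\,d\alpha\pmod{Z^1_F}$, coming from exactness of $d(\alpha\beta)$, and the log relation $c^p\,db/b\equiv 0\pmod{Z^1_F}$ for $c\in F$ and $b\in F^\times$. Expanding $a_n=\sum_{k=0}^{p-1}\alpha_k b_n^k$ in the $E$-basis $\{1,b_n,\dots,b_n^{p-1}\}$ of $F$, the contributions with $k\in\{0,\dots,p-2\}$ can be moved into the $db_1,\dots,db_{n-1}$ directions via
\[
\alpha_k b_n^k\,db_n \;=\; \tfrac{1}{k+1}\,d(\alpha_k b_n^{k+1})\;-\;\tfrac{b_n^{k+1}}{k+1}\,d\alpha_k,
\]
because $k+1\not\equiv 0\pmod p$ and, for $\alpha_k\in E$, the differential $d\alpha_k$ lies in the $F$-span of $db_1,\dots,db_{n-1}$. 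The only term that resists this manipulation is the top piece $\alpha_{p-1}b_n^{p-1}\,db_n=\alpha_{p-1}b_n^p\cdot d\log b_n$, which would be killed outright by the Cartier log relation if $\alpha_{p-1}b_n^p$ were a $p$-th power in $F$, but generically is not.

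The main obstacle is precisely this $k=p-1$ term. To address it I would invoke the key lemma of \cite{MR689394, MR1715874} that already drives Theorem \ref{symbolk2}: either hypothesis (i) or (ii) implies $F=F^{p-1}$ (for (ii) this is vacuous, and for (i) it follows from splitting $X^{p-1}-a$ in an extension of degree prime to $p$), and hence for every $E$-linear functional $g:F\to E$ there exists a nonzero $c\in F$ with $g(c^i)=0$ for $i=1,\dots,p-1$. The plan is to apply this lemma to a functional encoding the $F^p$-coordinates of $\alpha_{p-1}b_n^p$ along the $b_1^{e_1}\cdots b_{n-1}^{e_{n-1}}$ components that obstruct its being a $p$-th power, so that after replacing $b_n$ by $cb_n$ and correcting by a Cartier log term $\gamma^p\,d\log(cb_n)\in Z^1_F$ and one more round of Leibniz, all surviving contributions land in the $db_1,\dots,db_{n-1}$ directions. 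The $(p,n)=(2,2)$ case of this procedure is exactly Lemma \ref{p2n2}, and I expect the full proof to be a careful iteration of the same mechanism. The principal technical burden is the bookkeeping: tracking how the Leibniz identities and the log relations propagate through the $E$-decomposition of $a_n$ without reintroducing $db_n$-components from derivatives of the replacement coefficients, and verifying that the single functional fed into the key lemma faithfully captures all the simultaneous obstructions produced by the $p-1$ residual reductions.
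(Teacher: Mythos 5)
There is no proof of this statement in the paper to compare against: it is stated as Conjecture \ref{prankconj1} and left open, with only the trivial case $n=1$ and the case $(p,n)=(2,2)$ (Lemma \ref{p2n2}) actually established. So your proposal is an attempt at an open problem, and unfortunately its central reduction is false. You claim that, modulo $Z^1_F$, one can eliminate the $db_n$-component entirely and land in the $F$-span of $db_1,\dots,db_{n-1}$. Since $Z^1_F=\ker(d\colon\Omega^1_F\to\Omega^2_F)$, the differential induces an injection $\Omega^1_F/Z^1_F\hookrightarrow\Omega^2_F$, so $\omega\equiv\sum_{i<n}a_i'\,db_i\ (\mathrm{mod}\ Z^1_F)$ would force $d\omega=\sum_{i<n}da_i'\wedge db_i$. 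Take $p=2$, $n=2$, $F=\mathbb{F}_2(s,t)$, $b_1=s$, $b_2=t$, and $\omega=s\,\dlog{t}=(s/t)\,dt$. Then $d\omega=(1/t)\,ds\wedge dt$, whereas for any $a=a_{00}^2+a_{10}^2s+a_{01}^2t+a_{11}^2st$ one has $da\wedge ds=(a_{01}^2+a_{11}^2s)\,dt\wedge ds$; comparing coordinates in the $F^2$-basis $\{1,s,t,st\}$, the coefficient $1/t=(1/t)^2\,t$ is supported on the $t$-component while $a_{01}^2+a_{11}^2s$ never is, so no such $a$ exists. Thus the class of $s\,\dlog{t}$ has no representative supported on $ds$ alone, even though it trivially has symbol length $1$ (with $b=t$). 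Your obstruction term $\alpha_{p-1}b_n^{p-1}\,db_n$ is exactly this class, and no rescaling $b_n\mapsto cb_n$ together with Cartier log corrections can move it into the $db_1,\dots,db_{n-1}$ directions.

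The correct target is strictly weaker than your reduction: one must produce $n-1$ symbols $a_i\,db_i'$ with \emph{new} elements $b_i'\in F$, not symbols supported on a prescribed sub-$p$-basis. The paper's own Lemma \ref{p2n2} is the tell: its output \eqref{symbolexpression} is a single symbol $a\,db$ with $b=\lambda t+st$ a fresh element mixing both $p$-basis directions, found by solving a quadratic system in the $F^2$-coordinates rather than by eliminating a direction. Your appeal to the Kato--Colliot-Th\'el\`ene lemma is also left unspecified (which functional, on which degree-$p$ subextension), and that lemma is used in \cite{MR4082254} for $K_2^M(F)/p$, where the symbol $\{a,b\}$ is bilinear and alternating; the group $\Omega^1_F/Z^1_F$ with symbols $a\,db$ ($a$-linear but only derivation-like in $b$) does not obviously admit the same argument. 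The parts of your plan that do survive are the base case, the observation that hypotheses (i) or (ii) give $F=F^{p-1}$, and the Leibniz reduction of the $k\le p-2$ terms; but the $k=p-1$ term is the whole problem, and the proposal as written does not contain a mechanism that handles it.
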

The following proposition gives us the hint to make the conjecture. 
\begin{proposition}
    Let $F$ be a field of characteristic $p>0$ and $[F:F^p]=p^n,n\in \mathbb{N}_{>0}$. Suppose $\alpha\in \Omega^1_F/Z^1_F$. Then there exists a degree $p^{n-1}$ inseparable field extension $E/F$ such that $[\alpha_E] = 0$ in $\Omega^1_E/Z^1_E$.
\end{proposition}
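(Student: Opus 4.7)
The plan is to exhibit the degree-$p^{n-1}$ extension explicitly by adjoining $p$-th roots of all but one element of a chosen $p$-basis of $F$, and then to verify that once all but one $dx_i$ is killed, the remaining summand is automatically closed because its coefficient is inherited from $F$.

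First I fix a $p$-basis $\{x_1,\ldots,x_n\}$ of $F$, available since $[F:F^p]=p^n$, so that $\Omega^1_F$ is the free $F$-module on $dx_1,\ldots,dx_n$, and lift $\alpha$ to $\sum_{i=1}^n a_i\,dx_i\in \Omega^1_F$. I then set $E=F(y_2,\ldots,y_n)$ with $y_i^p=x_i$, which is purely inseparable of degree $p^{n-1}$ (by repeated use of the irreducibility of $T^p-x_i$ over $F(y_2,\ldots,y_{i-1})$, following from the $p$-independence of $x_2,\ldots,x_n$ modulo $F^p$). A direct count, using $E^p=F^p(x_2,\ldots,x_n)$ and multiplicativity of degrees, gives $[E:E^p]=p^n$, and $\{y_2,\ldots,y_n,x_1\}$ is a $p$-basis for $E$. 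Consequently $\Omega^1_E$ is free on $dy_2,\ldots,dy_n,dx_1$, and since $dx_i=d(y_i^p)=0$ for $i\ge 2$, the lift of $\alpha$ collapses in $\Omega^1_E$ to $\alpha_E = a_1\,dx_1$.

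It remains to prove that $a_1\,dx_1$ is closed, equivalently that $\partial a_1/\partial y_i=0$ for $i\ge 2$ in the derivation basis dual to the $p$-basis of $E$. This is the one calculation that matters: expanding $a_1$ in the $p$-basis of $F$ as $a_1=\sum_I b_I^p\,x_1^{i_1}\cdots x_n^{i_n}$ with $b_I\in F$ and $0\le i_j\le p-1$, the substitution $x_j=y_j^p$ (for $j\ge 2$) converts each $x_j^{i_j}$ into $y_j^{p i_j}$, so that each term becomes $(b_I\,y_2^{i_2}\cdots y_n^{i_n})^p\cdot x_1^{i_1}$. Grouping by $i_1$ yields $a_1=\sum_{i_1=0}^{p-1} C_{i_1}\,x_1^{i_1}$ with $C_{i_1}\in E^p$, so the expansion of $a_1$ in the $p$-basis of $E$ involves no $y_i$-monomial, and the $y_i$-partials vanish. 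Hence $da_1\wedge dx_1=0$, and $[\alpha_E]=0$ in $\Omega^1_E/Z^1_E$; this is independent of the lift of $\alpha$ since $Z^1_F$ pulls back into $Z^1_E$.

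The only delicate point is the correct identification of the $p$-basis of $E$, after which the conclusion is essentially forced: the $x_j$'s with $j\ge 2$ automatically become $p$-th powers in $E$, and therefore so do their contributions to $a_1$, which the derivation $d$ on $E$ cannot see.
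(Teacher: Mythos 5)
Your proof is correct and follows essentially the same route as the paper: adjoin $p$-th roots of $n-1$ members of a $p$-basis, observe that the image of $\alpha$ collapses to a single term $a_1\,dx_1$ with $a_1\in E^p[x_1]$, and conclude that this form is closed. The only (harmless) difference is in the last step, where the paper splits $a_1\,dx_1$ into exact terms plus a term handled by Cartier's description of $Z^1_E$, while you verify closedness directly by checking that the partial derivatives of $a_1$ along $y_2,\dots,y_n$ vanish — a slightly more uniform computation that reaches the same conclusion.
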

\begin{proof}
Since $[F:F^p]=p^{n}$, there exists a $p$-basis of $F$ given by $\{x_1,\cdots,x_{n}\}$ for some $x_i\in F$. We have $\alpha = \sum\limits_{i=1}^{n} f_i dx_i$ for some $f_i\in F$. Let $E = F[t_1,\cdots,t_{n-1}]/(t_1^p-x_1,\cdots,t^p_{n-1}-x_{n-1})$. It follows that $\alpha_{E} = f_ndx_n$, where $f_n = \sum\limits_{j=0}^{p-1}g^p_j x_n^j$. For $j\neq p-1$, $g_j^px^j_n dx_n = d(\dfrac{g_j^p x^{j+1}_n}{j+1})\in Z^1_E$. When $j=p-1$, $g^p_{p-1}x^p\dlog{x}\in Z^1_E$ by Cartier's isomorphism. Hence, $[\alpha_E] = 0 $ in $\Omega^1_E/Z^1_E$.
\end{proof}
In addition to case $[F:F^p]=p$, we give evidence for Conjecture \ref{prankconj1} in the case $p=2$ and $n=2$.

\begin{lemma}
    Let $F$ be a field of characteristic $p=2$ and $[F:F^p]=p^2$. Then $\text{len}(\Omega^1_F/Z^1_F)=1$.
    \label{p2n2}
\end{lemma}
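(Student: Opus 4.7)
The plan is to identify $\Omega^1_F/Z^1_F$ with the image $B^2_F = d\Omega^1_F \subset \Omega^2_F$ via the differential $d$ (whose kernel is $Z^1_F$ by definition), and then to show that every element of $B^2_F$ can be realized in the form $da \wedge db$ for some $a, b \in F$. Since $[F:F^p] = p^2$, any $p$-basis $\{x, y\}$ gives $\Omega^1_F = F\, dx \oplus F\, dy$ and $\Omega^2_F = F \cdot dx \wedge dy$, which is one-dimensional over $F$; the derivations $\partial/\partial x, \partial/\partial y$ dual to the $p$-basis and the $F^p$-basis $\{1, x, y, xy\}$ of $F$ are the basic computational tools.

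The first step is to describe $B^2_F$ explicitly. For $\alpha = f\, dx + g\, dy$, one has $d\alpha = (\partial g/\partial x - \partial f/\partial y)\, dx \wedge dy$, and expanding $f, g$ in the $F^p$-basis shows that $\partial f/\partial y$ only contributes terms in the span of $\{1, x\}$ while $\partial g/\partial x$ only contributes terms in the span of $\{1, y\}$. Hence $B^2_F = (F^p + F^p x + F^p y) \cdot dx \wedge dy$, and conversely every such element is attained. The second step is the analogous calculation for a single symbol: writing $a = \sum a_{ij}^p x^i y^j$ and $b = \sum b_{ij}^p x^i y^j$ (summing over $i,j \in \{0,1\}$), one computes that $d(a\, db) = da \wedge db$ again has coefficient in $F^p + F^p x + F^p y$, the $xy$-terms cancelling in characteristic $2$. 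Matching against $d\alpha = (h_0^p + h_1^p x + h_2^p y)\, dx \wedge dy$ reduces the problem to solving, for arbitrary $h_0, h_1, h_2 \in F$, the bilinear system
\begin{align*}
a_{10}b_{01} + a_{01}b_{10} &= h_0,\\
a_{10}b_{11} + a_{11}b_{10} &= h_1,\\
a_{11}b_{01} + a_{01}b_{11} &= h_2
\end{align*}
in the unknowns $a_{ij}, b_{ij} \in F$.

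The main (and essentially only) obstacle is verifying that this three-equation, six-unknown bilinear system is solvable over $F$ for every right-hand side. I plan to dispatch it by the normalization $a_{01} = 1,\ b_{01} = 0$, which forces $b_{10} = h_0$ and $b_{11} = h_2$ and leaves the single linear constraint $a_{10}h_2 + a_{11}h_0 = h_1$; this is solvable whenever $(h_0, h_2) \neq (0, 0)$, and the remaining degenerate case is handled by the symmetric normalization $a_{10} = 1,\ a_{01} = 0$. Once such $a, b$ are produced we get $d(\alpha - a\, db) = 0$, hence $\alpha \equiv a\, db \pmod{Z^1_F}$, giving the upper bound $\text{len}(\Omega^1_F/Z^1_F) \leq 1$. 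For the lower bound, the class of $y\, dx$ is nonzero in $\Omega^1_F/Z^1_F$ because $d(y\, dx) = dy \wedge dx = dx \wedge dy \neq 0$ in $\Omega^2_F$, so the symbol length is exactly $1$.
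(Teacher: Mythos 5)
Your proof is correct, and it reaches the same bilinear system of three equations in the six unknowns $a_{ij},b_{ij}$ that the paper's proof solves, but it gets there by a cleaner route. The paper works directly in $\Omega^1_F/Z^1_F$: it expands $\alpha=f\,\dlog{s}+g\,\dlog{t}$ and $a\,db$ in the $F^p$-basis and discards the closed terms by hand (implicitly via Cartier's description of $Z^1_F$), then matches the surviving coefficients. You instead apply $d$ and use the exact identification $\Omega^1_F/Z^1_F\cong B^2_F=d\Omega^1_F\subset\Omega^2_F$, so that ``equality modulo closed forms'' becomes honest equality of coefficients of $dx\wedge dy$; the computation of $B^2_F=(F^p+F^px+F^py)\,dx\wedge dy$ and the cancellation of the $xy$-terms in $da\wedge db$ then do the same work with no bookkeeping of which terms are exact. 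Your normalization $a_{01}=1,\ b_{01}=0$ (with the symmetric one for the degenerate case $(h_0,h_2)=(0,0)$) plays exactly the role of the paper's explicit solution with $a_{11}=0,\ b_{11}=1$, which divides by $f_{01}$ and likewise defers the case $f_{01}=0$ to ``the other case follows similarly''; both case splits are equally harmless. Two small points in your favor: you explicitly verify the lower bound $\mathrm{len}\geq 1$ by exhibiting the non-closed form $y\,dx$, which the paper leaves implicit, and your identification with $B^2_F$ makes it transparent why only the three coefficients $h_0,h_1,h_2$ need to be matched.
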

\begin{proof}
$ $\\
    Since  $[F:F^p]=p^2$, there exist $s,t\in F$ such that the set $\{s^i t^j\}_{(i,j)}$ is  a basis for $F$ as an $F^p$-vector space.

    Let $\alpha\in \Omega^1_F/Z^1_F$. Then for some $f, g \in F$, we get the following modulo $Z^1_F$: 
    \begin{align*}
     \alpha&=f\dlog{s}+g\dlog{t}\\
     &=\Bigl(\sum_{0\leq i,j\leq p-1}f_{ij}^p s^i t^j\dlog{s}\Bigr)+\Bigl(\sum_{0\leq i,j\leq p-1}g_{ij}^p s^i t^j\dlog{t}\Big)\\
     &=\Bigl(f^2_{01}t\dlog{s}+f^2_{11}st\dlog{s}\Bigr)+\Bigl(g_{10}^2s\dlog{t}+g_{11}^2st\dlog{t}\Bigr)\\
     &=f_{01}^2t\dlog{s}+g_{10}^2s\dlog{t}+\Bigl(f_{11}^2-g_{11}^2\Bigr)st\dlog{s}
     \end{align*}
     Now, suppose that $\alpha=adb\in \Omega^1_F/Z^1_F$. Then we have that
     \begin{align*}
    adb=& \Bigl(\sum_{(0\leq i,j\leq p-1}a^p_{ij}s^i t^j \Bigr)d\Bigl(\sum_{0\leq i,j\leq p-1}b^p_{ij}s^i t^j \Bigr)\\
    =& \Bigl( a_{01}^2t+a_{10}^2s+a_{11}^2st\Bigr)d\Bigl(b_{01}^2t+b_{10}^2s+b_{11}^2st\Bigr)\\
    =& \Bigl(a_{11}^2b_{10}^2s^2+a_{10}^2b_{11}^2s^2\Bigr)t\dlog{s}+\Bigl(a_{11}^2b_{01}^2t^2 +a_{01}^2b_{11}^2t^2\Bigr)s\dlog{t}+\Bigl(a_{10}^2b_{01}^2-a_{01}^2b_{10}^2\Bigr)st\dlog{t}.
\end{align*}
Hence, it suffices to solve the following system of equations in the variables $a_{ij}, b_{ij}$ for $0 \leq i, j \leq 1$:
\begin{align*}
    f_{01}^2=& a_{11}^2b_{10}^2s^2+a_{10}^2b_{11}^2s^2\\
    g_{10}^2=& a_{11}^2b_{01}^2t^2 +a_{01}^2b_{11}^2t^2\\
    f_{11}^2-g_{11}^2=&a_{10}^2b_{01}^2-a_{01}^2b_{10}^2.
\end{align*}
Since $F$ is of characteristic $2$, it follows that
\begin{align*}
    f_{01}=& a_{11}b_{10}s+a_{10}b_{11}s\\
    g_{10}=& a_{11}b_{01}t +a_{01}b_{11}t\\
    f_{11}+g_{11}=&a_{10}b_{01}+a_{01}b_{10}.
\end{align*}

Now, to solve this system of equations, we can write down a solution explicitly when $f_{01} \neq 0$. Let $a_{11}=0$ and $b_{11}=1$. Then we have that $a_{10}=\dfrac{f_{01}}{s}$ and $a_{01}=\dfrac{g_{10}}{t}$. Next, we take $b_{10}=0$. It follows that $b_{01}=\dfrac{s(f_{11}+g_{11})}{f_{01}}$. The other case follows similarly.

Finally, we finish the proof in the case $p=2$. More precisely, we have that
\begin{align}
\label{symbolexpression}
\alpha=(\frac{f^2_{01}}{s}+\frac{g^2_{10}}{t})d(\dfrac{s^2(f^2_{11}+g^2_{11})}{f^2_{01}}t+st) \ \text{in} \ \Omega^1_F/Z^1_F.
\end{align}
Hence the symbol length is $1$.
\end{proof}
\begin{remark}
    For $(p,n)\neq (2,2)$, we can also formulate the system of equations in a similar way. But the number of equations and variables increase exponentially as $p$ and $n$ increase. Hence, we will need a more nuanced approach in the general case.
\end{remark}

We will provide a different approach in the appendix to the symbol length problem of the group $\Omega^1_F/Z^1_F$ using foliations and Galois theory of purely inseparable extensions.

\section{Reduction to the \texorpdfstring{$p$}{1}-torsion part of Brauer group}
In this section, we show that it is sufficient to prove the Conjecture \ref{Conjonbounds} for wildly ramified $p$-Brauer classes. Let $K$ be a henselian discretely valued field of characteristic $p>0$ with the valuation $v$, valuation ring $\mathcal{O}_K$ and residue field $F$. Suppose that $[F:F^p]=p^n,n\in \mathbb{N}$. We recall the following useful and well-known statements. We include references for the sake of completeness.
\begin{lemma}[ \cite{MR1626092}*{Proposition 2.1},  \cite{MR3413868}*{Proposition 6.1}]
Suppose that a field $K$ and all its finite extensions $L$, have the property that for all central simple $A/L$ of period $p$ satisfies $\text{ind}(A)\leq p^m$. Then, any $A/K$ of period $p^n$ satisfies $\text{ind}(A)\leq p^{mn}$. 
\label{takao}
\end{lemma}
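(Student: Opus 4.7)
The plan is a straightforward induction on $n$, with the base case $n=1$ being precisely the given hypothesis. For the inductive step, suppose the conclusion has been established for period $p^{n-1}$ (over every field satisfying the hypothesis), and let $A/K$ be a central simple algebra with $\text{per}(A) = p^n$. I would first pass to the Brauer class $[A^{\otimes p}] = p\cdot [A]$, whose order in $\text{Br}(K)$ equals $p^{n-1}$. Applying the inductive hypothesis to $K$ itself yields $\text{ind}(A^{\otimes p}) \mid p^{m(n-1)}$, and I would then take $L/K$ to be a maximal separable subfield of the division algebra underlying $A^{\otimes p}$, so that $L$ splits $A^{\otimes p}$ with $[L:K] \leq p^{m(n-1)}$.

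Next I would pass to $L$. Since every finite extension of $L$ is a finite extension of $K$, the field $L$ automatically inherits the hypothesis of the lemma. Over $L$ the class $[A_L]$ satisfies $p\cdot [A_L] = 0$, so it has period either $1$ or $p$; in either case the hypothesis applied to $L$ gives $\text{ind}(A_L) \leq p^m$. Choosing a splitting field $L'/L$ of $A_L$ with $[L':L] = \text{ind}(A_L) \leq p^m$ produces a finite extension $L'/K$ that splits $A$ with $[L':K] \leq p^{mn}$. Since $\text{ind}(A)$ divides the degree of every splitting field of $A$ and shares the same prime factors as $\text{per}(A) = p^n$ (hence is itself a power of $p$), this forces $\text{ind}(A) \leq p^{mn}$, completing the induction.

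The only background ingredient beyond the induction itself is the classical fact that the index of a central simple algebra is realized as the degree of a maximal separable subfield of the underlying division algebra, which guarantees the existence at each stage of a splitting field of degree exactly equal to the index. Granted this, the argument is purely formal, and I do not expect any genuine obstacle; the subtle point worth highlighting in the write-up is merely the verification that the hypothesis of the lemma transfers from $K$ to the intermediate field $L$, so that the inductive step can legitimately be applied after reducing the period by one power of $p$.
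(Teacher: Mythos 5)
The paper does not prove this lemma; it only cites \cite{MR1626092}*{Proposition 2.1} and \cite{MR3413868}*{Proposition 6.1}, and your induction is exactly the standard argument given in those references: split $A^{\otimes p}$ by a degree-$\leq p^{m(n-1)}$ field $L$ using the inductive hypothesis, note that $L$ inherits the hypothesis since its finite extensions are finite extensions of $K$, and then split $A_L$ (now of period dividing $p$) by a degree-$\leq p^m$ extension. Your proof is correct, including the key observations that the hypothesis passes to $L$ and that the index divides the degree of the resulting finite splitting field.
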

\begin{lemma}
Suppose that $K$ is a henselian discretely valued field with the residue field $F$ of characteristic $p>0$ and $[F:F^p]=p^n$. Let $L$ be a finite extension of $K$ with residue field $E$ and hence is a henselian discretely valued field. Then $[E:E^p]=p^n$.
\label{inse}
\end{lemma}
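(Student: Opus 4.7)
The plan is entirely elementary, relying on two tower formulas for field degrees together with a Frobenius-twist argument. First I would note that, since $L/K$ is a finite extension of henselian discretely valued fields, the residue extension $E/F$ is necessarily finite: one has the standard fundamental inequality $[E:F]\cdot e(L/K) \leq [L:K]$ (see e.g.\ the basic theory of henselian valuations), so in particular $[E:F] < \infty$.

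Next I would compute $[E:F^p]$ in two different ways using the two chains of subfields $F^p \subseteq F \subseteq E$ and $F^p \subseteq E^p \subseteq E$. The first gives
\[
[E:F^p] = [E:F]\cdot[F:F^p],
\]
and the second gives
\[
[E:F^p] = [E:E^p]\cdot[E^p:F^p].
\]

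The final ingredient is to identify $[E^p:F^p]$ with $[E:F]$. The absolute Frobenius $\mathrm{Fr}: x \mapsto x^p$ is an injective ring endomorphism of $E$ with image exactly $E^p$, and it carries $F$ onto $F^p$. Hence $\mathrm{Fr}$ is a ring isomorphism $E \xrightarrow{\sim} E^p$ whose restriction is an isomorphism $F \xrightarrow{\sim} F^p$. This compatibility means that any $F$-basis of $E$ is mapped to an $F^p$-basis of $E^p$, so $[E^p:F^p] = [E:F]$. Combining the two displayed equalities and cancelling the finite integer $[E:F]$ yields
\[
[E:E^p] = [F:F^p] = p^n,
\]
as desired.

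The only mild obstacle is verifying that $[E:F]$ is finite, which is where the henselian discretely valued hypothesis enters via the fundamental inequality; every remaining step is a formal manipulation of degrees together with the observation that Frobenius is a field isomorphism onto its image.
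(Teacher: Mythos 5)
Your proof is correct and complete; the paper states this lemma without proof, treating it as well known, and your argument is the standard one. The three ingredients — finiteness of $[E:F]$ via the fundamental inequality $e(L/K)\,[E:F]\le [L:K]$ for henselian discretely valued fields, the two tower computations of $[E:F^p]$ through $F$ and through $E^p$, and the Frobenius isomorphism giving $[E^p:F^p]=[E:F]$ — are exactly what is needed, and the cancellation of the finite integer $[E:F]$ is legitimate.
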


Combining these two lemmas above, it suffices to verify Conjecture \ref{Conjonbounds} for $p$-torsion Brauer classes. Moreover, we can assume that the residue field $F$ does not admit any finite extension of degree prime to $p$ by the lemma below.
\begin{lemma}[\cite{MR2388554}]
        Let \( K \) be a field and \( \alpha \in \text{Br}(K) \) a class annihilated by \( n \). If \( L/K \) is a finite field extension of degree \( d \) and \( n \) is relatively prime to \( d \), then \( \text{per}(\alpha) = \text{per}(\alpha|_L) \) and \( \text{ind}(\alpha) = \text{ind}(\alpha|_L) \).
        \label{primetop}
    \end{lemma}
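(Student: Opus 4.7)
The plan is to combine the standard restriction--corestriction machinery on Brauer groups with the classical fact that the period and the index of any central simple algebra share the same set of prime divisors. Set $d = [L:K]$ and write $\text{res}_{L/K} : \text{Br}(K) \to \text{Br}(L)$ and $\text{cor}_{L/K} : \text{Br}(L) \to \text{Br}(K)$ for the restriction and corestriction maps. The key identity I will exploit is $\text{cor}_{L/K} \circ \text{res}_{L/K} = d \cdot \text{id}_{\text{Br}(K)}$.

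For the equality of periods, one direction is immediate: since restriction is a group homomorphism, $\text{per}(\alpha|_L)$ divides $\text{per}(\alpha)$. For the reverse divisibility, let $m = \text{per}(\alpha|_L)$. Applying corestriction to the relation $m \cdot \alpha|_L = 0$ yields $md \cdot \alpha = 0$, so $\text{per}(\alpha) \mid md$. Combined with the hypothesis $\text{per}(\alpha) \mid n$ and $\gcd(n, d) = 1$, this forces $\text{per}(\alpha) \mid m = \text{per}(\alpha|_L)$.

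For the equality of indices, base-changing any $K$-splitting field of $\alpha$ to $L$ gives $\text{ind}(\alpha|_L) \mid \text{ind}(\alpha)$. In the other direction, if $M/L$ is a splitting field of $\alpha|_L$ of degree $\text{ind}(\alpha|_L)$, then $M/K$ is a splitting field of $\alpha$ of degree $\text{ind}(\alpha|_L) \cdot d$, so $\text{ind}(\alpha) \mid \text{ind}(\alpha|_L) \cdot d$. Here the decisive input is that $\text{per}(\alpha)$ and $\text{ind}(\alpha)$ have the same set of prime factors. Since $\text{per}(\alpha) \mid n$, every prime of $\text{ind}(\alpha)$ divides $n$ and is therefore coprime to $d$; this forces $\text{ind}(\alpha) \mid \text{ind}(\alpha|_L)$, as desired.

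I do not anticipate any serious obstacle, as this is a classical Schur--Albert--Brauer--Noether-style argument. The step requiring the most care is the index half: the mere divisibility of the period by the index is insufficient, and one must invoke the stronger fact that $\text{per}(\alpha)$ and $\text{ind}(\alpha)$ have identical radicals in order to convert the coarse bound $\text{ind}(\alpha) \mid \text{ind}(\alpha|_L) \cdot d$ into the sharp divisibility $\text{ind}(\alpha) \mid \text{ind}(\alpha|_L)$.
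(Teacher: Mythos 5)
Your proof is correct: the restriction--corestriction identity $\mathrm{cor}\circ\mathrm{res}=[L:K]$ handles the period, and combining $\mathrm{ind}(\alpha)\mid \mathrm{ind}(\alpha|_L)\cdot d$ with the fact that $\mathrm{per}(\alpha)$ and $\mathrm{ind}(\alpha)$ have the same prime divisors handles the index. The paper gives no proof of its own, citing Lieblich instead, and the standard argument in the literature is exactly the one you wrote.
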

Next we want to show that the tamely ramified classes satisfy the conjectured period-index bounds. The tamely ramified Brauer classes are exactly the elements in $M_0$. By fixing a uniformizer $\pi$ in $K$, we have the following split exact sequence
\begin{align}
    \xymatrix{
0\ar[r]&  \text{Br}(F)[p]\ar[r]& \text{Br}_{\text{tame}}(K)[p]\ar[r]^-{\partial}& H^1(F,\mathbb{Z}/p)\ar[r]& 0, 
}
\end{align}
Since $[F:F^p]=p^n$, it follows that $\text{Br.dim}_p(F)\leq n$ \cite{MR4176776}*{Corollary 3.4}. Hence it takes care of one of the two components form the split sequence above. The elements arising from the $H^1$ term are split by degree-$p$ extensions and so the conjectural bound follows in this case. So we get:
\begin{lemma}[Tamely ramified Brauer classes]
\label{tame}
Let $K$ be a henselian discretely valued field with the residue field $F$ of characteristic $p>0$. Assume that $[F:F^p]=p^n,n\in \mathbb{N}$. Let $\alpha\in \text{Br}(K)[p]$ and $\text{sw}(\alpha)=0$. Then $\text{ind}(\alpha)\mid \text{per}(\alpha)^{n+1}$.
\end{lemma}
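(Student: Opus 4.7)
The plan is to exploit the splitting of the tame/unramified sequence to reduce the bound for a tamely ramified $p$-torsion class to two much simpler pieces: an unramified part (bounded by $\mathrm{Br.dim}_p(F) \leq n$) and a symbol coming from the uniformizer (of index at most $p$). Since $\mathrm{sw}(\alpha)=0$ forces $\alpha \in M_0$, the class $\alpha$ lies in $\mathrm{Br}_{\mathrm{tame}}(K)[p]$, so the displayed split short exact sequence applies directly. We may also assume $\mathrm{per}(\alpha) = p$, otherwise $\alpha = 0$ and the claim is trivial.

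Fix a uniformizer $\pi$. The splitting of
\[
0 \to \mathrm{Br}(F)[p] \to \mathrm{Br}_{\mathrm{tame}}(K)[p] \xrightarrow{\partial} H^1(F,\mathbb{Z}/p) \to 0
\]
determined by $\pi$ sends a character $\chi \in H^1(F,\mathbb{Z}/p)$ to (the lift of) $\tilde\chi \cup \{\pi\}$, where $\tilde\chi$ is the unramified lift of $\chi$ to $H^1(K,\mathbb{Z}/p)$ (available by henselianity). Using this splitting I would write
\[
\alpha = \beta + \gamma,
\]
where $\beta$ lifts a class $\beta_0 \in \mathrm{Br}(F)[p]$ via the unramified embedding and $\gamma = \tilde\chi \cup \{\pi\}$ for some $\chi \in H^1(F,\mathbb{Z}/p)$.

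For the unramified part $\beta$: the hypothesis $[F:F^p]=p^n$ gives $\mathrm{Br.dim}_p(F) \leq n$ by the cited result, so $\mathrm{ind}(\beta_0) \mid p^n$. Azumaya algebra lifting along the henselian local ring $\mathcal{O}_K$ shows that the representative Azumaya algebra for $\beta_0$ lifts to one over $\mathcal{O}_K$, whose generic fiber represents $\beta$ and has the same degree, so $\mathrm{ind}(\beta) \mid \mathrm{ind}(\beta_0) \mid p^n$. For the ramified part $\gamma$: the character $\chi$ has order dividing $p$, so $\tilde\chi$ cuts out a cyclic extension of $K$ of degree at most $p$, and $\gamma$ is represented by a cyclic algebra of that degree; hence $\mathrm{ind}(\gamma) \mid p$. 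Combining via the standard submultiplicativity $\mathrm{ind}(\alpha) \mid \mathrm{ind}(\beta)\cdot\mathrm{ind}(\gamma)$ gives $\mathrm{ind}(\alpha) \mid p^{n+1} = \mathrm{per}(\alpha)^{n+1}$.

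The main obstacles are essentially bookkeeping rather than conceptual: one must make sure the splitting chosen for the tame sequence is compatible with Kato's description of $M_0$ so that the explicit cup-product formula $\gamma = \tilde\chi \cup \{\pi\}$ actually holds, and one must justify that the unramified lift preserves the index (via Azumaya algebras over the henselian ring $\mathcal{O}_K$). Both are standard, but the cleanest write-up should invoke Theorem \ref{equalquotient} to identify $M_0$ with the tame subgroup, then quote the two ingredients (Brauer dimension bound for $F$ and cyclicity of the ramified summand) to finish.
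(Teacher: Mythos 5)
Your proposal is correct and follows essentially the same route as the paper: decompose $\alpha$ via the split tame sequence into an unramified part (index dividing $p^n$ by the Brauer-dimension bound for $F$) and a part $\tilde\chi\cup\{\pi\}$ split by a degree-$p$ extension, then combine by submultiplicativity of the index. The extra detail you supply on the Azumaya lifting and the explicit cup-product splitting is consistent with, and merely fleshes out, the paper's terser argument.
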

Notice that the lemma works for both the equal characteristic and the mixed characteristic cases. We end up this section with the following technical lemmas. It explains how the Swan conductor of the class $a\dlog{1+b}$ is affected by the valuations of $a, b$.

\begin{lemma}[Kato \cite{MR991978}]
Let $a, b\in K, \ i, j\in \mathbb{Z}$, and assume that $v(a)\geq -i,\ v(b)\geq j>0$. Then we have
\begin{align}
    a\dlog{1+b}\in M_{i-j}.
\end{align}
More precisely, if $a\neq 0$, we have
\label{swift}
\begin{align}
    a\dlog{1+b}+ab\dlog{a}\in M_{i-2j}.
\end{align}
\end{lemma}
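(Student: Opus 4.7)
The plan is to work in the standard description of $H^q_p(K) \cong \Omega^{q-1}_K/((\text{Fr}-I)\Omega^{q-1}_K + d\Omega^{q-2}_K)$ and to use that $M_n = M^n$ is generated by forms $a' \dlog{b'_1} \wedge \cdots \wedge \dlog{b'_{q-1}}$ with $v(a') \geq -n$. The argument is identical for any $q$, so I describe it for $q=2$; the higher-$q$ case is obtained by wedging in a common factor $\dlog{c_1} \wedge \cdots \wedge \dlog{c_{q-2}}$ with $c_i \in \mathcal{O}_K^\times$, which affects neither the valuation bookkeeping nor the exactness claim used below. The degenerate cases $b=0$ and $a=0$ are trivial, so I assume $a,b \neq 0$; in particular $1+b \in \mathcal{O}_K^\times$ since $v(b) > 0$.

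For statement (1), the key move is the rewrite $\tfrac{1}{1+b} = 1 - \tfrac{b}{1+b}$ combined with the identity $db = b \dlog{b}$, yielding
\[
a \dlog{1+b} \;=\; \frac{a\,db}{1+b} \;=\; ab \dlog{b} \;-\; \frac{ab^2}{1+b}\,\dlog{b}.
\]
Since $v(1+b) = 0$, the coefficients satisfy $v(ab) \geq -(i-j)$ and $v(ab^2/(1+b)) \geq -(i-2j)$, so the first summand lies in $M_{i-j}$ and the second in $M_{i-2j}$ directly from the generator description of the filtration. Because $j > 0$, $M_{i-2j} \subseteq M_{i-j}$, which proves (1) and in fact yields the sharper congruence $a \dlog{1+b} \equiv ab \dlog{b} \pmod{M_{i-2j}}$.

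For (2), I would apply the Leibniz rule to $ab \in K^\times$:
\[
d(ab) \;=\; a\,db + b\,da \;=\; ab \dlog{b} + ab \dlog{a}.
\]
Since $d(ab) \in d\Omega^0_K$, it vanishes in $H^2_p(K)$, so $ab \dlog{b} = -ab \dlog{a}$ in $H^2_p(K)$. Combining with the congruence from (1) gives
\[
a \dlog{1+b} + ab \dlog{a} \;\in\; M_{i-2j},
\]
exactly as required.

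I do not foresee any substantive obstacle: the whole argument is a short symbolic computation combining the geometric-series identity $\tfrac{1}{1+b} = 1 - \tfrac{b}{1+b}$, the elementary relation $db = b\dlog{b}$, the Leibniz rule, and the vanishing of exact forms in $H^q_p(K)$, with the valuation bookkeeping handled entirely by the generator description of $M_n$.
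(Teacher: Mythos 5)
Your proof is correct and uses essentially the same method as the paper: a direct computation with K\"ahler differentials modulo exact forms, together with the generator description of the filtration $M_n$. The paper's computation pivots on the exact form $d\bigl(a/(1+b)\bigr)$ where you pivot on $d(ab)$ after expanding $\tfrac{1}{1+b}=1-\tfrac{b}{1+b}$, but both arguments discard the same kind of error terms (coefficients of valuation at least $-(i-2j)$) and arrive at the same congruence $a\,\dlog{1+b}\equiv -ab\,\dlog{a} \bmod M_{i-2j}$.
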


\begin{proof}
\begin{align*}
    a\dlog{1+b}&=\frac{a}{1+b}d(1+b)\\
    &\equiv-(1+b)d(\frac{a}{1+b})\ \text{mod}\ d(K)\\
    &\equiv-bd(\frac{a}{1+b}) \ \text{mod}\ d(K)\\
    &= -(ab)d(\frac{1}{1+b})-(\frac{b}{1+b})da \\
    &\equiv-\frac{b}{1+b}da \ \text{mod}\ M_{i-2j}\\
     &\equiv-bda \ \text{mod}\ M_{i-2j}\\
    &=-(ab)\dlog{a} \ \text{mod}\ M_{i-2j}.
\end{align*}
\end{proof}

Similarly, the next lemma explains how the Swan conductor of the class $\{1+x,1+y\}$ is affected by the valuations of $x,y$. We use the notations for mixed characteristic case below.
\begin{lemma}[\cite{MR550688}*{Lemma 2, Page 322}]
\label{swiftmix}
    Let $x,y\in K,\ i,j\in \mathbb{Z}_{>0}$ and assume that $v(x)\geq i,v(y)\geq j$. Then we have
    \begin{align}
        \{1+x,1+y\} \in M^{i+j}.
    \end{align}
    More precisely, we have
    \begin{align}
        \{1+x,1+y\}\equiv -\{1+xy,-x\} \ \text{mod} \ M^{i+j+1}.
    \end{align}
\end{lemma}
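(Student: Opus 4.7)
The plan is to derive the lemma by applying the Steinberg relation to the algebraic observation that $(1+x+xy) + \bigl(-x(1+y)\bigr) = 1$, combined with the factorization $1+x+xy = (1+x)(1+w)$, where $w := xy/(1+x)$. Since $v(x)\geq i \geq 1$, the element $1+x$ is a unit, so $w$ is well-defined with $v(w) \geq i+j$; consequently $1+w \in U_K^{i+j}$.

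\textbf{Step 1: the key identity.} The Steinberg relation yields $\{1+x+xy,\ -x(1+y)\} = 0$ in $K_2^M(K)/p$, which after expanding by bilinearity reads
\[
\{1+x+xy,\ -x\} + \{1+x+xy,\ 1+y\} = 0.
\]
Substituting $1+x+xy = (1+x)(1+w)$ into both terms and invoking the Steinberg vanishing $\{1+x,-x\} = 0$ to eliminate one summand, I obtain
\[
\{1+x, 1+y\} = -\{1+w,\ -x\} - \{1+w,\ 1+y\} = -\{1+w,\ -x(1+y)\}.
\]
Because $1+w \in U_K^{i+j}$, the right-hand side lies in $M^{i+j}$, which proves the first assertion.

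\textbf{Step 2: the refinement.} I treat the two summands of the displayed identity separately. Applying the first assertion (just proved) to the pair $(w, y)$, whose valuations are at least $i+j$ and $j$, gives $\{1+w, 1+y\} \in M^{i+2j} \subseteq M^{i+j+1}$ since $j \geq 1$. For the remaining piece, a direct computation gives
\[
\frac{1+w}{1+xy} \;=\; 1 \;-\; \frac{x^2 y}{(1+x)(1+xy)} \;\in\; U_K^{2i+j} \;\subseteq\; U_K^{i+j+1},
\]
so bilinearity yields $\{1+w, -x\} \equiv \{1+xy, -x\} \pmod{M^{i+j+1}}$. Combining these two facts with the identity from Step 1 produces
\[
\{1+x, 1+y\} \equiv -\{1+xy, -x\} \pmod{M^{i+j+1}},
\]
as required.

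The main subtlety is simply bookkeeping: verifying that each auxiliary symbol introduced in the Milnor-theoretic manipulations really lies in the claimed piece of the filtration. Once the identity $\{1+x, 1+y\} = -\{1+w, -x(1+y)\}$ is in hand, the rest is mechanical, and the "self-iterative" invocation of the first assertion inside the proof of the refinement is harmless because it is applied with strictly larger parameters $(i+j,\, j)$, so no circularity arises.
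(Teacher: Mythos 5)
Your argument is correct, and every step checks out: the identity $(1+x+xy)+(-x(1+y))=1$ gives the Steinberg relation you need, the factorization $1+x+xy=(1+x)(1+w)$ with $w=xy/(1+x)$ and $v(w)\geq i+j$ is right, $\{1+x,-x\}=0$ kills the unwanted term, and the two estimates in Step 2 (namely $\{1+w,1+y\}\in M^{i+2j}\subseteq M^{i+j+1}$ and $(1+w)/(1+xy)\in U_K^{2i+j}\subseteq U_K^{i+j+1}$, both using $i,j\geq 1$) are verified correctly, with no circularity in the self-application. The paper itself gives no proof of this lemma --- it simply cites Kato's Lemma 2 --- and your computation is essentially the standard argument from that source, so there is nothing to flag beyond the trivial degenerate case $x=0$ or $y=0$, where the symbols vanish and the statement is vacuous.
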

\section{Equal Characteristic Case}
In this section, we will prove the period-index result for $p$-torsion part of the Brauer group of a henselian discretely valued field of characteristic $p>0$.

Let $K$ be a henselian discretely valued field of characteristic $p>0$ with the valuation $v$, valuation ring $\mathcal{O}_K$ and residue field $F$ with $[F:F^p]=p^n,n\in \mathbb{N}$. Given a $p$-torsion Brauer class $\alpha\in \text{Br}_p(K)$, there are three cases: (\romannumeral 1) $\text{sw}(\alpha)=0$, (\romannumeral 2) $p\nmid \text{sw}(\alpha)>0$ and (\romannumeral 3) $p\mid \text{sw}(\alpha)>0$. Recall Conjecture \ref{prankconj1} mentioned in the previous section. We should point out that only Case (\romannumeral 3) is relevant to this conjecture. Now, Case (\romannumeral 1) is already discussed in  Lemma \ref{tame}. 
\subsection{Case \texorpdfstring{(\romannumeral 2): $p\nmid \text{sw}(\alpha)>0$}{2}}
We will prove the following theorem in this subsection.
\begin{theorem}
\label{equalpnmid}
 Let $F$ be a field of characteristic $p>0$ and $[F:F^p]=p^n,\ n\in\mathbb{N}_{>0}$. Let $K$ be a henselian discretely valued field of characteristic $p>0$ with the residue field $F$. Suppose that $\alpha\in \text{Br}(K)[p]$ and $p\nmid\text{sw}(\alpha)>0$. Then $\text{ind}(\alpha)\mid \text{per}(\alpha)^n$.
\end{theorem}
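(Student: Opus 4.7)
The plan is to construct a purely inseparable extension $L/K$ of degree exactly $p^n$ in which $\alpha|_L = 0$; this directly yields $\textup{ind}(\alpha)\mid p^n = \textup{per}(\alpha)^n$.

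Set $j = \sw{\alpha}$. By Lemmas \ref{primetop}, \ref{inse}, and \ref{takao}, I may assume $\textup{per}(\alpha) = p$ and that $F$ admits no finite extension of degree prime to $p$. Fix a $p$-basis $\{\bar x_1,\dots,\bar x_n\}$ of $F$ with unit lifts $x_i\in\mathcal{O}_K^{\times}$. Since $p\nmid j$, Theorem \ref{equalquotient} identifies $M_j/M_{j-1}$ with $\Omega^1_F$; thus the image of $\alpha$ takes the form $\sum_{i=1}^{n}\bar f_i\dlog{\bar x_i}$, and choosing lifts $f_i\in\mathcal{O}_K$ yields the decomposition
\[
\alpha \;=\; \sum_{i=1}^{n}\frac{f_i}{\pi^j}\dlog{x_i} \;+\; \beta, \qquad \beta\in M_{j-1}.
\]

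The candidate extension is $L = K(y_1,\dots,y_n)$ with $y_i^p = x_i u_i$, where each $u_i = 1 + \pi^j c_i/f_i$ is a $1$-unit depending on parameters $c_i\in\mathcal{O}_K$ to be determined (taking $u_i = 1$ if $\bar f_i = 0$). Since $\overline{x_i u_i} = \bar x_i$ and the $\bar x_i$ form a $p$-basis, the elements $x_i u_i$ are linearly independent modulo $(K^{\times})^p$, so $L/K$ is purely inseparable of degree exactly $p^n$ with ramification index $1$. In $L$ the relation $\dlog{y_i^p}=0$ gives $\dlog{x_i}|_L = -\dlog{u_i}|_L$, and the refined form of Lemma \ref{swift} (applied with $a = f_i/\pi^j$, $b = \pi^j c_i/f_i$, so that $v(a)=-j$, $v(b)=j$, $ab = c_i$, and the error term lies in $M_{-j}=0$) yields the key cancellation identity
\[
\frac{f_i}{\pi^j}\dlog{x_i}\bigg|_L \;=\; c_i\dlog{f_i} \;-\; j\,c_i\dlog{\pi}\quad\text{in }M_0^L.
\]
Summing over $i$ therefore converts the entire leading wild piece of $\alpha$ into a tame contribution over $L$.

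The decisive step is to choose the $c_i$, together with higher-order refinements $u_i = 1 + \pi^j c_i/f_i + \pi^{j+1}(\cdots) + \cdots$, so that the resulting tame contribution $\sum_i(c_i\dlog{f_i} - jc_i\dlog{\pi})$ exactly cancels $\beta|_L$ in $\textup{Br}(L)[p]$. I plan to proceed by induction along the Kato filtration on $\beta$: at each level $k < j$, Theorem \ref{equalquotient} expresses the image of the residual class in $M_k/M_{k-1}$ as a differential form in $\Omega^1_F$ (and, when $p\mid k$, additionally a class in $\Omega^1_F/Z^1_F\oplus F/F^{p}$), which is absorbed into the current correction of $u_i$ via another application of Lemma \ref{swift}, using the $n$ degrees of freedom indexed by the $p$-basis. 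The main obstacle is verifying that this inductive matching actually succeeds: that the $n$ parameters $c_i$ (and their higher-order refinements) supply enough freedom to kill $\beta|_L$ entirely within the fixed degree-$p^n$ extension $L$, without any further extension being required. The hypothesis $p\nmid j$ is essential: it keeps the leading quotient $M_j/M_{j-1}\cong\Omega^1_F$ of rank $n$ (rather than the larger $\Omega^1_F/Z^1_F\oplus F/F^{p}$ relevant to $p\mid j$), which is precisely why the present case avoids the symbol-length conjecture of Section \ref{secofsymbol}.
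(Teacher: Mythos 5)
Your overall strategy --- realize $\alpha$ as a sum of $n$ symbols split by a single purely inseparable extension $K\bigl((x_1u_1)^{1/p},\dots,(x_nu_n)^{1/p}\bigr)$ of degree $p^n$, with the $1$-units $u_i$ chosen to absorb the lower filtration levels via Lemma \ref{swift} --- is the same mechanism the paper uses, but your proposal stops exactly where the proof has to begin: the ``decisive step'' of matching $\beta$ level by level is asserted as a plan, flagged by you as unverified, and as set up it does not go through. The corrections available at an intermediate level $m$, coming from refining $u_i$ by a factor $1+\pi^{j-m}d_i/f_i$, contribute to $M_m/M_{m-1}$ only classes of the form $\sum_i \bar d_i\,\dlog{\bar f_i}$ (plus $\dlog{\pi}$-type terms). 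The class of $\beta$ there is an arbitrary element of $\Omega^1_F$ (or of $\Omega^1_F/Z^1_F\oplus F/F^p$), while the $F$-span of $\{\dlog{\bar f_1},\dots,\dlog{\bar f_n}\}$ is in general a proper subspace of $\Omega^1_F$: the $\bar f_i$ need not be $p$-independent, and you explicitly set $u_i=1$ whenever $\bar f_i=0$, discarding those degrees of freedom altogether. So the matching can already fail at the first level below $j=\sw{\alpha}$.

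The paper's proof avoids this by putting the freedom in the numerators rather than in the units: it maintains $\alpha\equiv\sum_i\frac{a_i}{\pi^k}\dlog{x_i}\ (\mathrm{mod}\ M_m)$ with $\{\bar x_1,\dots,\bar x_n\}$ a fixed $p$-basis (here $k=\sw{\alpha}$ is your $j$), so the $\Omega^1_F$-part of each graded piece is absorbed by replacing $a_i$ with $a_i+g_i\pi^{k-m}$ --- full rank-$n$ freedom at every level because the $\dlog{\bar x_i}$ span $\Omega^1_F$. The unit-modification trick $x_1\mapsto x_1(1+c\pi^{k-m})$ is reserved for the $F/F^p$-summand at levels $p\mid m$, where Lemma \ref{swift} produces the term $\frac{ka_1c}{\pi^m}\dlog{\pi}$ and the equation $\overline{ka_1c}=(\text{prescribed class})$ is solvable precisely because $k$ is invertible mod $p$. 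This is the second, and more delicate, use of the hypothesis $p\nmid\sw{\alpha}$; your proposal attributes that hypothesis only to the shape of the top quotient $M_j/M_{j-1}\cong\Omega^1_F$ and misses its role at the intermediate levels. Once the downward induction terminates, $\alpha$ is exactly a sum of $n$ symbols $\frac{a_i'}{\pi'^k}\dlog{x_i'}$, each split by a degree-$p$ extension, which yields the splitting field of degree $p^n$ you were aiming to construct directly.
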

\begin{proof}
$ $\\
Let $\{\bar{x}_1,\cdots,\bar{x}_n\}$ be a $p$-basis of $F$. Let $\{x_1,\cdots,x_n\}$ be liftings of the $p$-basis and $\pi$ be a uniformizer. Since $p\nmid\text{sw}(\alpha)=k>0$, we have that $\alpha\equiv\dfrac{a_1}{\pi^k}\dlog{x_1}+\cdots+\dfrac{a_n}{\pi^k}\dlog{x_n}\ \text{mod}\ M_{k-1}$, where either $\bar{a}_i=0$ or $v(a_i)=0$, $\bar{a}_i\neq 0$ for at least one $i$.
The proof is based on the following downward induction on $j$:

Hypotheses: \begin{align*}& \alpha\in \text{Br}(K)[p],\ 0\leq j<k,\\
&\alpha\equiv[\dfrac{a_1}{\pi^k}\dlog{x_1}+\cdots+\dfrac{a_n}{\pi^k}\dlog{x_n}]\ \text{mod} \ M_j,\\
& \text{where either}\ \bar{a}_i=0 \ \text{or}\ v(a_i)=0\ \text{for each}\ i\in\{1,\cdots,n\},\\
&\bar{a}_i\neq 0 \ \text{for at least one $i$},\\
&\{\bar{x}_1,\cdots,\bar{x}_n\} \text{ is a $p$-basis of $F$, and }\pi \ \text{is a uniformizer of}\  K.
\end{align*}

Conclusion:
\begin{align*}
    &\text{There exist $\{a_i' \}_i$, $\{x_i'\}_i$ and $\pi'$ for $i\in\{1,\cdots,n\}$ such that}\\
    &\alpha\equiv[\dfrac{a_1'}{\pi'^k}\dlog{x'_1}+\cdots+\dfrac{a'_n}{\pi'^k}\dlog{x'_n}]\ \text{mod} \ M_{j-1},\\
    & \text{where either}\ \bar{a}'_i=0 \ \text{or}\ v(a'_i)=0\ \text{for each}\ i\in\{1,\cdots,n\},\\
    &\bar{a}'_i\neq 0 \ \text{for at least one $i$},\\
&\{\bar{x}'_1,\cdots,\bar{x}'_n\} \text{ is a $p$-basis of $F$, and }\pi' \ \text{is a uniformizer of}\  K.
\end{align*}
    If $p\nmid j$, by fixing the uniformizer $\pi$, we have $M_j/M_{j-1}\cong \Omega^1_F$. Since $\{\bar{x}_1,\cdots,\bar{x}_n\}$ is a $p$-basis of $F$, the conclusion easily follows.

    If $p\mid j>0$, by fixing the uniformizer $\pi$, we have $M_j/M_{j-1}\cong \Omega^1_F/Z^1_F\oplus F/F^p$. Denote the projections from $M_j/M_{j-1}$ to two direct components by $P_1,P_2$ respectively. Without loss of generality, we assume that $\bar{a}_1\neq 0$. For any $c\in \mathcal{O}_K$,
    \begin{align}
    \frac{a_1}{\pi^{k}}\dlog{1+c\pi^{k-j}}&\equiv -\frac{a_1c\pi^{k-j}}{\pi^{k}}\dlog{\frac{a_1}{\pi^{k}}} \ \text{mod}\ M_{k-2(k-j)}\\
    &= -\frac{a_1c}{\pi^j}\dlog{a_1}+
    \frac{ka_1c}{\pi^j}\dlog{\pi}\ \text{mod}\ M_{k-2(k-j)}.
\end{align}
Since $\alpha-[\dfrac{a_1}{\pi^k}\dlog{x_1}+\cdots+\dfrac{a_n}{\pi^k}\dlog{x_n}]\in M_{j}$, we can choose $c$ such that $\overline{ka_1c}=P_2(\alpha-[\dfrac{a_1}{\pi^k}\dlog{x_1}+\cdots+\dfrac{a_n}{\pi^k}\dlog{x_n}])$. Then $\{x_1(1+c\pi^{k-j}),x_2,\cdots,x_n\}$ gives a different lifting of the $p$-basis $\{\bar{x}_1,\bar{x}_2,\cdots,\bar{x}_n\}$. We use this new lifting to match the class from $\bigl(P_1(\alpha-[\dfrac{a_1}{\pi^k}\dlog{x_1(1+c\pi^{k-j})}+\cdots+\dfrac{a_n}{\pi^k}\dlog{x_n}]),0\bigr)\in \Omega^1_F/Z^1_F\oplus F/F^p$. The conclusion follows.

If $j=0$, the proof is similar to the case $(p\mid j>0)$, since we treat the elements from $\Omega^1_F$ and $\Omega^1_F/Z^1_F$ in the same way.
\end{proof}
    
\subsection{\texorpdfstring{Case (\romannumeral 3): $p\mid \text{sw}(\alpha)>0$}{3}}
\begin{proposition}
 Let $F$ be a field of characteristic $p>0$ and $[F:F^p]=p^n,\ n\in\mathbb{N}_{>0}$. Let $K$ be a henselian discretely valued field of characteristic $p>0$ with the residue field $F$. Assume that Conjecture \ref{prankconj1} is {\it true}. Let $\alpha\in \text{Br}(K)[p]$ and $p\mid\text{sw}(\alpha)>0$. Then $\text{ind}(\alpha)\mid \text{per}(\alpha)^n$.
 \label{equalpmid}
\end{proposition}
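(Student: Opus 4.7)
The plan is to mirror the proof of Theorem~\ref{equalpnmid}, supplementing it with Conjecture~\ref{prankconj1} at every wild level of Kato's filtration. The numerical point is that when $p\mid k=\text{sw}(\alpha)$, Theorem~\ref{equalquotient} identifies the leading quotient $M_k/M_{k-1}$ with $\Omega^1_F/Z^1_F\oplus F/F^p$, so a naive normal form would require up to $n+1$ symbols; the conjecture cuts the $\Omega^1_F/Z^1_F$ contribution down to $n-1$, leaving exactly $n$ symbols in total---precisely what is needed for the bound $\text{per}(\alpha)^n=p^n$.

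First I would establish the initial normal form at $j=k$. Applying Conjecture~\ref{prankconj1} to the image of $\alpha$ in $\Omega^1_F/Z^1_F$ and lifting, I expect to write
\[
\alpha \equiv \sum_{i=1}^{n-1}\tfrac{a_i}{\pi^k}\dlog{x_i} + \tfrac{a_n}{\pi^k}\dlog{\pi} \pmod{M_{k-1}},
\]
where $\{\bar x_1,\ldots,\bar x_{n-1}\}\subset F$ extends to a $p$-basis of $F$ and at least one $\bar a_i\ne 0$. Setting $x_n:=\pi$, the class is represented by $n$ logarithmic symbols.

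Next I would run the downward induction on $j$ from Theorem~\ref{equalpnmid}. For $p\nmid j$ the correction in $M_j/M_{j-1}\cong \Omega^1_F$ is absorbed using the $p$-basis (up to $n$ symbols). For $p\mid j>0$ the correction splits into an $\Omega^1_F/Z^1_F$ piece, handled by a second application of Conjecture~\ref{prankconj1} ($\leq n-1$ symbols), and an $F/F^p$ piece, cleared by the substitution $x_i\mapsto x_i(1+c\pi^{k-j})$ for some $i$ with $\bar a_i\ne 0$ and suitable $c\in\mathcal{O}_K$, using Lemma~\ref{swift}. At $j=0$ the procedure absorbs the tame $M_0\cong \text{Br}(F)[p]\oplus H^1(F,\mathbb{Z}/p)$ correction into the $n$ symbols exactly as in Theorem~\ref{equalpnmid}, since $\text{Br.dim}_p(F)\leq n$ supplies the required symbolic expressions for the first factor while the $H^1$ factor is absorbed via $\dlog{\pi}$.

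Once the induction terminates, $\alpha$ equals $\sum_{i=1}^n \frac{a_i}{\pi^k}\dlog{x_i}$ in $\text{Br}(K)[p]$ (with possibly updated $a_i,x_i,\pi$), i.e.\ a sum of $n$ Artin--Schreier cyclic $p$-algebras $[a_i/\pi^k,x_i)_p$; each is split by the degree-$p$ extension $K(\beta_i)$ with $\beta_i^p-\beta_i=a_i/\pi^k$, so the compositum $L=K(\beta_1,\ldots,\beta_n)$ has degree dividing $p^n$ over $K$ and splits $\alpha$, giving $\text{ind}(\alpha)\mid p^n=\text{per}(\alpha)^n$. The main obstacle is the bookkeeping in the induction: the uniformizer appears both in the denominator of every symbol and as the distinguished $n$th symbol $\dlog{\pi}$, so each adjustment of $\pi$ at a lower filtration level must be propagated through all $n$ terms simultaneously, and the $\dlog{\pi}$ symbol behaves differently under the $x_i\mapsto x_i(1+c\pi^{k-j})$ trick than the unit-lift symbols do, forcing an extra case analysis. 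A second delicate point is that each application of Conjecture~\ref{prankconj1} may change the $p$-basis of $F$ implicit in the normal form, so one must reconcile these local basis choices with the global $p$-basis fixed at the top, typically by explicit coordinate changes in the $x_i$ at each level.
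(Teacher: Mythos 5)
Your overall architecture (normal form with $n-1$ symbols from Conjecture~\ref{prankconj1} plus one $\dlog{\pi}$ term, then downward induction through Kato's filtration) matches the paper's, but two of your mechanisms break down exactly where this case differs from Theorem~\ref{equalpnmid}. First, your proposed way of clearing the $F/F^p$ component at a wild level $p\mid j$ --- the substitution $x_i\mapsto x_i(1+c\pi^{k-j})$ --- does not work here. By Lemma~\ref{swift}, $\frac{a_i}{\pi^k}\dlog{1+c\pi^{k-j}}\equiv -\frac{a_ic}{\pi^j}\dlog{a_i}+\frac{ka_ic}{\pi^j}\dlog{\pi}$, and the $\dlog{\pi}$ contribution is $k$ times a class in a $p$-torsion group; since now $p\mid k$, it vanishes, so this substitution produces \emph{no} $F/F^p$ component at all. (In Theorem~\ref{equalpnmid} the coefficient $k$ is a unit mod $p$, which is precisely why the trick works there.) The paper's proof instead carries the distinguished term $\frac{b}{\pi^k}\dlog{\pi}$ with $v(b)=0$, $\bar b\notin F^p$, clears the $F/F^p$ piece by perturbing $b\mapsto b+g\pi^{k-j}$, and clears the extra $n$-th differential direction by perturbing the uniformizer $\pi\mapsto\pi(1+c\pi^{k-j})$, which via Lemma~\ref{swift} contributes $-\frac{bc}{\pi^j}\dlog{b}$; the hypothesis $\bar b\notin F^p$ is what makes this last term usable. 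The degenerate case $b=0$ must then be treated separately (the paper reduces it to the $p\nmid\sw{\alpha}$ case or to the case above); your proposal does not address it.

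Second, re-applying Conjecture~\ref{prankconj1} at every wild level, as you propose, produces at each such level a fresh set of up to $n-1$ symbols $a_i'\,db_i'$ whose ``slopes'' $b_i'$ have nothing to do with the $x_1,\dots,x_{n-1}$ fixed at the top; summing these over all levels would blow past $n$ symbols, and the ``reconciliation by coordinate changes'' you invoke is not justified. The conjecture is needed only once, for the leading term in $M_k/M_{k-1}$. At lower levels the correction should be lifted to $\Omega^1_F$ and expanded in the \emph{fixed} $p$-basis $\{\bar x_1,\dots,\bar x_{n-1},\bar x_n\}$: the first $n-1$ coefficients are absorbed into the existing $a_i$, and the $\dlog{\bar x_n}$ coefficient is killed by the uniformizer perturbation just described. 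The same remark applies to your treatment of $j=0$: invoking $\text{Br.dim}_p(F)\le n$ gives a splitting field for the residual class but does not merge it into the $n$ symbols already in hand; the paper instead lifts the tame correction to $\Omega^1_F$ and absorbs it identically to the other levels.
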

\begin{proof}
    $ $\\
    The Conjecture \ref{prankconj1} implies that the symbol length of the group $\Omega^1_F/Z^1_F$ is no more than $n-1$. Since $p\mid\text{sw}(\alpha)=k>0$, by Theorem \ref{equalquotient}, we have that $\alpha\equiv[\dfrac{a_1}{\pi^k}\dlog{x_1}+\cdots+\dfrac{a_n}{\pi^k}\dlog{x_{n-1}}+\dfrac{b}{\pi^k}\dlog{\pi}]\ \text{mod}\ M_{k-1}$. 
    For $i\in \{1,\cdots,n-1\}$, either $\bar{a}_i=0$ or $v(a_i)=0$, and either $b=0$ or $v(b)=0$ with $\bar{b}\notin F^p$. Here at least one of $\{\bar{a}_i,b\}_i$ is not zero, $\pi$ is a uniformizer, $v(x_i)=0$, and $\{\bar{x}_1,\cdots,\bar{x}_{n-1}\}$ is a $F^p$-linearly independent set. 

    We discuss the following two cases separately: 1) $v(b)=0$ with $\bar{b}\notin F^p$ and 2) $b = 0$ separately.
\subsubsection{\texorpdfstring{$v(b)=0$ with $\bar{b}\notin F^p$}{1}}
    The proof is based on the following induction on $j$:

Hypotheses: \begin{align*}& \alpha\in \text{Br}(K)[p],\ 0\leq j<k,\\
&\alpha\equiv[\dfrac{a_1}{\pi^k}\dlog{x_1}+\cdots+\dfrac{a_{n-1}}{\pi^k}\dlog{x_{n-1}}+\dfrac{b}{\pi^k}\dlog{\pi}]\ \text{mod} \ M_j,\\
& \text{where either}\ \bar{a}_i=0 \ \text{or}\ v(a_i)=0\ \text{for each}\ i\in\{1,\cdots,n\},\\
&\text{$v(b)=0,\ \bar{b}\notin F^p$, and}\ \pi\ \text{is a uniformizer of}\  K,\\
&\text{$v(x_i)=0$, $\{\bar{x}_1,\cdots,\bar{x}_{n-1}\}$ is a $F^p$-linearly independent set}.
\end{align*}

Conclusion:
\begin{align*}
    &\text{There exist $\{a_i'\}_i$, $\{x_i'\}_i$, $b$ and $\pi'$ for $i\in\{1,\cdots,n-1\}$ such that}\\
    &\alpha\equiv[\dfrac{a'_1}{\pi'^k}\dlog{x'_1}+\cdots+\dfrac{a'_{n-1}}{\pi'^k}\dlog{x'_{n-1}}+\dfrac{b'}{\pi'^k}\dlog{\pi'}]\ \text{mod} \ M_{j-1},\\
& \text{where either}\ \bar{a}'_i=0 \ \text{or}\ v(a'_i)=0\ \text{for each}\ i\in\{1,\cdots,n\},\\
&\text{$v(b')=0,\ \bar{b}'\notin F^p$, and}\ \pi'\ \text{is a uniformizer of}\  K,\\
&\text{$v(x'_i)=0$, $\{\bar{x}'_1,\cdots,\bar{x}'_{n-1}\}$ is a $F^p$-linearly independent set}.
\end{align*}
Let $\alpha'=\alpha-[\dfrac{a_1}{\pi^k}\dlog{x_1}+\cdots+\dfrac{a_{n-1}}{\pi^k}\dlog{x_{n-1}}+\dfrac{b}{\pi^k}\dlog{\pi}]$.
    If $p\mid j>0$, by fixing the uniformizer $\pi$, we have $M_j/M_{j-1}\cong \Omega^1_F/Z^1_F\oplus F/F^p$. Since $\{\bar{x}_1,\cdots,\bar{x}_{n-1}\}$ is a $F^p$-linearly independent set and $[F:F^p]=p^n$, we can choose $x_n\in \mathcal{O}_K$ such that $\{\bar{x}_1,\cdots,\bar{x}_n\}$ is a $p$-basis of $F$. Denote the projections from $M_j/M_{j-1}$ to two direct components by $P_1,P_2$ respectively. Then $P_1(\alpha')=f_1\dlog{\bar{x}_1}+\cdots+f_n\dlog{\bar{x}_n}$ for some $f_i\in F$.  For any $c\in \mathcal{O}_K$, by Lemma \ref{swift},
\begin{align}
    \dfrac{b}{\pi^k}\dlog{1+c\pi^{k-j}}&\equiv -\frac{bc}{\pi^j}\dlog{\frac{b}{\pi^k}}\\
    &= -\frac{bc}{\pi^j}\dlog{b}\  \text{mod}\ M_{k-2(k-j)}.
\end{align}
We can choose $c\in \mathcal{O}_K$ such that $-\overline{bc}\dlog{\bar{b}}$ coincides with $f_n$ (the coefficient of $\dlog{\bar{x}_n}$ above). Let $g\in\mathcal{O}_K$ be a lifting of $P_2(\alpha')\in F/F^p$.
Let $\beta= \alpha-[\dfrac{a_1}{\pi^k}\dlog{x_1}+\cdots+\dfrac{a_{n-1}}{\pi^k}\dlog{x_{n-1}}+\dfrac{b+g\pi^{k-l}}{\pi^k}\dlog{\pi(1+c\pi^{k-l}}]$. Then $P_1(\beta)$ is supported away from $\dlog{\bar{x}_n}$ and $P_2(\beta)=0$. Hence the conclusion follows.

If $j=0$ or $p\nmid j>0$, the proof is similar to the case $(p\mid j>0)$, since we treat the elements from $\Omega^1_F$, $\Omega^1_F/Z^1_F$, $\text{Br}_p(F)$ in the same way. More precisely, we use the lift in $\Omega^1_F$ and proceed as in the last paragraph.
\subsubsection{\texorpdfstring{$b=0$}{2}}
In this case, the proof can be reduced to either the case $(p\nmid \text{sw}(\alpha)>0)$ or the above case. This completes the proof in the equal characteristic case.
\end{proof}
\section{Mixed characteristic case}
In this section, we prove the period-index result for $p$-torsion part of the Brauer group of a henselian discretely valued field of characteristic $0$ with residual characteristic $p>0$.

Let $K$ be a henselian discretely valued field of characteristic $0$ with the valuation $v$, the valuation ring $\mathcal{O}_K$ and the residue field $F$ of characteristic $p>0$. Let $[F:F^p]=p^n, n \in \mathbb{N}$. Since $K$ may not contain a primitive $p$-th root of unity, we may not be able to write a Brauer class as a sum of symbol algebras. However, when we talk about the period-index problems, we can always reduce to the case that $K$ contains a primitive $p$-th root of unity as follows. If the field $K$ does not contain a primitive $p$-th root of unity, we can adjoin a primitive $p$-th root of unity $\zeta$ to the field $K$. The field extension $K(\zeta)/K$ is of degree $p-1$, which is relatively prime to $p$. So the period of a $p$-torsion Brauer class over the extension is still $p$, and hence it is equivalent to consider the period-index problem over $K(\zeta)$.

    In the rest of this section, we assume that $K$ contains a primitive $p$-th root of unity $\zeta$. Notice that $v(\zeta-1)=\frac{v(p)}{p-1}$ and $p\mid N:=\frac{pv(p)}{p-1}$. Given a $p$-torsion Brauer class $\alpha\in \text{Br}_p(K)$, there are four cases: (\romannumeral 1) $\text{sw}(\alpha)=0$, (\romannumeral 2) $p\nmid \text{sw}(\alpha)>0$, (\romannumeral 3) $p\mid \text{sw}(\alpha),\ 0<\text{sw}(\alpha)<N$ and (\romannumeral 4) $\text{sw}(\alpha)=N$. 
    
    In the Conjecture \ref{prankconj1} of the previous section, it is important to note that Cases (\romannumeral 3) and (\romannumeral 4) are the remaining interesting cases. This is because Case (\romannumeral 1) was  already discussed above, and Case (\romannumeral 2) follows since the symbol length is given by the $p$-rank . Finally, Case (\romannumeral 4) requires the bound of symbol length of $K_2(F)/pK_2(F)$ (Conjecture \ref{prankconj2}).

Recall that Case (\romannumeral 1) has been dealt with Lemma \ref{tame}. We discuss the other three cases in the subsequent subsections separately.

\subsection{(\romannumeral 2) \texorpdfstring{$p\nmid \text{sw}(\alpha)>0$}{2}}
We prove the following theorem in this subsection.
\begin{theorem}
\label{mixedpnmid}
 Let $F$ be a field of characteristic $p>0$ and $[F:F^p]=p^n,\ n\in\mathbb{N}_{>0}$. Let $K$ be a henselian discretely valued field of characteristic $0$ with the residue field $F$. Suppose that $\alpha\in \text{Br}(K)[p]$ and $p\nmid\text{sw}(\alpha)>0$. Then $\text{ind}(\alpha)\mid \text{per}(\alpha)^n$.
\end{theorem}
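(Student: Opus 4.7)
The proof adapts the downward-induction strategy of Theorem \ref{equalpnmid} to the mixed characteristic setting, replacing the equal characteristic filtration and Lemma \ref{swift} by the mixed characteristic filtration and Lemma \ref{swiftmix}. Fix a lifting $\{x_1,\ldots,x_n\} \subset \mathcal{O}_K^\times$ of a $p$-basis of $F$ and a uniformizer $\pi$ of $K$, and set $k = \text{sw}(\alpha)$; since $p \mid N$ while $p \nmid k$, we have $0 < k < N$. Using the isomorphism $M_k/M_{k-1} \cong \Omega^1_F$ available for $p \nmid k$, we write
\[
\alpha \equiv \sum_{i=1}^n \{1 + \pi^{N-k} a_i,\ x_i\} \pmod{M_{k-1}},
\]
with $a_i \in \mathcal{O}_K$ satisfying either $\bar a_i = 0$ or $v(a_i) = 0$, and at least one $\bar a_i$ nonzero. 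Since $M_{-1} = M^{N+1} = 0$, it will suffice to produce, by downward induction on $j$ from $k-1$ to $0$, updated data $\{a_i'\}, \{x_i'\}, \pi'$ preserving these structural properties and the above congruence modulo $M_{j-1}$; after the step $j = 0$, the class $\alpha$ is expressed as a sum of exactly $n$ cyclic algebras of degree $p$, and since the index of a sum of Brauer classes divides the product of the individual indices we obtain $\text{ind}(\alpha) \mid p^n$.

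The inductive step from level $j$ to $j-1$ splits into cases according to the mixed characteristic description of $M_j/M_{j-1}$. When $p \nmid j$, the quotient is $\Omega^1_F$, and the correction is absorbed by the shift $a_i \mapsto a_i + \pi^{k-j}\tilde f_i$ for appropriate lifts $\tilde f_i \in \mathcal{O}_K$; the factorization $1 + \pi^{N-k}(a_i + \pi^{k-j}\tilde f_i) = (1 + \pi^{N-k} a_i)(1 + \pi^{N-j} g_i)$ with $g_i = \tilde f_i/(1 + \pi^{N-k} a_i)$, combined with bilinearity of the symbol, matches the target element of $\Omega^1_F$ modulo $M_{j-1}$. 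When $p \mid j$ with $0 < j < N$, the quotient gains an $F/F^p$ summand; this is absorbed by the modification $x_1 \mapsto x_1(1 + c\pi^{k-j})$ (using $\bar a_1 \neq 0$), for which Lemma \ref{swiftmix} gives
\[
\{1 + \pi^{N-k} a_1,\ 1 + c\pi^{k-j}\} \equiv -\{1 + \pi^{N-j} a_1 c,\ -\pi^{N-k} a_1\} \pmod{M_{j-1}},
\]
and expanding the right-hand side by bilinearity produces an $F/F^p$ contribution proportional to $\overline{(N-k) a_1 c}$ (which can be tuned to hit any target, since $N - k \not\equiv 0 \pmod{p}$); the residual $\Omega^1_F/Z^1_F$ part is then absorbed via the $a_i$-shift of the previous case. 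The case $j = 0$ is handled in the same spirit, absorbing the tame subgroup $M_0$ by a combination of coefficient and uniformizer adjustments, in parallel with the treatment of $j = 0$ in Theorem \ref{equalpnmid}.

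The main technical hurdle is keeping the secondary correction terms produced by Lemma \ref{swiftmix} confined to $M_{j-1}$, so as not to disturb the matchings already achieved at levels above $j$. The valuation estimate in Lemma \ref{swiftmix} places the secondary correction precisely there, and a careful bookkeeping of the exponents shows that the updated data $\{a_i'\}, \{x_i'\}, \pi'$ again satisfy the inductive hypothesis, closing the induction.
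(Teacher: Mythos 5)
Your proposal follows essentially the same downward-induction argument as the paper's proof: the same initial expression of $\alpha$ modulo $M_{k-1}$ as a sum of $n$ symbols $\{1+\pi^{N-k}a_i,x_i\}$, the same case split on $j$ according to $M_j/M_{j-1}$, the same use of Lemma \ref{swiftmix} with the modification $x_1\mapsto x_1(1+c\pi^{k-j})$ to absorb the $F/F^p$ summand (exploiting $p\nmid N-k$), and coefficient shifts of the $a_i$ for the $\Omega^1_F$ part. The argument is correct; you even make explicit the concluding index count that the paper leaves implicit.
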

\begin{proof}
    Let $\{\bar{x}_1,\cdots,\bar{x}_n\}$ be a $p$-basis of $F$. Let $\{x_1,\cdots,x_n\}$ be liftings of the $p$-basis and $\pi$ be a uniformizer. Since $p\nmid\text{sw}(\alpha)=k>0$, we have that $\alpha\equiv[\{1+\pi^{N-k}a_1,x_1\}+\cdots+\{1+\pi^{N-k}a_n,x_n\}]\ \text{mod}\ M_{k-1}$, where for each $i$ either $\bar{a}_i=0$ or $v(a_i)=0$ and $\bar{a}_i\neq 0$ for at least one $i\in\{1,\cdots,n\}$. The proof is based on the following induction on $j$:

Hypotheses: \begin{align*}& \alpha\in \text{Br}(K)[p],\ 0\leq j<k,\\
&\alpha\equiv[\{1+\pi^{N-k}a_1,x_1\}+\cdots+\{1+\pi^{N-k}a_n,x_n\}]\ \text{mod} \ M_j,\\
& \text{where either}\ \bar{a}_i=0 \ \text{or}\ v(a_i)=0\ \text{for each}\ i\in\{1,\cdots,n\},\\
&\bar{a}_i\neq 0 \ \text{for at least one $i$},\\
&\{\bar{x}_1,\cdots,\bar{x}_n\} \text{ is a $p$-basis of $F$, and }\pi \ \text{is a prime element of}\  K.
\end{align*}

Conclusion:
\begin{align*}
    &\text{There exist $\{a_i'\}_i$, $\{x_i'\}_i$ and $\pi'$ for $i\in\{1,\cdots,n\}$ such that}\\
    &\alpha\equiv[\{1+\pi'^{N-k}a'_1,x'_1\}+\cdots+\{1+\pi'^{N-k}a'_n,x'_n\}]\ \text{mod} \ M_{j-1},\\
    & \text{where either}\ \bar{a}'_i=0 \ \text{or}\ v(a'_i)=0\ \text{for each}\ i\in\{1,\cdots,n\},\\
    &\bar{a}'_i\neq 0 \ \text{for at least one $i$},\\
&\{\bar{x}'_1,\cdots,\bar{x}'_n\} \text{ is a $p$-basis of $F$, and }\pi' \ \text{is a prime element of}\  K.
\end{align*}
 If $p\nmid j$, by fixing the uniformizer $\pi$, we have that $M_j/M_{j-1}\cong \Omega^1_F$. Since $\{\bar{x}_1,\cdots,\bar{x}_n\}$ is a $p$-basis of $F$, the conclusion easily follows.

    If $p\mid j>0$, by fixing the uniformizer $\pi$, we have that $M_j/M_{j-1}\cong \Omega^1_F/Z^1_F\oplus F/F^p$. Denote the projections from $M_j/M_{j-1}$ to two direct summands by $P_1,P_2$ respectively. WLOG, we assume that $a_1\neq 0$. For any $c\in \mathcal{O}_K$, by Lemma \ref{swiftmix}, we have 
    \begin{align}
    \{1+\pi^{N-k}a_1,1+c\pi^{k-j}\}&\equiv -\{1+\pi^{N-j}a_1c,-\pi^{N-k}a_1\}\ \text{mod} \ M_{j-1}\\ 
    &= -\{1+\pi^{N-j}a_1c,-a_1\}-\{1+\pi^{N-j}a_1c,\pi^{N-k}\}\ \text{mod}\ M_{j-1}\\
    &=-\{1+\pi^{N-j}a_1c,a_1\}+\{1-(N-k)\pi^{N-j}a_1c,\pi\}\ \text{mod} \ M_{j-1}
\end{align}
Since $\alpha-[\{1+\pi^{N-k}a_1,x_1\}+\cdots+\{1+\pi^{N-k}a_n,x_n\}]\in M_{j}$, we can choose $c$ such that 
$$-\overline{(N-k)a_1c}=P_2\left(\alpha-[\{1+\pi^{N-k}a_1,x_1\}+\cdots+\{1+\pi^{N-k}a_n,x_n\}]\right).$$ 
Then $\{x_1(1+c\pi^{k-j}),x_2,\cdots,x_n\}$ gives a different lifting of the $p$-basis $\{\bar{x}_1,\bar{x}_2,\cdots,\bar{x}_n\}$. We use this new lifting to match the first component of the class from 
$$\left(P_1(\alpha-[\{1+\pi^{N-k}a_1,x_1(1+c\pi^{k-j})\}+\cdots+\{1+\pi^{N-k}a_n,x_n\}]),0\right)\in \Omega^1_F/Z^1_F\oplus F/F^p.$$ 

The conclusion follows immediately. Finally, if $j=0$, the proof is similar to the case $(p\mid j>0)$, since we treat the elements from $\Omega^1_F$ and $\Omega^1_F/Z^1_F$ in the same way.
    \end{proof}
    \subsection{(\romannumeral 3) \texorpdfstring{$p\mid \text{sw}(\alpha),\ 0<\text{sw}(\alpha)<N$}{2}}
We prove the following theorem in this subsection.
\begin{theorem}
\label{mixedpmid}
 Let $F$ be a field of characteristic $p>0$ and $[F:F^p]=p^n,\ n\in\mathbb{N}_{>0}$. Let $K$ be a henselian discretely valued field of characteristic $0$ with the residue field $F$. Assume that Conjecture \ref{prankconj1} is {\it true}. Suppose that $\alpha\in \text{Br}(K)[p]$ and $p\mid \text{sw}(\alpha),\ 0<\text{sw}(\alpha)<N$. Then $\text{ind}(\alpha)\mid \text{per}(\alpha)^n$.
\end{theorem}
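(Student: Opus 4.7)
The plan is to adapt the proof of Proposition~\ref{equalpmid} to the mixed characteristic setting, using the filtration on $\text{Br}(K)[p] \cong K_2^M(K)/p$ in place of differential forms. Set $k = \text{sw}(\alpha)$ and fix a uniformizer $\pi \in \mathcal{O}_K$. Since $p \mid k$ and $0 < k < N$, the structure theorem for the mixed characteristic filtration gives
\[
M_k / M_{k-1} \;\cong\; \Omega^1_F/Z^1_F \oplus F/F^p.
\]
Invoking Conjecture~\ref{prankconj1}, the $\Omega^1_F/Z^1_F$ component of the class of $\alpha$ has symbol length at most $n-1$. Hence I would write
\[
\alpha \equiv \sum_{i=1}^{n-1}\{1+\pi^{N-k}a_i,\, x_i\} + \{1+\pi^{N-k}b,\, \pi\} \pmod{M_{k-1}},
\]
with units $x_i \in \mathcal{O}_K^\times$ whose residues $\bar x_1,\ldots,\bar x_{n-1}$ are $F^p$-linearly independent, $a_i \in \mathcal{O}_K$ satisfying either $\bar a_i = 0$ or $v(a_i) = 0$, and $b \in \mathcal{O}_K$ satisfying either $b = 0$ or ($v(b) = 0$ and $\bar b \notin F^p$).

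Then I would run a downward induction on $j$, starting from $j = k-1$ and descending to $j = 0$, maintaining an expression of the same shape for $\alpha$ modulo $M_j$ (with possibly updated data $\{a_i'\}, \{x_i'\}, b', \pi'$). In the inductive step, write $\gamma = \alpha - (\text{current sum})$; then $\gamma \in M_j$ and its class in $M_j/M_{j-1}$ lies in $\Omega^1_F$ when $p \nmid j$ and in $\Omega^1_F/Z^1_F \oplus F/F^p$ when $p \mid j$ and $0 < j < N$. To absorb $\gamma$, one perturbs the data by $a_i \mapsto a_i + a_i'\pi^{k-j}$, $x_i \mapsto x_i(1 + c_i\pi^{k-j})$, $b \mapsto b + b'\pi^{k-j}$, and $\pi \mapsto \pi(1 + c\pi^{k-j})$, then applies Lemma~\ref{swiftmix} to read off the induced contributions in $M_j/M_{j-1}$. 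Conjecture~\ref{prankconj1} controls the $\Omega^1_F/Z^1_F$ contribution using at most the $n-1$ available $(a_i, x_i)$-slots, while the modifications of $b$ and $\pi$ absorb the $F/F^p$ contribution. The sub-case $b = 0$ reduces either to Theorem~\ref{mixedpnmid} (the $p \nmid \text{sw}(\alpha) > 0$ case) or to the previous sub-case, exactly as in the proof of Proposition~\ref{equalpmid}. Once $j = 0$ is reached, $\alpha$ is exhibited as a sum of $n$ symbols in $K_2^M(K)/p$, so $\text{ind}(\alpha) \mid p^n = \text{per}(\alpha)^n$.

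The main obstacle, exactly as in the equal characteristic argument, is the sub-case $p \mid j$ of the inductive step: one must simultaneously control the $\Omega^1_F/Z^1_F$ and $F/F^p$ components of $\gamma$ from only a fixed pool of $n$ symbol slots. The key tool is the Kato identity of Lemma~\ref{swiftmix}: expanding $\{1+\pi^{N-k}a,\, 1 + c\pi^{k-j}\}$ produces both a \emph{unit} contribution $-\{1+\pi^{N-j}ac,\, -a\}$ and a \emph{uniformizer} contribution $\{1-(N-k)\pi^{N-j}ac,\, \pi\}$ modulo $M_{j-1}$, which is precisely what is needed to distribute a single $x_i$-modification between the two summands. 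Combining this with analogous expansions for perturbations of $\pi$ itself, and using the hypothesis $\bar b \notin F^p$ to ensure $d\log \bar b$ is a nonzero class in $\Omega^1_F/Z^1_F$, should yield the coordinated cancellation required to complete the induction.
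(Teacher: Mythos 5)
Your proposal follows essentially the same route as the paper's proof: the same initial decomposition of $\alpha$ modulo $M_{k-1}$ via Conjecture \ref{prankconj1} into $n-1$ unit symbols plus one $\pi$-symbol, the same downward induction on $j$ with the same perturbations of $a_i$, $x_i$, $b$ and $\pi$ controlled by Lemma \ref{swiftmix}, the same use of $\bar b\notin F^p$ to absorb the residual $\Omega^1_F/Z^1_F$ component, and the same reduction of the $b=0$ sub-case to Theorem \ref{mixedpnmid}. No substantive differences to report.
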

\begin{proof}
    $ $\\
    Conjecture \ref{prankconj1} implies that the symbol length of the group $\Omega^1_F/Z^1_F$ is no more than $n-1$. Since $p\mid\text{sw}(\alpha)=k>0$, we have that 
    $$\alpha\equiv[\{1+\pi^{N-k}a_1,x_1\}+\cdots+\{1+\pi^{N-k}a_{n-1},x_{n-1}\}+\{1+\pi^{N-k}b,\pi\}]\ \text{mod}\ M_{k-1}.$$ 
    For each $i\in \{1,\cdots,n-1\}$, either $\bar{a}_i=0$ or $v(a_i)=0$. Also, either $\bar{b}=0$, or $v(b)=0, \ \bar{b}\notin F^p$. We also have that not all $a_i,b$ are zero and $\pi$ is a uniformizer. Finally, we have that $v(x_i)=0$, and $\{\bar{x}_1,\cdots,\bar{x}_{n-1}\}$ is a $F^p$-linearly independent set.
    
     We discuss the cases $v(b)=0,\ \bar{b}\notin F^p$ and $b=0$ separately.
\subsubsection{\texorpdfstring{$v(b)=0,\ \bar{b}\notin F^p$}{1}}
    The proof is based on the following induction on $j$:

Hypotheses: 

\begin{align*}& \alpha\in \text{Br}(K)[p],\ 0\leq j<k,\\
&\alpha\equiv[\{1+\pi^{N-k}a_1,x_1\}+\cdots+\{1+\pi^{N-k}a_{n-1},x_{n-1}\}+\{1+\pi^{N-k}b,\pi\}]\ \text{mod} \ M_j,\\
& \text{where either}\ \bar{a}_i=0 \ \text{or}\ v(a_i)=0\ \text{for each}\ i\in\{1,\cdots,n\},\\
&\text{$v(b)=0,\ \bar{b}\notin F^p$, and}\ \pi\ \text{is a prime element of}\  K,\\
&\text{$v(x_i)=0$, $\{\bar{x}_1,\cdots,\bar{x}_{n-1}\}$ is a $F^p$-linearly independent set}.
\end{align*}

Conclusion:
\begin{align*}
    &\text{There exist $\{a_i'\}_i$, $\{x_i'\}_i$, $b$ and $\pi'$ for $i\in\{1,\cdots,n-1\}$ such that}\\
    &\alpha\equiv[\{1+\pi'^{N-k}a'_1,x'_1\}+\cdots+\{1+\pi'^{N-k}a'_{n-1},x'_{n-1}\}+\{1+\pi'^{N-k}b',\pi'\}]\ \text{mod} \ M_{j-1},\\
& \text{where either}\ \bar{a}'_i=0 \ \text{or}\ v(a'_i)=0\ \text{for each}\ i\in\{1,\cdots,n\},\\
&\text{$v(b')=0,\ \bar{b}'\notin F^p$, and}\ \pi'\ \text{is a prime element of}\  K,\\
&\text{$v(x'_i)=0$, $\{\bar{x}'_1,\cdots,\bar{x}'_{n-1}\}$ is a $F^p$-linearly independent set}.
\end{align*}
Let $$\alpha'=\alpha-[\{1+\pi^{N-k}a_1,x_1\}+\cdots+\{1+\pi^{N-k}a_{n-1},x_{n-1}\}+\{1+\pi^{N-k}b,\pi\}].$$

    If $p\mid j>0$, by fixing the uniformizer $\pi$, we have that $M_j/M_{j-1}\cong \Omega^1_F/Z^1_F\oplus F/F^p$. Since $\{\bar{x}_1,\cdots,\bar{x}_{n-1}\}$ is a $F^p$-linearly independent set and $[F:F^p]=p^n$, we can choose $x_n\in \mathcal{O}_K$ such that $\{\bar{x}_1,\cdots,\bar{x}_n\}$ is a $p$-basis of $F$. Denote the projections from $M_j/M_{j-1}$ to the two direct summands by $P_1,P_2$ respectively. Then $P_1(\alpha')=f_1\dlog{\bar{x}_1}+\cdots+f_n\dlog{\bar{x}_n}$.  For any $c\in \mathcal{O}_K$, by Lemma \ref{swiftmix}, we have
    \begin{align}
    \{1+\pi^{N-k}b,1+c\pi^{k-j}\}&\equiv -\{1+\pi^{N-j}bc,-\pi^{N-k}b\}\ \text{mod} \ M_{j-1}\\
    &=-\{1+\pi^{N-j}bc,-b\}\ \text{mod}\ M_{j-1}\\
    &=\{1-\pi^{N-j}bc,b\}\ \text{mod} \ M_{j-1}
\end{align}
We can choose $c\in \mathcal{O}_K$ such that $-\overline{bc}\dlog{\bar{b}}$ coincides with $f_n$ as a coefficient of $\dlog{\bar{x}_n}$. Let $g\in\mathcal{O}_K$ be a lifting of $P_2(\alpha')\in F/F^p$. Then $$P_1\bigl(\alpha-[\{1+\pi^{N-k}a_1,x_1\}+\cdots+\{1+\pi^{N-k}a_{n-1},x_{n-1}\}+\{1+\pi^{N-k}(b+g\pi^{k-j}),\pi(1+c\pi^{k-j})\}]\bigr)$$ 
is supported away from $\dlog{\bar{x}_n}$ and $P_2(\bullet)=0$. Hence the conclusion follows.

If $j=0$ or $p\nmid j>0$, the proof is similar to the case $(p\mid j>0)$, since we treat the elements from $\Omega^1_F$, $\Omega^1_F/Z^1_F$, $\text{Br}(F)[p]$ in the same way. More precisely, we are using their liftings in $\Omega^1_F$.
\subsubsection{\texorpdfstring{$b=0$}{2}}
In this case, the proof can be reduced to either the case $(p\nmid \text{sw}(\alpha)>0)$ or the above case. This completes the proof in this case.
    \end{proof}
    
\subsection{(\romannumeral 4) \texorpdfstring{$\text{sw}(\alpha)=N$}{4}}
We now consider the remaining case. We prove the following theorem in this subsection.
\begin{theorem}
\label{mixedtop}
 Let $F$ be a field of characteristic $p>0$ and $[F:F^p]=p^n,\ n\in\mathbb{N}_{>0}$. Let $K$ be a henselian discretely valued field of characteristic $0$ with the residue field $F$. Assume that Conjecture \ref{prankconj2} and Conjecture \ref{prankconj1} are both {\it true}. Suppose that $\alpha\in \text{Br}(K)[p]$ and $ \text{sw}(\alpha)=N$. Then $\text{ind}(\alpha)\mid \text{per}(\alpha)^n$.
\end{theorem}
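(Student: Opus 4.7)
The plan is to mimic the proof of Theorem~\ref{mixedpmid}, extending its downward induction one step higher by starting at $j=N$, and feeding in both symbol length conjectures at the very top of the filtration.

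First, I would reduce to the case that $K$ contains a primitive $p$-th root of unity by passing to $K(\zeta_p)$. This extension has degree dividing $p-1$, coprime to $p$, so by Lemma~\ref{primetop} it preserves $\text{per}(\alpha)$ and $\text{ind}(\alpha)$; by Lemma~\ref{inse} the residue field retains $p$-rank $n$, and both conjectures continue to apply over the new residue field. Using the structure theorem for the mixed characteristic filtration, the top quotient $M_N/M_{N-1}\cong K^M_2(F)/p\oplus K^M_1(F)/p$ receives the image of $\alpha$. Applying Conjecture~\ref{prankconj2} to the first summand (bounding the symbol length by $n-1$) and the trivial one-symbol bound for $K^M_1(F)/p=F^\times/(F^\times)^p$ to the second, and lifting through the explicit isomorphism formulas, I would produce units $c_i,d_i,a\in\mathcal{O}_K^\times$ and a uniformizer $\pi$ with
\[
\alpha \equiv \sum_{i=1}^{n-1}\{c_i,d_i\} + \{a,\pi\} \pmod{M_{N-1}}.
\]

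The core of the argument would be a downward induction on $j$ from $N-1$ to $0$ that preserves this shape, exactly in the spirit of Theorems~\ref{mixedpnmid} and~\ref{mixedpmid}. At each stage I seek new data $c'_i,d'_i,a',\pi'$ satisfying the analogous congruence modulo $M_{j-1}$. For $p\nmid j$ the quotient $M_j/M_{j-1}\cong\Omega^1_F$ is matched by perturbing the $d'_i$'s along a $p$-basis of $F$, as in the proof of Theorem~\ref{mixedpnmid}. For $p\mid j$ with $0<j<N$ the quotient splits as $\Omega^1_F/Z^1_F\oplus F/F^p$; I would absorb the first summand into the $n-1$ unit symbols via Conjecture~\ref{prankconj1}, and absorb the $F/F^p$ summand into $\{a',\pi'\}$ by perturbing $\pi'$ to $\pi'(1+c\pi'^{\,N-j})$, whose effect on the uniformizer symbol is computed by Lemma~\ref{swiftmix}. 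The tame level $j=0$ is handled by the same recipe.

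Once the induction reaches $j=-1$ the congruence becomes exact, so $\alpha=\sum_{i=1}^{n-1}\{c'_i,d'_i\}+\{a',\pi'\}$ in $\text{Br}(K)[p]$. The Kummer extension
\[
L=K\bigl(\sqrt[p]{d'_1},\ldots,\sqrt[p]{d'_{n-1}},\sqrt[p]{a'}\bigr)
\]
has degree at most $p^n$ over $K$ and splits every symbol in the sum, so $\text{ind}(\alpha)\mid[L:K]\mid p^n=\text{per}(\alpha)^n$.

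The main obstacle I expect is the first induction step, from level $N$ down to level $N-1$. The top-level symbols $\{c_i,d_i\}$ carry the Milnor $K$-theoretic shape of $K^M_2(F)/p$, whereas at level $N-1$ the target quotient $\Omega^1_F$ calls for symbols of the shape $\{1+\pi a,b\}$. Translating between the two shapes without inflating the symbol count, and simultaneously keeping track of the $K^M_1$-factor $\{a,\pi\}$, requires a careful application of bilinearity together with Lemmas~\ref{swift} and~\ref{swiftmix}; this is the point at which Conjectures~\ref{prankconj1} and~\ref{prankconj2} must be used in a coordinated fashion.
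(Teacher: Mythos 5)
Your proposal follows essentially the same route as the paper's own (very brief) proof: apply Conjecture~\ref{prankconj2} to the image of $\alpha$ in $M_N/M_{N-1}\cong K_2^M(F)/p\oplus K_1^M(F)/p$ to get $n-1$ unit symbols plus one symbol containing $\pi$, then run the downward induction of Theorem~\ref{mixedpmid} using Conjecture~\ref{prankconj1} at the levels $p\mid j$, and split the resulting $n$ symbols by a Kummer extension of degree at most $p^n$. In fact you supply more detail than the paper, which disposes of the descent with ``the rest of the computations as discussed in previous cases''; the transition from level $N$ to level $N-1$ that you single out as the delicate point is precisely the step the paper leaves implicit.
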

\begin{proof}
    $ $\\
    The proof is similar to Case (\romannumeral 3). Fixing a uniformizer $\pi$, it follows that $M_N/M_{N-1}\cong K_2(F)/pK_2(F)\oplus K_1(F)/pK_1(F)$. Hence, at the starting point, we need both the symbol length result of $K_2(F)/pK_2(F)$ and $\Omega^1_F/Z^1_F$. 
    
    With the symbol length for $K_2$ as above, the rest of the computations as discussed in previous cases. This completes the proof. 
\end{proof}

\bibliographystyle{plain}

@incollection {MR991978,
    AUTHOR = {Kato, Kazuya},
     TITLE = {Swan conductors for characters of degree one in the imperfect
              residue field case},
 BOOKTITLE = {Algebraic {$K$}-theory and algebraic number theory
              ({H}onolulu, {HI}, 1987)},
    SERIES = {Contemp. Math.},
    VOLUME = {83},
     PAGES = {101--131},
 PUBLISHER = {Amer. Math. Soc., Providence, RI},
      YEAR = {1989},
   MRCLASS = {11S31 (11G45 11R37 19F05)},
  MRNUMBER = {991978},
MRREVIEWER = {V. P. Snaith},
       DOI = {10.1090/conm/083/991978},
       URL = {https://doi-org.proxy2.cl.msu.edu/10.1090/conm/083/991978},
}

@article {MR550688,
    AUTHOR = {Kato, Kazuya},
     TITLE = {A generalization of local class field theory by using
              {$K$}-groups. {I}},
   JOURNAL = {J. Fac. Sci. Univ. Tokyo Sect. IA Math.},
  FJOURNAL = {Journal of the Faculty of Science. University of Tokyo.
              Section IA. Mathematics},
    VOLUME = {26},
      YEAR = {1979},
    NUMBER = {2},
     PAGES = {303--376},
      ISSN = {0040-8980},
   MRCLASS = {12B25 (12B22)},
  MRNUMBER = {550688},
MRREVIEWER = {Alan Candiotti},
}

@article {MR1626092,
    AUTHOR = {Yamazaki, Takao},
     TITLE = {Reduced norm map of division algebras over complete discrete
              valuation fields of certain type},
   JOURNAL = {Compositio Math.},
  FJOURNAL = {Compositio Mathematica},
    VOLUME = {112},
      YEAR = {1998},
    NUMBER = {2},
     PAGES = {127--145},
      ISSN = {0010-437X},
   MRCLASS = {11S15 (11S25)},
  MRNUMBER = {1626092},
MRREVIEWER = {Yoshihiro Koya},
       DOI = {10.1023/A:1000439025718},
       URL = {https://doi-org.proxy2.cl.msu.edu/10.1023/A:1000439025718},
}

@article {MR2388554,
    AUTHOR = {Lieblich, Max},
     TITLE = {Twisted sheaves and the period-index problem},
   JOURNAL = {Compos. Math.},
  FJOURNAL = {Compositio Mathematica},
    VOLUME = {144},
      YEAR = {2008},
    NUMBER = {1},
     PAGES = {1--31},
      ISSN = {0010-437X},
   MRCLASS = {14F22 (14A20 14D20 16K50)},
  MRNUMBER = {2388554},
MRREVIEWER = {Stefan Schr\"{o}er},
       DOI = {10.1112/S0010437X07003144},
       URL = {https://doi-org.proxy2.cl.msu.edu/10.1112/S0010437X07003144},
}

@article {MR3413868,
    AUTHOR = {Matzri, Eliyahu},
     TITLE = {Symbol length in the {B}rauer group of a field},
   JOURNAL = {Trans. Amer. Math. Soc.},
  FJOURNAL = {Transactions of the American Mathematical Society},
    VOLUME = {368},
      YEAR = {2016},
    NUMBER = {1},
     PAGES = {413--427},
      ISSN = {0002-9947},
   MRCLASS = {16K50 (12G05 19C30 19D45)},
  MRNUMBER = {3413868},
       DOI = {10.1090/tran/6326},
       URL = {https://doi-org.proxy2.cl.msu.edu/10.1090/tran/6326},
}

@article {MR565469,
    AUTHOR = {Illusie, Luc},
     TITLE = {Complexe de de {R}ham-{W}itt et cohomologie cristalline},
   JOURNAL = {Ann. Sci. \'{E}cole Norm. Sup. (4)},
  FJOURNAL = {Annales Scientifiques de l'\'{E}cole Normale Sup\'{e}rieure. Quatri\`eme
              S\'{e}rie},
    VOLUME = {12},
      YEAR = {1979},
    NUMBER = {4},
     PAGES = {501--661},
      ISSN = {0012-9593},
   MRCLASS = {14F30},
  MRNUMBER = {565469},
MRREVIEWER = {William E. Lang},
       URL = {http://www.numdam.org.proxy2.cl.msu.edu/item?id=ASENS_1979_4_12_4_501_0},
}

@article {MR2545681,
    AUTHOR = {Harbater, David and Hartmann, Julia and Krashen, Daniel},
     TITLE = {Applications of patching to quadratic forms and central simple
              algebras},
   JOURNAL = {Invent. Math.},
  FJOURNAL = {Inventiones Mathematicae},
    VOLUME = {178},
      YEAR = {2009},
    NUMBER = {2},
     PAGES = {231--263},
      ISSN = {0020-9910},
   MRCLASS = {11E04 (16K20)},
  MRNUMBER = {2545681},
MRREVIEWER = {Mohammad G. Mahmoudi},
       DOI = {10.1007/s00222-009-0195-5},
       URL = {https://doi-org.proxy2.cl.msu.edu/10.1007/s00222-009-0195-5},
}

@incollection {MR689394,
    AUTHOR = {Kato, Kazuya},
     TITLE = {Galois cohomology of complete discrete valuation fields},
 BOOKTITLE = {Algebraic {$K$}-theory, {P}art {II} ({O}berwolfach, 1980)},
    SERIES = {Lecture Notes in Math.},
    VOLUME = {967},
     PAGES = {215--238},
 PUBLISHER = {Springer, Berlin-New York},
      YEAR = {1982},
   MRCLASS = {12B20 (12B22)},
  MRNUMBER = {689394},
MRREVIEWER = {S. P. Demushkin},
}

@article {MR4411477,
    AUTHOR = {Totaro, Burt},
     TITLE = {Cohomological invariants in positive characteristic},
   JOURNAL = {Int. Math. Res. Not. IMRN},
  FJOURNAL = {International Mathematics Research Notices. IMRN},
      YEAR = {2022},
    NUMBER = {9},
     PAGES = {7152--7201},
      ISSN = {1073-7928},
   MRCLASS = {14F20 (14L15 14L17)},
  MRNUMBER = {4411477},
MRREVIEWER = {Shusuke Otabe},
       DOI = {10.1093/imrn/rnaa321},
       URL = {https://doi-org.proxy2.cl.msu.edu/10.1093/imrn/rnaa321},
}

@incollection {MR1386649,
    AUTHOR = {Izhboldin, O. T.},
     TITLE = {On the cohomology groups of the field of rational functions},
 BOOKTITLE = {Mathematics in {S}t. {P}etersburg},
    SERIES = {Amer. Math. Soc. Transl. Ser. 2},
    VOLUME = {174},
     PAGES = {21--44},
 PUBLISHER = {Amer. Math. Soc., Providence, RI},
      YEAR = {1996},
      ISBN = {0-8218-0559-2},
   MRCLASS = {12G99},
  MRNUMBER = {1386649},
MRREVIEWER = {Jean-Pierre\ Tignol},
       DOI = {10.1090/trans2/174/03},
       URL = {https://doi.org/10.1090/trans2/174/03},
}

@article {MR0849653,
    AUTHOR = {Bloch, Spencer and Kato, Kazuya},
     TITLE = {{$p$}-adic \'{e}tale cohomology},
   JOURNAL = {Inst. Hautes \'{E}tudes Sci. Publ. Math.},
  FJOURNAL = {Institut des Hautes \'{E}tudes Scientifiques. Publications
              Math\'{e}matiques},
    NUMBER = {63},
      YEAR = {1986},
     PAGES = {107--152},
      ISSN = {0073-8301,1618-1913},
   MRCLASS = {14F30 (11S25 11S31 18F25 19F27)},
  MRNUMBER = {849653},
MRREVIEWER = {Gerd\ Faltings},
       URL = {http://www.numdam.org/item?id=PMIHES_1986__63__107_0},
}

@article {MR3219517,
    AUTHOR = {Parimala, R. and Suresh, V.},
     TITLE = {Period-index and {$u$}-invariant questions for function fields
              over complete discretely valued fields},
   JOURNAL = {Invent. Math.},
  FJOURNAL = {Inventiones Mathematicae},
    VOLUME = {197},
      YEAR = {2014},
    NUMBER = {1},
     PAGES = {215--235},
      ISSN = {0020-9910,1432-1297},
   MRCLASS = {16K50 (11E04 11E08 11R58 12G05 12J10 16K20)},
  MRNUMBER = {3219517},
MRREVIEWER = {Detlev\ W.\ Hoffmann},
       DOI = {10.1007/s00222-013-0483-y},
       URL = {https://doi.org/10.1007/s00222-013-0483-y},
}

@article {MR4082254,
    AUTHOR = {Bhaskhar, Nivedita and Haase, Bastian},
     TITLE = {Brauer {$p$}-dimension of complete discretely valued fields},
   JOURNAL = {Trans. Amer. Math. Soc.},
  FJOURNAL = {Transactions of the American Mathematical Society},
    VOLUME = {373},
      YEAR = {2020},
    NUMBER = {5},
     PAGES = {3709--3732},
      ISSN = {0002-9947,1088-6850},
   MRCLASS = {16K50 (11R58)},
  MRNUMBER = {4082254},
MRREVIEWER = {Mehran\ Motiee},
       DOI = {10.1090/tran/8038},
       URL = {https://doi.org/10.1090/tran/8038},
}

@article {MR4176776,
    AUTHOR = {Chapman, Adam and McKinnie, Kelly},
     TITLE = {Essential dimension, symbol length and {$p$}-rank},
   JOURNAL = {Canad. Math. Bull.},
  FJOURNAL = {Canadian Mathematical Bulletin. Bulletin Canadien de
              Math\'{e}matiques},
    VOLUME = {63},
      YEAR = {2020},
    NUMBER = {4},
     PAGES = {882--890},
      ISSN = {0008-4395,1496-4287},
   MRCLASS = {16K50 (13A35 19D45 20G10)},
  MRNUMBER = {4176776},
MRREVIEWER = {Demba\ Barry},
       DOI = {10.4153/s0008439520000119},
       URL = {https://doi.org/10.4153/s0008439520000119},
}

@article {MR4394741,
    AUTHOR = {Chipchakov, Ivan D.},
     TITLE = {On the {B}rauer {$p$}-dimension of {H}enselian discrete valued
              fields of residual characteristic {$p > 0$}},
   JOURNAL = {J. Pure Appl. Algebra},
  FJOURNAL = {Journal of Pure and Applied Algebra},
    VOLUME = {226},
      YEAR = {2022},
    NUMBER = {8},
     PAGES = {Paper No. 106948, 22},
      ISSN = {0022-4049,1873-1376},
   MRCLASS = {12E15 (11S15 12J10 16K20 16K50)},
  MRNUMBER = {4394741},
MRREVIEWER = {Uzi\ Vishne},
       DOI = {10.1016/j.jpaa.2021.106948},
       URL = {https://doi.org/10.1016/j.jpaa.2021.106948},
}

@article {MR0723946,
    AUTHOR = {Brylinski, Jean-Luc},
     TITLE = {Th\'{e}orie du corps de classes de {K}ato et rev\^{e}tements
              ab\'{e}liens de surfaces},
   JOURNAL = {Ann. Inst. Fourier (Grenoble)},
  FJOURNAL = {Universit\'{e} de Grenoble. Annales de l'Institut Fourier},
    VOLUME = {33},
      YEAR = {1983},
    NUMBER = {3},
     PAGES = {23--38},
      ISSN = {0373-0956,1777-5310},
   MRCLASS = {11S70 (11G45 11S31 14E20 18F25)},
  MRNUMBER = {723946},
MRREVIEWER = {Daniel\ R.\ Grayson},
       URL = {http://www.numdam.org/item?id=AIF_1983__33_3_23_0},
}

@article {MR2031199,
    AUTHOR = {Voevodsky, Vladimir},
     TITLE = {Motivic cohomology with {${\bf Z}/2$}-coefficients},
   JOURNAL = {Publ. Math. Inst. Hautes \'{E}tudes Sci.},
  FJOURNAL = {Publications Math\'{e}matiques. Institut de Hautes \'{E}tudes
              Scientifiques},
    NUMBER = {98},
      YEAR = {2003},
     PAGES = {59--104},
      ISSN = {0073-8301,1618-1913},
   MRCLASS = {14F42 (12G05 19D45 19E15)},
  MRNUMBER = {2031199},
MRREVIEWER = {Eric\ M.\ Friedlander},
       DOI = {10.1007/s10240-003-0010-6},
       URL = {https://doi.org/10.1007/s10240-003-0010-6},
}

@article {MR0675529,
    AUTHOR = {Merkur\cprime ev, A. S. and Suslin, A. A.},
     TITLE = {{$K$}-cohomology of {S}everi-{B}rauer varieties and the norm
              residue homomorphism},
   JOURNAL = {Izv. Akad. Nauk SSSR Ser. Mat.},
  FJOURNAL = {Izvestiya Akademii Nauk SSSR. Seriya Matematicheskaya},
    VOLUME = {46},
      YEAR = {1982},
    NUMBER = {5},
     PAGES = {1011--1046, 1135--1136},
      ISSN = {0373-2436},
   MRCLASS = {12A62 (12G05 14C35 14F12 14F15 14G25 18F25)},
  MRNUMBER = {675529},
}

@incollection {MR1715874,
    AUTHOR = {Colliot-Th\'{e}l\`ene, Jean-Louis},
     TITLE = {Cohomologie galoisienne des corps valu\'{e}s discrets
              henseliens, d'apr\`es {K}. {K}ato et {S}. {B}loch},
 BOOKTITLE = {Algebraic {$K$}-theory and its applications ({T}rieste, 1997)},
     PAGES = {120--163},
 PUBLISHER = {World Sci. Publ., River Edge, NJ},
      YEAR = {1999},
      ISBN = {981-02-3491-0},
   MRCLASS = {11S70 (11S25 19D45 19F15)},
  MRNUMBER = {1715874},
MRREVIEWER = {Amnon\ Besser},
}
\begin{bibdiv}
\begin{biblist}

\bib{MR4082254}{article}{
      author={Bhaskhar, Nivedita},
      author={Haase, Bastian},
       title={Brauer {$p$}-dimension of complete discretely valued fields},
        date={2020},
        ISSN={0002-9947,1088-6850},
     journal={Trans. Amer. Math. Soc.},
      volume={373},
      number={5},
       pages={3709\ndash 3732},
         url={https://doi.org/10.1090/tran/8038},
      review={\MR{4082254}},
}

\bib{MR0849653}{article}{
      author={Bloch, Spencer},
      author={Kato, Kazuya},
       title={{$p$}-adic \'{e}tale cohomology},
        date={1986},
        ISSN={0073-8301,1618-1913},
     journal={Inst. Hautes \'{E}tudes Sci. Publ. Math.},
      number={63},
       pages={107\ndash 152},
         url={http://www.numdam.org/item?id=PMIHES_1986__63__107_0},
      review={\MR{849653}},
}

\bib{MR0723946}{article}{
      author={Brylinski, Jean-Luc},
       title={Th\'{e}orie du corps de classes de {K}ato et rev\^{e}tements ab\'{e}liens de surfaces},
        date={1983},
        ISSN={0373-0956,1777-5310},
     journal={Ann. Inst. Fourier (Grenoble)},
      volume={33},
      number={3},
       pages={23\ndash 38},
         url={http://www.numdam.org/item?id=AIF_1983__33_3_23_0},
      review={\MR{723946}},
}

\bib{MR4394741}{article}{
      author={Chipchakov, Ivan~D.},
       title={On the {B}rauer {$p$}-dimension of {H}enselian discrete valued fields of residual characteristic {$p > 0$}},
        date={2022},
        ISSN={0022-4049,1873-1376},
     journal={J. Pure Appl. Algebra},
      volume={226},
      number={8},
       pages={Paper No. 106948, 22},
         url={https://doi.org/10.1016/j.jpaa.2021.106948},
      review={\MR{4394741}},
}

\bib{MR4176776}{article}{
      author={Chapman, Adam},
      author={McKinnie, Kelly},
       title={Essential dimension, symbol length and {$p$}-rank},
        date={2020},
        ISSN={0008-4395,1496-4287},
     journal={Canad. Math. Bull.},
      volume={63},
      number={4},
       pages={882\ndash 890},
         url={https://doi.org/10.4153/s0008439520000119},
      review={\MR{4176776}},
}

\bib{MR1715874}{incollection}{
      author={Colliot-Th\'{e}l\`ene, Jean-Louis},
       title={Cohomologie galoisienne des corps valu\'{e}s discrets henseliens, d'apr\`es {K}. {K}ato et {S}. {B}loch},
        date={1999},
   booktitle={Algebraic {$K$}-theory and its applications ({T}rieste, 1997)},
   publisher={World Sci. Publ., River Edge, NJ},
       pages={120\ndash 163},
      review={\MR{1715874}},
}

\bib{MR2545681}{article}{
      author={Harbater, David},
      author={Hartmann, Julia},
      author={Krashen, Daniel},
       title={Applications of patching to quadratic forms and central simple algebras},
        date={2009},
        ISSN={0020-9910},
     journal={Invent. Math.},
      volume={178},
      number={2},
       pages={231\ndash 263},
         url={https://doi-org.proxy2.cl.msu.edu/10.1007/s00222-009-0195-5},
      review={\MR{2545681}},
}

\bib{MR565469}{article}{
      author={Illusie, Luc},
       title={Complexe de de {R}ham-{W}itt et cohomologie cristalline},
        date={1979},
        ISSN={0012-9593},
     journal={Ann. Sci. \'{E}cole Norm. Sup. (4)},
      volume={12},
      number={4},
       pages={501\ndash 661},
         url={http://www.numdam.org.proxy2.cl.msu.edu/item?id=ASENS_1979_4_12_4_501_0},
      review={\MR{565469}},
}

\bib{MR1386649}{incollection}{
      author={Izhboldin, O.~T.},
       title={On the cohomology groups of the field of rational functions},
        date={1996},
   booktitle={Mathematics in {S}t. {P}etersburg},
      series={Amer. Math. Soc. Transl. Ser. 2},
      volume={174},
   publisher={Amer. Math. Soc., Providence, RI},
       pages={21\ndash 44},
         url={https://doi.org/10.1090/trans2/174/03},
      review={\MR{1386649}},
}

\bib{MR550688}{article}{
      author={Kato, Kazuya},
       title={A generalization of local class field theory by using {$K$}-groups. {I}},
        date={1979},
        ISSN={0040-8980},
     journal={J. Fac. Sci. Univ. Tokyo Sect. IA Math.},
      volume={26},
      number={2},
       pages={303\ndash 376},
      review={\MR{550688}},
}

\bib{MR689394}{incollection}{
      author={Kato, Kazuya},
       title={Galois cohomology of complete discrete valuation fields},
        date={1982},
   booktitle={Algebraic {$K$}-theory, {P}art {II} ({O}berwolfach, 1980)},
      series={Lecture Notes in Math.},
      volume={967},
   publisher={Springer, Berlin-New York},
       pages={215\ndash 238},
      review={\MR{689394}},
}

\bib{MR991978}{incollection}{
      author={Kato, Kazuya},
       title={Swan conductors for characters of degree one in the imperfect residue field case},
        date={1989},
   booktitle={Algebraic {$K$}-theory and algebraic number theory ({H}onolulu, {HI}, 1987)},
      series={Contemp. Math.},
      volume={83},
   publisher={Amer. Math. Soc., Providence, RI},
       pages={101\ndash 131},
         url={https://doi-org.proxy2.cl.msu.edu/10.1090/conm/083/991978},
      review={\MR{991978}},
}

\bib{MR2388554}{article}{
      author={Lieblich, Max},
       title={Twisted sheaves and the period-index problem},
        date={2008},
        ISSN={0010-437X},
     journal={Compos. Math.},
      volume={144},
      number={1},
       pages={1\ndash 31},
         url={https://doi-org.proxy2.cl.msu.edu/10.1112/S0010437X07003144},
      review={\MR{2388554}},
}

\bib{MR3413868}{article}{
      author={Matzri, Eliyahu},
       title={Symbol length in the {B}rauer group of a field},
        date={2016},
        ISSN={0002-9947},
     journal={Trans. Amer. Math. Soc.},
      volume={368},
      number={1},
       pages={413\ndash 427},
         url={https://doi-org.proxy2.cl.msu.edu/10.1090/tran/6326},
      review={\MR{3413868}},
}

\bib{MR0675529}{article}{
      author={Merkur\cprime~ev, A.~S.},
      author={Suslin, A.~A.},
       title={{$K$}-cohomology of {S}everi-{B}rauer varieties and the norm residue homomorphism},
        date={1982},
        ISSN={0373-2436},
     journal={Izv. Akad. Nauk SSSR Ser. Mat.},
      volume={46},
      number={5},
       pages={1011\ndash 1046, 1135\ndash 1136},
      review={\MR{675529}},
}

\bib{MR3219517}{article}{
      author={Parimala, R.},
      author={Suresh, V.},
       title={Period-index and {$u$}-invariant questions for function fields over complete discretely valued fields},
        date={2014},
        ISSN={0020-9910,1432-1297},
     journal={Invent. Math.},
      volume={197},
      number={1},
       pages={215\ndash 235},
         url={https://doi.org/10.1007/s00222-013-0483-y},
      review={\MR{3219517}},
}

\bib{MR4411477}{article}{
      author={Totaro, Burt},
       title={Cohomological invariants in positive characteristic},
        date={2022},
        ISSN={1073-7928},
     journal={Int. Math. Res. Not. IMRN},
      number={9},
       pages={7152\ndash 7201},
         url={https://doi-org.proxy2.cl.msu.edu/10.1093/imrn/rnaa321},
      review={\MR{4411477}},
}

\bib{MR2031199}{article}{
      author={Voevodsky, Vladimir},
       title={Motivic cohomology with {${\bf Z}/2$}-coefficients},
        date={2003},
        ISSN={0073-8301,1618-1913},
     journal={Publ. Math. Inst. Hautes \'{E}tudes Sci.},
      number={98},
       pages={59\ndash 104},
         url={https://doi.org/10.1007/s10240-003-0010-6},
      review={\MR{2031199}},
}

\bib{MR1626092}{article}{
      author={Yamazaki, Takao},
       title={Reduced norm map of division algebras over complete discrete valuation fields of certain type},
        date={1998},
        ISSN={0010-437X},
     journal={Compositio Math.},
      volume={112},
      number={2},
       pages={127\ndash 145},
         url={https://doi-org.proxy2.cl.msu.edu/10.1023/A:1000439025718},
      review={\MR{1626092}},
}

\end{biblist}
\end{bibdiv}

\end{document}